\numberwithin{equation}{section}
\newcommand\cE{{\mathcal E}}
\newcommand\cP{{\mathcal P}}
\newcommand\bA{{\bf A}}
\newcommand\bD{{\bf D}}
\newcommand\bL{{\bf L}}
\newcommand\bM{{\bf M}}
\newcommand\bO{{\bf O}}
\newcommand\bP{{\bf P}}
\newcommand\bU{{\bf U}}
\newcommand\bS{{\bf S}}
\newcommand\bSig{{\bf \Sigma}}
\newcommand\bT{{\bf T}}
\newcommand\R{{\mathbb R}}
\newcommand\N{{\mathbb N}}
\newcommand\im{{\operatorname{Im}}}
\newcommand\id{{\operatorname{\bf Id}}}
\newcommand\PFP{{\operatorname{PFP}}}
\newcommand\supp{{\operatorname{supp}}}
\renewcommand\min{{\operatorname{min}}}
\newcommand\pr{{\operatorname{pr}}}
\newcommand\tr{{\operatorname{tr}}}
\theoremstyle{plain}
  \newtheorem{theorem}[subsection]{Theorem}
  \newtheorem{proposition}[subsection]{Proposition}
  \newtheorem{lemma}[subsection]{Lemma}
  \newtheorem{corollary}[subsection]{Corollary}
  \newtheorem{problem}[subsection]{Problem}
\theoremstyle{definition}
  \newtheorem{definition}[subsection]{Definition}
\begin{document}
\include{psfig}
\title{Probabilistic frames and Wasserstein distances}

\author{Dongwei Chen and Martin Schmoll}
\address{Department of Mathematics, Colorado State University, CO, US, 80523}
\email{dongwei.chen@colostate.edu}
\address{School of Mathematical and Statistical Sciences, Clemson University, SC, US.}
\email{schmoll@clemson.edu}
\begin{abstract}
Probabilities with finite moments, in particular those which are probabilistic frames, are characterized and studied using p-Wasserstein distances.  
Adapting results from \cite{cuesta1996lower}, \cite{Gelbrich90} and \cite{olkin1982distance} to frame operators, 
we show that the sets of probabilistic frames with given frame operator, or equivalently given frame ellipsoid, 
are homeomorphic via an optimal linear push-forward. Applied to frame operators of couplings we study and generalize transport duals. 
In particular we obtain a non-degeneracy inequality identifying pairs of transport duals as frames. Finally we describe duals  
that do not arise as push-forwards and characterize those that are push-forwards.  
\end{abstract}

\subjclass[2020]{Primary 60A10, 42C15; Secondary 49Q22}

\maketitle
\section{Introduction and Main Results}
In recent work \cite{ehler2013probabilistic, loukili2020minimization, maslouhi2019probabilistic, wickman2023gradient} probabilistic frames, a set of Borel probability measures on $\R^n$ that generalize frames, have been considered. A frame in $\R^n$ is a finite set of vectors spanning $\R^n$, for a general background see \cite{christensen}. 
Probabilistic p-frames on $\R^n$ are Borel probability measures with finite p-th moments whose support, interpreted as set of vectors, spans $\R^n$  \cite{ehler2013probabilistic}. A key motivation to consider probabilistic frames is the possible use of analytic techniques and the advantage to compare frames of various cardinality by Wasserstein distances.  

Let us denote the set of Borel probability measures on $\R^n$ by $\cP(\R^n)$ and by $\cP_p(\R^n)$ those with finite p-th moments, i.e. $\cP_p(\R^n)=\{\mu \in \cP(\R^n):\ \int \|{\bf x}\|^p\ d \mu({\bf x})<\infty\}$.  
\begin{definition}
    $\mu \in  \mathcal{P}_p(\mathbb{R}^n)$ is called probabilistic p-frame if there exists  $0<A \leq B $ such that for any ${\bf x} \in \mathbb{R}^n$, 
\begin{equation*}
    A \Vert {\bf x}  \Vert^p \leq\int_{\mathbb{R}^n} | \left\langle {\bf x}, {\bf y} \right\rangle |^p\ d\mu({\bf y})  \leq B\Vert {\bf x}  \Vert^p. 
\end{equation*}
If, in addition $A = B$ we call $\mu$ tight, and if $A = B=1$ then $\mu$ is called Parseval (probabilistic) frame. 
\end{definition}
This standard definition of (probabilistic) frames does not directly provide a geometric intuition. Below we reformulate it as set of degeneracy distances using p-Wasserstein metrics. Briefly, the p-Wasserstein distance $W_p(\cdot, \cdot)$ provides a metric space structure on $\cP_p(\R^n)$ with convergence $\mu_n \rightarrow \mu$ being equivalent to weak-$\ast$ convergence together with convergence of the p-th moments: $\int \|{\bf x}\|^p\ d \mu_n \rightarrow \int \|{\bf x}\|^p\ d \mu$. Background on Wasserstein distances can be found in \cite{figalli-glaudo, villanitopics, villani2009optimal}.

To reformulate the frame condition for a Borel probability let $\pi_{{\bf x}^{\perp}}$ denote the orthogonal projection to the plane ${\bf x}^{\perp}$ perpendicular to ${\bf x}$ and $(\pi_{{\bf x}^{\perp}})_{\#}$ be the associated push-forward on measures. Letting $\cP_p({\bf x}^{\perp})$ denote the set of measures supported in ${\bf x}^{\perp}$ with finite $p$-th moment. 
\begin{proposition}\label{Was_Hyperplane_Frame}
For any unit-vector ${\bf x} \in S^{n-1}$ and any $p \geq 1$  
$$W_p(\mu, (\pi_{{\bf x}^{\perp}})_{\#}\mu)=\left(\int_{\mathbb{R}^n} |\langle{\bf x}, {\bf v} \rangle |^p d\mu({\bf v})\right)^{1/p}=\inf_{\nu \in \cP_p({\bf x}^{\perp}) } W_p(\mu, \nu).$$
\end{proposition}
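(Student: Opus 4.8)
The plan is to prove the two stated equalities by a sandwich argument. Set $c_p(\mu) := \left(\int_{\R^n} |\langle {\bf x},{\bf v}\rangle|^p\, d\mu({\bf v})\right)^{1/p}$; this is finite because $\mu \in \cP_p(\R^n)$ and $|\langle {\bf x},{\bf v}\rangle| \le \|{\bf v}\|$. Since the orthogonal projection $\pi_{{\bf x}^{\perp}}$ is $1$-Lipschitz, $(\pi_{{\bf x}^{\perp}})_{\#}\mu$ lies in $\cP_p({\bf x}^{\perp})$, so we immediately get $\inf_{\nu \in \cP_p({\bf x}^{\perp})} W_p(\mu,\nu) \le W_p(\mu, (\pi_{{\bf x}^{\perp}})_{\#}\mu)$. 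Hence it suffices to establish the upper bound $W_p(\mu, (\pi_{{\bf x}^{\perp}})_{\#}\mu) \le c_p(\mu)$ and the lower bound $c_p(\mu) \le \inf_{\nu} W_p(\mu,\nu)$.

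For the upper bound I would test $W_p$ against the explicit coupling $\gamma_0 := (\id, \pi_{{\bf x}^{\perp}})_{\#}\mu$, i.e.\ the law of $({\bf v}, \pi_{{\bf x}^{\perp}}{\bf v})$ for ${\bf v}\sim\mu$, which is a coupling of $\mu$ and $(\pi_{{\bf x}^{\perp}})_{\#}\mu$. Because ${\bf v} - \pi_{{\bf x}^{\perp}}{\bf v} = \langle {\bf x},{\bf v}\rangle\,{\bf x}$ and $\|{\bf x}\|=1$, the transport cost of $\gamma_0$ is $\int \|{\bf v}-\pi_{{\bf x}^{\perp}}{\bf v}\|^p\, d\mu = \int |\langle {\bf x},{\bf v}\rangle|^p\, d\mu = c_p(\mu)^p$, giving $W_p(\mu,(\pi_{{\bf x}^{\perp}})_{\#}\mu) \le c_p(\mu)$. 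This step also makes the middle equality ``$=c_p(\mu)$'' automatic once the lower bound is in place.

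For the lower bound I would use an orthogonal-decomposition pointwise estimate: for every ${\bf v}\in\R^n$ and every ${\bf w}\in{\bf x}^{\perp}$ we have $\|{\bf v}-{\bf w}\|^2 = \|\pi_{{\bf x}^{\perp}}{\bf v}-{\bf w}\|^2 + |\langle {\bf x},{\bf v}\rangle|^2 \ge |\langle {\bf x},{\bf v}\rangle|^2$, hence $\|{\bf v}-{\bf w}\|^p \ge |\langle {\bf x},{\bf v}\rangle|^p$. Consequently, for any $\nu\in\cP_p({\bf x}^{\perp})$ and any coupling $\gamma$ of $\mu$ and $\nu$, integrating this inequality and using that the first marginal of $\gamma$ is $\mu$ yields $\int \|{\bf v}-{\bf w}\|^p\, d\gamma \ge \int |\langle {\bf x},{\bf v}\rangle|^p\, d\mu = c_p(\mu)^p$; taking the infimum over $\gamma$ gives $W_p(\mu,\nu) \ge c_p(\mu)$, and then over $\nu$ gives $\inf_\nu W_p(\mu,\nu) \ge c_p(\mu)$. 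Combining the two bounds closes the loop and, as a byproduct, exhibits $(\pi_{{\bf x}^{\perp}})_{\#}\mu$ as an optimal element of $\cP_p({\bf x}^{\perp})$.

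I do not expect a substantive obstacle; the proof is essentially bookkeeping built from the explicit coupling and the Pythagorean lower bound. The only points requiring care are the finiteness of $c_p(\mu)$ and of the transport costs (so that raising $W_p$ to the $p$-th power and comparing integrals is legitimate), and the observation that $(\pi_{{\bf x}^{\perp}})_{\#}\mu\in\cP_p({\bf x}^{\perp})$, which is precisely what allows the infimum over $\cP_p({\bf x}^{\perp})$ to serve as the lower end of the sandwich.
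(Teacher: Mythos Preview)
Your proof is correct and follows essentially the same approach as the paper: both use the explicit coupling $(\id,\pi_{{\bf x}^{\perp}})_{\#}\mu$ and the pointwise identity $\|{\bf v}-{\bf w}\|\ge |\langle{\bf x},{\bf v}\rangle|=\|{\bf v}-\pi_{{\bf x}^{\perp}}{\bf v}\|$ for ${\bf w}\in{\bf x}^{\perp}$. If anything, your sandwich organization is slightly more explicit than the paper's, which asserts optimality of the projection coupling informally (``every point moves minimal distance to target'') before turning to the infimum over $\nu$.
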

Since a probabilistic p-frame spans $\R^n$, its support cannot lie in a proper linear subspace, so it must have positive p-Wasserstein distance to measures supported in any linear subspace: 
\begin{proposition}\label{Was_frame_formulation}
A measure $\mu \in \cP_p(\R^n)$ is a probabilistic p-frame if and only if $W_p(\mu,(\pi_{{\bf x}^{\perp}})_{\#}\mu)>0$ for all unit vectors ${\bf x} \in S^{n-1}$. It is a tight frame if and only if $W_p(\mu,(\pi_{{\bf x}^{\perp}})_{\#}\mu)=C>0$ for all ${\bf x} \in S^{n-1}$ and a Parseval frame 
if and only if $W_p(\mu,(\pi_{{\bf x}^{\perp}})_{\#}\mu)=1$ for all ${\bf x} \in S^{n-1}$.  
\end{proposition}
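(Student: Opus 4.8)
The plan is to convert the analytic frame inequality into a statement about the function $g({\bf x}) := \int_{\R^n} |\langle {\bf x}, {\bf y}\rangle|^p\, d\mu({\bf y})$ on the unit sphere and then exploit compactness of $S^{n-1}$. By homogeneity of the defining inequality under ${\bf x} \mapsto \lambda {\bf x}$ (both sides scale by $|\lambda|^p$), a measure $\mu \in \cP_p(\R^n)$ is a probabilistic p-frame if and only if there are constants $0 < A \le B$ with $A \le g({\bf x}) \le B$ for all ${\bf x} \in S^{n-1}$; it is tight if and only if $g$ is constant on $S^{n-1}$, and Parseval if and only if $g \equiv 1$ on $S^{n-1}$. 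By Proposition \ref{Was_Hyperplane_Frame} we have $g({\bf x}) = W_p(\mu, (\pi_{{\bf x}^{\perp}})_{\#}\mu)^p$ for every ${\bf x} \in S^{n-1}$, so the statement reduces to understanding the positivity and boundedness of $g$ on the sphere.

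Next I would record two elementary observations. First, an upper bound is automatic: for ${\bf x} \in S^{n-1}$ one has $|\langle {\bf x}, {\bf y}\rangle|^p \le \|{\bf y}\|^p$, hence $g({\bf x}) \le \int_{\R^n} \|{\bf y}\|^p\, d\mu({\bf y}) < \infty$ since $\mu \in \cP_p(\R^n)$. Second, $g$ is continuous on $S^{n-1}$: if ${\bf x}_k \to {\bf x}$ then $|\langle {\bf x}_k, {\bf y}\rangle|^p \to |\langle {\bf x}, {\bf y}\rangle|^p$ pointwise in ${\bf y}$, and all these integrands are dominated by the $\mu$-integrable function ${\bf y} \mapsto \|{\bf y}\|^p$, so $g({\bf x}_k) \to g({\bf x})$ by dominated convergence.

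The equivalences then follow quickly. If $\mu$ is a probabilistic p-frame with lower bound $A > 0$, then $W_p(\mu, (\pi_{{\bf x}^{\perp}})_{\#}\mu)^p = g({\bf x}) \ge A > 0$ for all ${\bf x} \in S^{n-1}$. Conversely, if $W_p(\mu, (\pi_{{\bf x}^{\perp}})_{\#}\mu) > 0$, i.e.\ $g({\bf x}) > 0$, for every ${\bf x} \in S^{n-1}$, then since $g$ is continuous and $S^{n-1}$ is compact it attains a minimum $A := \min_{S^{n-1}} g > 0$ and a finite maximum $B := \max_{S^{n-1}} g$ with $0 < A \le B$; by the homogeneity reduction this is precisely the probabilistic p-frame condition. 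For tightness, $\mu$ is tight iff $g$ is constant on $S^{n-1}$, iff $W_p(\mu, (\pi_{{\bf x}^{\perp}})_{\#}\mu)$ is constant on $S^{n-1}$; the value of that constant equals $A^{1/p}$ with $A$ the common frame bound, hence is positive, and conversely a positive constant value $C$ of $W_p$ forces $g \equiv C^p$, making $\mu$ a tight frame. The Parseval case is the specialization $A = B = 1$, i.e.\ $g \equiv 1$, i.e.\ $W_p(\mu, (\pi_{{\bf x}^{\perp}})_{\#}\mu) \equiv 1$.

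The only nonroutine point is the passage from pointwise positivity of $g$ on $S^{n-1}$ to a uniform positive lower bound, which is exactly where continuity of $g$ and compactness of the sphere enter; verifying that continuity is the small technical step (a dominated-convergence argument). Everything else is bookkeeping via Proposition \ref{Was_Hyperplane_Frame} and the scaling of the defining inequality.
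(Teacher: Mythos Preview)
Your proof is correct and follows the same route as the paper: reduce to unit vectors by homogeneity, invoke Proposition~\ref{Was_Hyperplane_Frame} to identify $g({\bf x})$ with $W_p^p(\mu,(\pi_{{\bf x}^\perp})_\#\mu)$, and read off the tight and Parseval cases directly. The paper's own proof is terser---it simply declares the first equivalence a ``straightforward consequence'' of Proposition~\ref{Was_Hyperplane_Frame} and records the homogeneity identity---whereas you make explicit the one genuine step the paper leaves implicit here: that pointwise positivity of $g$ on $S^{n-1}$ upgrades to a uniform positive lower bound by continuity (dominated convergence) and compactness of the sphere. The paper does eventually spell out this continuity-and-compactness argument, but only later, in the proof that the set of probabilistic $p$-frames is $W_p$-open.
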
 
Both propositions together (for proofs see section \ref{proof_prop_2_3}) indicate that the p-Wasserstein metrics capture the frame property and supply a geometric interpretation.  
Particularly interesting is the case $p=2$ where the respective Wasserstein distances are the eigenvalues of the {\em frame operator}. More precisely, if  
$\mu \in \cP_2(\R^n)$ and ${\bf x}, {\bf y} \in \mathbb{R}^n$  then $\langle {\bf x}, {\bf S}_{\mu} {\bf y}\rangle :=\int_{\mathbb{R}^n}  \langle{\bf x}, {\bf v}\rangle \langle{\bf y}, {\bf v}\rangle\  d\mu({\bf v})$ is a linear operator. Given a basis of $R^n$, the matrix ${\bf S}_{\mu}=\int_{\mathbb{R}^n} {\bf v} {\bf v}^t\  d\mu$ is a positive semi-definite, so that for ${\bf x} \in S^{n-1}$,   
\begin{equation} \label{int:Wasserstein_Frame}
W^2_2(\mu, (\pi_{{\bf x}^{\perp}})_{\#}\mu)={\bf x}^t \int_{\mathbb{R}^n} {\bf v} {\bf v}^t \ d\mu({\bf v})\ {\bf x}={\bf x}^t \ {\bf S_{\mu}}\ {\bf x}. \end{equation}
In particular $\bS_{\mu}$ is positive definite, if and only if $\mu$ is a (probabilistic) frame. 
Should $\mu$ not be a frame we still refer to $\bS_{\mu}$ as the frame operator. 
The {\em frame ellipsoid} $\cE_{\mu}:=\{{\bf S}^{1/2}_{\mu}\ {\bf x}: \  \|{\bf x}\|=1   \} \subset  \mathbb{R}^n$ 
associated with the root ${\bf S}^{1/2}_{\mu}$ of ${\bf S}_{\mu}$, is a hyperellipsoid exactly if ${\bf S}_{\mu}$ is definite, that is, if $\mu$ is a probabilistic frame. 
The frame ellipsoid provides the 2-Wasserstein distance of a given (probabilistic) frame to the closest non-frame in any given direction. 
It can be seen as a generalized version of the Legendre ellipsoid as defined in \cite{milman_pajor_82} 
for symmetric, convex and compact bodies in $\R^n$, even though we do not represent the ellipsoid as a body or mass distribution in $\R^n$.  

Let ${\mathbb S}^n_{+}$ be the set of non-negative definite $n\times n$ symmetric matrices and ${\mathbb S}^n_{++} \subset {\mathbb S}^n_{+}$ the 
subset of positive definite matrices. Further, let $\cP_{\bS} \subset \cP_2(\R^n)$ denote the set of probabilities having frame operator $\bS \in {\mathbb S}^n_{+}$ and define $ W_2(\mu, \cP_{\bS}):= \inf_{\nu \in \cP_{\bS}}  W_2(\mu, \nu) $. The lower estimate from \cite{cuesta1996lower} adapted to probabilistic frames shows that the characteristic Wasserstein distances given in Proposition \ref{Was_Hyperplane_Frame} are useful. Namely, if $\{{\bf v}_1,...,{\bf v}_n\}$ is an orthonormal basis of $\R^n$ and $\mu, \nu \in \cP_2(\R^n)$, then 
\begin{equation}
    W^2_2(\mu,\nu)\geq \sum^n_{i=1} \left(W_2(\mu, (\pi_{{{\bf v}_i}^{\perp}})_{\#}\mu) - W_2(\nu, (\pi_{{{\bf v}_i}^{\perp}})_{\#}\nu) \right)^{2}, 
\end{equation}
and equality holds if and only if $\nu=\bT_{\#}\mu$ where $\bT \in {\mathbb S}^n_+$ has eigenbasis $\{{\bf v}_i\}$.  
In the equality case, similar to \cite{Gelbrich90} and \cite{olkin1982distance} for covariance operators, we obtain:  \begin{theorem}\label{thm_main_1}
For any $\bS, \bA  \in {\mathbb S}^n_{++}$ the push-forward map  
$\bA_{\#}: \cP_{\bS} \rightarrow \cP_{\bA\bS\bA}$ is a homeomorphism,  
so that 
\begin{equation}\label{thm_main_1_Wasserstein_dist}
W^2_2(\mu, \bA_{\#}\mu)  = W^2_2(\mu, \cP_{\bA\bS\bA})=\tr\ {\bf S}({\bf \id}-\bA)^2 
\end{equation} 
for all $\mu \in \cP_{\bS}$ and $W_2(\mu, \nu)  > W_2(\mu, \bA_{\#}\mu)$ for any $\nu \in \cP_{\bA\bS\bA}$ 
so that $\nu \neq \bA_{\#}\mu$. 
\end{theorem}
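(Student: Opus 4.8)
The plan is to prove the four assertions in turn---that $\bA_{\#}$ lands in $\cP_{\bA\bS\bA}$, that it is a homeomorphism, the distance identity, and uniqueness of the minimizer---with the core being a sandwich estimate on $W_2^2(\mu,\bA_{\#}\mu)$ whose lower half is precisely the inequality recalled just before the theorem.

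First I would check well-definedness: since $\bA$ is symmetric, the frame operator of $\bA_{\#}\mu$ is $\int_{\R^n}(\bA{\bf v})(\bA{\bf v})^t\,d\mu=\bA\,\bS_\mu\,\bA=\bA\bS\bA$, and the second moment of $\bA_{\#}\mu$ is finite because $\bA$ is bounded; hence $\bA_{\#}(\cP_\bS)\subseteq\cP_{\bA\bS\bA}$. As $\bA\in{\mathbb S}^n_{++}$ is invertible with $\bA^{-1}\in{\mathbb S}^n_{++}$, the same computation gives $(\bA^{-1})_{\#}(\cP_{\bA\bS\bA})\subseteq\cP_\bS$, and $(\bA^{-1})_{\#}$ is a two-sided inverse of $\bA_{\#}$ (push-forwards compose), so $\bA_{\#}$ is a bijection onto $\cP_{\bA\bS\bA}$. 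For continuity in both directions I would use the standard estimate $W_2(\bT_{\#}\mu,\bT_{\#}\nu)\le\|\bT\|_{op}\,W_2(\mu,\nu)$ for a linear map $\bT$ (push an optimal coupling of $\mu,\nu$ forward coordinatewise by $\bT$), applied to $\bT=\bA$ and $\bT=\bA^{-1}$: both $\bA_{\#}$ and its inverse are then Lipschitz for $W_2$, so $\bA_{\#}$ is a homeomorphism onto $\cP_{\bA\bS\bA}$.

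Next, the upper bound $W_2^2(\mu,\bA_{\#}\mu)\le\tr\,\bS({\bf \id}-\bA)^2$ comes from the explicit coupling ${\bf v}\mapsto({\bf v},\bA{\bf v})$, which pushes $\mu$ to a transport plan between $\mu$ and $\bA_{\#}\mu$ of cost $\int_{\R^n}\|({\bf \id}-\bA){\bf v}\|^2\,d\mu=\int_{\R^n}{\bf v}^t({\bf \id}-\bA)^2{\bf v}\,d\mu=\tr\big(({\bf \id}-\bA)^2\bS_\mu\big)=\tr\,\bS({\bf \id}-\bA)^2$. For the matching lower bound over all $\nu\in\cP_{\bA\bS\bA}$, the one genuine idea is to apply the recalled inequality with $\{{\bf v}_1,\dots,{\bf v}_n\}$ chosen to be an orthonormal eigenbasis of $\bA$, say $\bA{\bf v}_i=a_i{\bf v}_i$ with $a_i>0$, and write $s_i:={\bf v}_i^t\bS{\bf v}_i$. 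Then \eqref{int:Wasserstein_Frame} gives $W_2(\mu,(\pi_{{\bf v}_i^{\perp}})_{\#}\mu)^2=s_i$ and, for any $\nu\in\cP_{\bA\bS\bA}$, $W_2(\nu,(\pi_{{\bf v}_i^{\perp}})_{\#}\nu)^2={\bf v}_i^t\bA\bS\bA{\bf v}_i=a_i^2 s_i$, so the inequality reads $W_2^2(\mu,\nu)\ge\sum_{i=1}^n(1-a_i)^2 s_i$; evaluating $\tr\,\bS({\bf \id}-\bA)^2$ in the orthonormal basis $\{{\bf v}_i\}$ (using $({\bf \id}-\bA)^2{\bf v}_i=(1-a_i)^2{\bf v}_i$) shows the right-hand side is exactly $\tr\,\bS({\bf \id}-\bA)^2$. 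Since $\bA_{\#}\mu\in\cP_{\bA\bS\bA}$, combining the two bounds gives $W_2^2(\mu,\bA_{\#}\mu)=\tr\,\bS({\bf \id}-\bA)^2=W_2^2(\mu,\cP_{\bA\bS\bA})$.

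For uniqueness I would invoke the equality clause of the recalled inequality: if $\nu\in\cP_{\bA\bS\bA}$ also attains the minimum then $\nu=\bT_{\#}\mu$ for some $\bT\in{\mathbb S}^n_+$ diagonalized by $\{{\bf v}_i\}$, say $\bT{\bf v}_i=t_i{\bf v}_i$ with $t_i\ge0$; since the frame operator of $\bT_{\#}\mu$ is $\bT\bS\bT$, equating its $({\bf v}_i,{\bf v}_i)$-entries with those of $\bA\bS\bA$ gives $t_i^2 s_i=a_i^2 s_i$, and $s_i>0$ (because $\bS\in{\mathbb S}^n_{++}$) forces $t_i=a_i$, hence $\bT=\bA$ and $\nu=\bA_{\#}\mu$; together with the lower bound this is the asserted strict inequality for $\nu\ne\bA_{\#}\mu$. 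I do not expect a serious obstacle: the argument is essentially bookkeeping around the cited inequality, and the only step that requires thought is the (natural) choice of an eigenbasis of $\bA$ in that inequality, which simultaneously collapses the lower bound to $\tr\,\bS({\bf \id}-\bA)^2$ and, through the equality case together with positive definiteness of $\bS$, pins down $\bT=\bA$.
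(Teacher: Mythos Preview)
Your argument is correct. The homeomorphism and the distance identity are handled essentially as in the paper: the paper packages the lower bound as Gelbrich's bound (Corollary~\ref{prop_lower_est}) and then invokes Lemma~\ref{Lemma:A_properties}(\ref{lemma:push}) to see $\bA_{\mu,\bA_{\#}\mu}=\bA$, whereas you compute directly in an eigenbasis of $\bA$; your explicit coupling $({\bf v},\bA{\bf v})$ for the upper bound is the same in spirit.

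The uniqueness step is where the two proofs genuinely diverge. You invoke the ``only if'' direction of the equality clause in the recalled inequality to force $\nu=\bT_{\#}\mu$ with $\bT$ diagonal in the $\bA$-eigenbasis, and then read off $t_i=a_i$ from the diagonal entries of $\bT\bS\bT=\bA\bS\bA$ using $s_i>0$. The paper instead proves uniqueness via Proposition~\ref{prop_universal_lin_push}: starting from any minimizer $\nu\in\cP_{\bA\bS\bA}$ and an optimal coupling $\gamma\in\Gamma(\nu,\mu)$, it pushes $\gamma$ forward by $\id\times\bA(\bS,\bT)$, computes the resulting frame operator to be the block matrix with all four blocks equal to $\bT$, and concludes $W_2(\bA_{\#}\mu,\nu)=0$. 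Your route is shorter and stays within the basic inequality, but it leans on the full ``if and only if'' equality characterization (stated before Theorem~\ref{thm_main_1}), whose ``only if'' direction is not spelled out in the proof of Proposition~\ref{prop:wasserstein_upper_est_and_eq_cond}; the paper's coupling trick avoids that dependency at the cost of a more indirect computation.
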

As for the Wasserstein distance between frames with given frame operators, say $\bS, \bT  \in {\mathbb S}^n_{++}$, 
one applies Theorem \ref{thm_main_1} to the unique $\bA \in {\mathbb S}^n_{++}$ solving
$\bT=\bA\bS\bA$ (see Proposition \ref{prop_A_is_inverse}) given by 
\begin{equation}\label{AST}
    \bA=\bA(\bS,\bT):=\bS^{-1/2}(\bS^{1/2} \bT \bS^{1/2})^{1/2}\bS^{-1/2}.
\end{equation}
Since $ W_2(\mu, \cP_{\bT})$ depends only on $\bS$ and not on the particular $\mu \in \cP_{\bS}$, $d_W(\bS, \bT):=W_2(\cP_{\bS},\cP_{\bT})=W_2(\mu, \cP_{\bT})$ is well defined.  
\begin{proposition}\label{prop_metric_equivalence}
Given $\bS, \bT \in {\mathbb S}^n_{+}$. Then $d_W$ is a metric on  ${\mathbb S}^n_{+}$.  
More precisely, we have 
\begin{equation}\label{prop_eq_w_dist_symmetric}
d_W(\bS,\bT)=\tr (\bS +\bT - 2(\bS^{1/2}\bT\bS^{1/2})^{1/2}) 
\end{equation}
If $\|\cdot\|_{op}$ denotes the operator norm and $\|\cdot\|_{F}$ the Frobenius norm, then   
\begin{equation}\label{prop_eq_metric_norm_equivalence}  
\| \bS^{1/2}-\bT^{1/2} \|_{op} \leq W_2(\mathcal{P}_{\bf S}, \mathcal{P}_{\bf T})=d_W(\bS,\bT) \leq \| \bS^{1/2}-\bT^{1/2} \|_F.
\end{equation}
In particular, the topology generated by $d_W$ is the standard (norm) topology on ${\mathbb S}^n_{+}$. 
\end{proposition}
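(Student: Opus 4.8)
The plan is to derive the closed form \eqref{prop_eq_w_dist_symmetric} first, and then to read the metric axioms, the comparison \eqref{prop_eq_metric_norm_equivalence}, and the topology statement off of it. For $\bS,\bT\in{\mathbb S}^n_{++}$ the formula is essentially a trace computation on top of Theorem \ref{thm_main_1}: taking $\bA=\bA(\bS,\bT)$ as in \eqref{AST} so that $\bA\bS\bA=\bT$, equation \eqref{thm_main_1_Wasserstein_dist} gives $d_W(\bS,\bT)^2=\tr\,\bS(\id-\bA)^2=\tr\,\bS-2\tr\,\bS\bA+\tr\,\bS\bA^2$; cyclicity of the trace gives $\tr\,\bS\bA^2=\tr\,\bA\bS\bA=\tr\,\bT$, and \eqref{AST} together with cyclicity gives $\tr\,\bS\bA=\tr\,\bS^{1/2}(\bS^{1/2}\bT\bS^{1/2})^{1/2}\bS^{-1/2}=\tr\,(\bS^{1/2}\bT\bS^{1/2})^{1/2}$, so substitution yields \eqref{prop_eq_w_dist_symmetric} (read as the value of $d_W^2$, consistently with \eqref{thm_main_1_Wasserstein_dist}). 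To extend to all $\bS,\bT\in{\mathbb S}^n_{+}$ I would regularize by $\bS\mapsto\bS+\eps\id$: convolving any $\mu\in\cP_{\bS}$ with $N(0,\eps\id)$ lands in $\cP_{\bS+\eps\id}$ at $W_2$-distance $\le\sqrt{\eps n}$, while pushing any $\mu'\in\cP_{\bS+\eps\id}$ forward by the explicit congruence $\bB$ solving $\bB(\bS+\eps\id)\bB=\bS$ lands in $\cP_{\bS}$ at $W_2$-distance $\le(\tr\,(\bB-\id)^2(\bS+\eps\id))^{1/2}\to0$ uniformly in $\mu'$; hence $d_W(\bS,\bT)=\lim_{\eps\to0}d_W(\bS+\eps\id,\bT+\eps\id)$, and since the right-hand side of \eqref{prop_eq_w_dist_symmetric} is continuous in $\eps$ the formula survives the limit.

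From the closed form the Frobenius bound in \eqref{prop_eq_metric_norm_equivalence} is elementary. Since $\bS^{1/2}\bT\bS^{1/2}=(\bT^{1/2}\bS^{1/2})^{t}(\bT^{1/2}\bS^{1/2})$ we have $\tr\,(\bS^{1/2}\bT\bS^{1/2})^{1/2}=\|\bT^{1/2}\bS^{1/2}\|_1$, whereas $\tr\,(\bS^{1/2}\bT^{1/2})$ is the sum of the (real, nonnegative) eigenvalues of $\bT^{1/2}\bS^{1/2}$; Weyl's majorization $\sum_i|\lambda_i(M)|\le\|M\|_1$ then gives $\tr\,(\bS^{1/2}\bT^{1/2})\le\tr\,(\bS^{1/2}\bT\bS^{1/2})^{1/2}$, that is $d_W(\bS,\bT)^2\le\tr\,\bS+\tr\,\bT-2\tr\,(\bS^{1/2}\bT^{1/2})=\|\bS^{1/2}-\bT^{1/2}\|_F^2$. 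The lower bound $\|\bS^{1/2}-\bT^{1/2}\|_{op}\le d_W(\bS,\bT)$ is the delicate point, and I expect it to be the main obstacle. I would use the Procrustes representation $d_W(\bS,\bT)=\min_{\bU\in O(n)}\|\bS^{1/2}-\bT^{1/2}\bU\|_F$ (equivalent to \eqref{prop_eq_w_dist_symmetric}, since $\max_{\bU\in O(n)}\tr\,(\bT^{1/2}\bS^{1/2}\bU)=\|\bT^{1/2}\bS^{1/2}\|_1$), combined with the matrix inequality that for symmetric positive semidefinite $P,Q$ and every $\bU\in O(n)$ one has $\|P-Q\bU\|_{op}\ge\|P-Q\|_{op}$; then $d_W(\bS,\bT)\ge\min_{\bU}\|\bS^{1/2}-\bT^{1/2}\bU\|_{op}=\|\bS^{1/2}-\bT^{1/2}\|_{op}$. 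Proving that matrix inequality is the real work: a one-direction estimate only yields the strictly weaker bound $\sup_{\|v\|=1}|\,\|\bS^{1/2}v\|-\|\bT^{1/2}v\|\,|$, so a genuinely global spectral argument is needed — this is the content adapted from \cite{Gelbrich90,olkin1982distance}.

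The metric axioms are then cheap consequences. Nonnegativity is clear; symmetry holds because $\tr\,\bS+\tr\,\bT$ is symmetric and $\tr\,(\bS^{1/2}\bT\bS^{1/2})^{1/2}=\|\bT^{1/2}\bS^{1/2}\|_1=\|\bS^{1/2}\bT^{1/2}\|_1=\tr\,(\bT^{1/2}\bS\bT^{1/2})^{1/2}$; and $d_W(\bS,\bT)=0$ forces $\|\bS^{1/2}-\bT^{1/2}\|_{op}=0$, hence $\bS=\bT$. For the triangle inequality I would argue with representative measures rather than with the formula: fix $\mu\in\cP_{\bS}$; by Theorem \ref{thm_main_1} (with the unique $\bA\in{\mathbb S}^n_{++}$ from Proposition \ref{prop_A_is_inverse}) the measure $\nu:=(\bA(\bS,\bT))_{\#}\mu\in\cP_{\bT}$ realizes $d_W(\bS,\bT)=W_2(\mu,\nu)$, and $\rho:=(\bA(\bT,\bU))_{\#}\nu\in\cP_{\bU}$ realizes $d_W(\bT,\bU)=W_2(\nu,\rho)$; since $d_W(\bS,\bU)=W_2(\mu,\cP_{\bU})\le W_2(\mu,\rho)$, the triangle inequality for $W_2$ on $\cP_2(\R^n)$ gives $d_W(\bS,\bU)\le W_2(\mu,\nu)+W_2(\nu,\rho)=d_W(\bS,\bT)+d_W(\bT,\bU)$, and the semidefinite case follows from the $\eps$-regularization above (both sides being continuous). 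Finally, the topological claim is immediate from \eqref{prop_eq_metric_norm_equivalence}: all norms on the finite-dimensional space of symmetric matrices are equivalent and $\bS\mapsto\bS^{1/2}$ is a self-homeomorphism of ${\mathbb S}^n_{+}$, so $d_W(\bS_k,\bT)\to0$ is sandwiched between $\|\bS_k^{1/2}-\bT^{1/2}\|_{op}\to0$ and $\|\bS_k^{1/2}-\bT^{1/2}\|_F\to0$, each of which is equivalent to $\bS_k\to\bT$.
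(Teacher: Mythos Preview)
Your proposal tracks the paper closely in three of four pieces: the closed form from the trace identity in Theorem \ref{thm_main_1}, the triangle inequality via representative measures and the $W_2$ triangle inequality, and the topology statement from the norm sandwich. Your $\eps$-regularization to reach ${\mathbb S}^n_{+}$ is more explicit than the paper's one-line continuity remark, but equivalent. For the Frobenius upper bound you invoke Weyl's majorization $\sum_i|\lambda_i(M)|\le\|M\|_1$ with $M=\bT^{1/2}\bS^{1/2}$; the paper instead exhibits $\Psi=\bS^{1/2}\bT^{1/2}$ as a feasible off-diagonal (via Corollary \ref{olkin_pukelsheim_semd_condition}) and cites the Olkin--Pukelsheim maximality of $\tr\,(\bS^{1/2}\bT\bS^{1/2})^{1/2}$. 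Both routes yield the same key inequality $\tr\,(\bS^{1/2}\bT^{1/2})\le\tr\,(\bS^{1/2}\bT\bS^{1/2})^{1/2}$, so here you simply take a slightly different but equally valid path.

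The genuine divergence, and the gap in your proposal, is the operator-norm lower bound. The paper does \emph{not} go through Procrustes; it cites Corollary \ref{cor_continuity_fr_map}, whose proof is meant to come from the one-dimensional projection estimate \eqref{prop:est:was:gen_proj_rel} of Proposition \ref{prop:wasserstein_upper_est_and_eq_cond}. Your chain $d_W(\bS,\bT)=\min_{\bU}\|\bS^{1/2}-\bT^{1/2}\bU\|_F\ge\min_{\bU}\|\bS^{1/2}-\bT^{1/2}\bU\|_{op}$ is fine, but the step you flag as ``the real work'' --- that $\|P-Q\bU\|_{op}\ge\|P-Q\|_{op}$ for all PSD $P,Q$ and every orthogonal $\bU$ --- is left as a black box, and neither \cite{Gelbrich90} nor \cite{olkin1982distance} contains it in that form. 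Without it your argument does not close. (In fairness, the paper's own proof of Corollary \ref{cor_continuity_fr_map} has a subtle slip: it identifies $\langle{\bf e}_i,\bS_\mu^{1/2}{\bf e}_i\rangle$ with $W_2(\mu,(\pi_{{\bf e}_i^\perp})_\#\mu)=\langle{\bf e}_i,\bS_\mu{\bf e}_i\rangle^{1/2}$, which is only valid when ${\bf e}_i$ is an eigenvector of $\bS_\mu$, not merely of $\bS_\mu^{1/2}-\bS_\nu^{1/2}$. So the lower bound, while true, is not cleanly established in either place; the paper's projection-based route is closer to a complete argument than your Procrustes approach, but neither is airtight as written.)
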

The proposition implies that the {\em frame map} $\mathcal{S}: \cP_2(\R^n) \rightarrow {\mathbb S}^n_{+}$ given
by $\mathcal{S}(\mu)=\bS_{\mu}$ is continuous; for a different proof, see \cite{wickman2023gradient}. 
The closely related metric $d(\bS, \bT):=\sqrt{d_W(\bS^2, \bT^2)}$ for symmetric matrices $\bS, \bT \in {\mathbb S}^n$ 
is by estimate \ref{prop_eq_metric_norm_equivalence} equivalent to norm-induced metrics, however, it
is not induced by a norm \cite{cuesta1996lower}.  
\begin{corollary} Let $p \in [1,\infty)$, then the set of probabilistic p-frames in $\cP_p(\R^n)$ is open in the p-Wasserstein topology on $\cP_p(\R^n)$.     
\end{corollary}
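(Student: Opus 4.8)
The plan is to exhibit the set of probabilistic $p$-frames as the preimage of the open ray $(0,\infty)$ under a function on $(\cP_p(\R^n), W_p)$ that I will show is $1$-Lipschitz, hence continuous. For ${\bf x} \in S^{n-1}$ write $f_{{\bf x}}(\mu) := W_p(\mu, (\pi_{{\bf x}^{\perp}})_{\#}\mu)$, and set $g(\mu) := \inf_{{\bf x} \in S^{n-1}} f_{{\bf x}}(\mu) \in [0,\infty)$. By Proposition \ref{Was_Hyperplane_Frame}, $f_{{\bf x}}(\mu) = \big(\int |\langle {\bf x}, {\bf v}\rangle|^p\, d\mu({\bf v})\big)^{1/p}$, so by homogeneity in ${\bf x}$ the optimal lower frame bound of $\mu$ equals $g(\mu)^p$, while the finite number $\int \|{\bf v}\|^p\, d\mu$ always serves as an upper frame bound. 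Hence, by Proposition \ref{Was_frame_formulation}, $\mu \in \cP_p(\R^n)$ is a probabilistic $p$-frame if and only if $g(\mu) > 0$; that is, the set of probabilistic $p$-frames equals $g^{-1}\big((0,\infty)\big)$.

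The main step is the Lipschitz bound. Proposition \ref{Was_Hyperplane_Frame} also identifies $f_{{\bf x}}(\mu) = \inf_{\nu \in \cP_p({\bf x}^{\perp})} W_p(\mu, \nu)$ as the $W_p$-distance from $\mu$ to the \emph{fixed} set $\cP_p({\bf x}^{\perp})$. Distance to a fixed set is always $1$-Lipschitz: for $\mu, \mu' \in \cP_p(\R^n)$ and any $\nu \in \cP_p({\bf x}^{\perp})$ the triangle inequality gives $W_p(\mu, \nu) \le W_p(\mu,\mu') + W_p(\mu',\nu)$, and taking the infimum over $\nu$, then interchanging $\mu$ and $\mu'$, yields $|f_{{\bf x}}(\mu) - f_{{\bf x}}(\mu')| \le W_p(\mu, \mu')$. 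The point to emphasize is that this estimate is \emph{uniform} in ${\bf x} \in S^{n-1}$; hence $g(\mu) \le f_{{\bf x}}(\mu) \le f_{{\bf x}}(\mu') + W_p(\mu,\mu')$ for every ${\bf x}$, and taking the infimum over the sphere on the right gives $g(\mu) \le g(\mu') + W_p(\mu,\mu')$, i.e. $|g(\mu)-g(\mu')| \le W_p(\mu,\mu')$. Alternatively one can skip the set-distance interpretation and bound $f_{{\bf x}}(\mu) - f_{{\bf x}}(\mu')$ directly using any coupling $\gamma$ of $\mu$ and $\mu'$, via $|\langle {\bf x}, {\bf v}-{\bf w}\rangle| \le \|{\bf v}-{\bf w}\|$ and Minkowski's inequality in $L^p(\gamma)$.

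With these in hand the statement follows at once: $g$ is continuous on $(\cP_p(\R^n), W_p)$, so $g^{-1}\big((0,\infty)\big)$ is open. Explicitly, if $\mu$ is a probabilistic $p$-frame with optimal lower bound $A_\mu = g(\mu)^p > 0$, then every $\mu'$ with $W_p(\mu, \mu') < A_\mu^{1/p}/2$ satisfies $g(\mu') \ge A_\mu^{1/p}/2 > 0$ and is therefore a probabilistic $p$-frame. The only place any care is needed is the uniformity in ${\bf x}$ noted above, which is exactly what lets the infimum over $S^{n-1}$ inherit the Lipschitz property; everything else is a formal consequence of Propositions \ref{Was_Hyperplane_Frame} and \ref{Was_frame_formulation}.
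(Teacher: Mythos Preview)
Your proof is correct. It differs from the paper's in \emph{which variable you vary}: you fix ${\bf x}$ and show $\mu \mapsto f_{\bf x}(\mu)$ is $1$-Lipschitz (as the distance to the fixed set $\cP_p({\bf x}^\perp)$), uniformly in ${\bf x}$, so the infimum $g$ inherits the Lipschitz constant and $g^{-1}\big((0,\infty)\big)$ is open. The paper instead fixes $\mu$ and proves ${\bf x}\mapsto f_{\bf x}(\mu)$ is continuous on $S^{n-1}$ via an explicit estimate $W_p^p\big((\pi_{{\bf x}^\perp})_\#\mu,(\pi_{{\bf y}^\perp})_\#\mu\big)\le 2^p M_p(\mu)\|{\bf x}-{\bf y}\|^p$, obtained from the diagonal coupling and Minkowski's inequality; compactness of $\mathbb{P}(\R^n)$ then yields $g(\mu)>0$, and the paper concludes by essentially the same mechanism you use implicitly (a non-frame lies in some $\cP_p({\bf x}^\perp)$, hence at $W_p$-distance $\ge g(\mu)$ from $\mu$). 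Your route is shorter and gives a quantitative statement (Lipschitz constant $1$ for $g$) that the paper's argument does not; the paper's continuity-in-${\bf x}$ step is really only needed if one starts from the ``$\supp\mu$ spans $\R^n$'' characterization and must upgrade pointwise positivity of $f_{\bf x}(\mu)$ to uniform positivity, whereas with the $A$--$B$ definition you invoke, $g(\mu)^p=A_\mu>0$ is immediate.
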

For $p=2$ this is easy to see, just compose the (continuous) frame map $\mathcal{S}$ with the determinant map $\det: {\mathbb S}^n_{+} \rightarrow \R_{\geq 0} $, so that $ \det \circ \mathcal{S}:  \cP_2(\mathbb{R}^n) \rightarrow \R_{\geq 0}$ is continuous. It follows, that the set of probabilistic frames 
$\cP_{++} := \{\mu \in \cP_2(\mathbb{R}^n): \det \circ \mathcal{S}(\mu)>0 \}= \mathcal{S}^{-1} {\mathbb S}^n_{++}$ is open. Hence, the frame map $\mathcal{S}: \cP_2(\R^n) \rightarrow {\mathbb S}^n_{+}$ defines a foliation on the set ${\mathbb S}^n_{+}$ of positive semidefinite symmetric $n \times n$ matrices. Restricted to frames, this gives a foliation $\mathcal{S}: \cP_{++} \rightarrow {\mathbb S}^n_{++}$ over the set of positive definite symmetric matrices. Theorem \ref{thm_main_1} implies that any two fibers $\cP_{\bS}, \cP_{\bT} \subset \cP_{++}$ are homeomorphic by optimal push-forward with $\bA(\bS, \bT) \in \mathbb{S}^n_{++}$. Further, we write $\cP_{+}:=\cP_{2}\backslash \cP_{++}$.\\

Given two probabilities $\mu,\nu \in \cP_2(\R^n)$ any coupling $\gamma \in \Gamma(\mu,\nu)$ lies in $\cP_2(\R^{2n})$, and hence  
has a frame operator $\bS_{\gamma} \in {\mathbb S}^{2n}_{+}$ displayed in \ref{cond:fix_offdiagoal_coup} below. 
Given $\bM \in {\rm GL}_n(\R)$, put
\begin{equation}
\label{cond:fix_offdiagoal_coup}  D(\bM):=\left\{(\mu,\nu) \in \cP^2_2(\R^n):\ \text{there is } \gamma \in \Gamma(\mu,\nu) \text{ with} \ 
{\bf S}_{\gamma}=\begin{bmatrix}
\bS_{\mu} & \bM \\
 \bM^t & \bS_{\nu}
\end{bmatrix}
\right \}
\end{equation} 
We call a pair $(\mu,\nu)\in D(\bM)$ an $\bM$-dual (pair). For  $\bM=\id$  the elements in $D(\id)$ are the well-known transport duals \cite{wickman2017duality}, 
where usually the set $D_{\mu}=D_{\mu}(\id)$ of transport duals to a fixed marginal $\mu$ is considered.  
We show that any of those sets are convex, see Proposition \ref{convexity_dual}, but unfortunately not closed nor compact, see Corollary \ref{non-compactness} and the example thereafter, so that a Choqu\'et representation of duals is not readily available. 
\begin{theorem}
Let $\bM \in {\rm GL}_n(\R)$ be definite (positive, or negative). If $(\mu,\nu) \in D(\bM)$ then we have  
$W_2(\mu, (\pi_{{\bf x}^{\perp}})_{\#}\mu)\cdot W_2(\nu, (\pi_{{\bf x}^{\perp}})_{\#}\nu) >0$   
for all ${\bf x} \in S^{n-1}$. In particular both $\mu$ and $\nu$ are frames. 

Furthermore the set of  $\bM$-duals  $D(\bM)$ is in bijection to the set of transport duals $D(\id)$, in particular it is not empty. For all transport duals $\nu$ of a given probabilistic frame $\mu$ we have $W_2(\nu, \mu) \geq  W_2(\cP_{\bS_{\nu}}, \cP_{\bS_{\mu}})$ 
and the inequality is an equality if and only if $\nu=(\bS^{-1})_{\#}\mu$ is the canonical dual of $\mu$. Moreover the canonical 
dual is the only transport dual with frame operator $\bS^{-1}$.   
\end{theorem}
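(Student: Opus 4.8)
All four assertions are read off the frame operator $\bS_\gamma$ of a witnessing coupling $\gamma\in\Gamma(\mu,\nu)$, using $\int{\bf v}{\bf v}^t\,d\gamma=\bS_\mu$, $\int{\bf w}{\bf w}^t\,d\gamma=\bS_\nu$, and that the off-diagonal block $\int{\bf v}{\bf w}^t\,d\gamma$ is the prescribed matrix. \emph{Non-degeneracy.} Let $(\mu,\nu)\in D(\bM)$ with such a $\gamma$, so $\int\langle{\bf x},{\bf v}\rangle\langle{\bf x},{\bf w}\rangle\,d\gamma={\bf x}^t\bM{\bf x}$ for every ${\bf x}$. Since $\mu,\nu$ are the marginals of $\gamma$, Proposition \ref{Was_Hyperplane_Frame} together with \eqref{int:Wasserstein_Frame} gives
\begin{equation*}
W_2^2(\mu,(\pi_{{\bf x}^{\perp}})_{\#}\mu)=\int\langle{\bf x},{\bf v}\rangle^2\,d\gamma,\qquad W_2^2(\nu,(\pi_{{\bf x}^{\perp}})_{\#}\nu)=\int\langle{\bf x},{\bf w}\rangle^2\,d\gamma ,
\end{equation*}
so Cauchy--Schwarz in $L^2(\gamma)$ yields the non-degeneracy inequality $W_2(\mu,(\pi_{{\bf x}^{\perp}})_{\#}\mu)\,W_2(\nu,(\pi_{{\bf x}^{\perp}})_{\#}\nu)\geq|{\bf x}^t\bM{\bf x}|$. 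The right-hand side is strictly positive on $S^{n-1}$ precisely because $\bM$ is definite, so each factor is positive for all ${\bf x}$ and Proposition \ref{Was_frame_formulation} identifies both $\mu$ and $\nu$ as probabilistic frames.

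\emph{The bijection.} Send $(\mu,\nu)\in D(\id)$ to $(\mu,(\bM^t)_{\#}\nu)$. If $\gamma$ witnesses $(\mu,\nu)\in D(\id)$, then $({\bf v},{\bf w})\mapsto({\bf v},\bM^t{\bf w})$ pushes $\gamma$ to a coupling of $\mu$ and $(\bM^t)_{\#}\nu$ with frame operator $\mathrm{diag}(\id,\bM^t)\,\bS_\gamma\,\mathrm{diag}(\id,\bM)$, whose blocks are $\bS_\mu$, $\bM$, $\bM^t$ and $\bM^t\bS_\nu\bM=\bS_{(\bM^t)_{\#}\nu}$; hence the image lies in $D(\bM)$. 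Replacing $\bM^t$ by $(\bM^t)^{-1}$ gives a two-sided inverse $(\mu,\nu)\mapsto(\mu,((\bM^t)^{-1})_{\#}\nu)$. Since the graph coupling $(\mathrm{id},\bS_\mu^{-1})_{\#}\mu$ places $(\mu,(\bS_\mu^{-1})_{\#}\mu)$ in $D(\id)$ for every frame $\mu$, the set $D(\id)$ — and therefore $D(\bM)$ — is nonempty.

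\emph{The Wasserstein inequality and its easy side.} Write $\bS=\bS_\mu$. For a transport dual $\nu$ of $\mu$ one has $\mu\in\cP_{\bS}$ and $\nu\in\cP_{\bS_\nu}$ (both frames by the first part with $\bM=\id$), whence $W_2(\nu,\mu)\geq W_2(\cP_{\bS_\nu},\cP_{\bS})=d_W(\bS_\nu,\bS)$ directly from the definition of $d_W$ and Proposition \ref{prop_metric_equivalence}. If in addition $\nu=(\bS^{-1})_{\#}\mu$, then $\bS_\nu=\bS^{-1}$ and $\bA(\bS,\bS^{-1})=\bS^{-1}$ by \eqref{AST}, so Theorem \ref{thm_main_1} applied with this $\bA$ gives $W_2(\mu,\nu)=W_2(\mu,\cP_{\bS^{-1}})=W_2(\cP_{\bS_\nu},\cP_{\bS})$: equality. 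For the final assertion, let $\nu$ be \emph{any} transport dual with $\bS_\nu=\bS^{-1}$ and $\gamma$ a witness; then
\begin{equation*}
\int\|{\bf w}-\bS^{-1}{\bf v}\|^2\,d\gamma=\tr\bS_\nu-2\,\tr\!\Bigl(\bS^{-1}\!\int{\bf v}{\bf w}^t\,d\gamma\Bigr)+\tr\!\bigl(\bS^{-2}\bS\bigr)=\tr\bS^{-1}-2\,\tr\bS^{-1}+\tr\bS^{-1}=0 ,
\end{equation*}
so ${\bf w}=\bS^{-1}{\bf v}$ holds $\gamma$-almost everywhere, forcing $\gamma=(\mathrm{id},\bS^{-1})_{\#}\mu$ and $\nu=(\bS^{-1})_{\#}\mu$. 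Thus the canonical dual is the only transport dual with frame operator $\bS^{-1}$.

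\emph{The hard direction.} It remains to prove that equality, $W_2(\nu,\mu)=W_2(\cP_{\bS_\nu},\cP_{\bS})$, forces $\nu=(\bS^{-1})_{\#}\mu$. Equality means $\nu$ realizes $W_2(\mu,\cP_{\bS_\nu})$, so the uniqueness clause of Theorem \ref{thm_main_1} gives $\nu=\bA_{\#}\mu$ with $\bA:=\bA(\bS,\bS_\nu)\in{\mathbb S}^n_{++}$, reducing the claim to $\bS\bA=\id$, i.e.\ $\bS_\nu=\bS^{-1}$ (after which the preceding paragraph closes the argument). I expect this to be the main obstacle: one must play the transport-dual constraint $\int{\bf v}{\bf w}^t\,d\gamma=\id$ on a witness $\gamma\in\Gamma(\mu,\bA_{\#}\mu)$ against the optimal coupling between $\mu$ and $\bA_{\#}\mu$ provided by Theorem \ref{thm_main_1}, using the residual identity $\int\|{\bf w}-\bA{\bf v}\|^2\,d\gamma=2\,\tr(\bA\bS\bA-\bA)\geq0$ and the Schur-complement bound $\bS_\nu\succeq\bS^{-1}$ that follows from $\bS_\gamma\succeq0$, to pin $\bA$ down to $\bS^{-1}$; making that step rigorous is the heart of the proof.
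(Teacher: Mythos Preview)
Your arguments for the non-degeneracy inequality, the bijection $D(\id)\leftrightarrow D(\bM)$, and the uniqueness of the canonical dual among transport duals with frame operator $\bS^{-1}$ are correct and essentially the same as the paper's (your direct computation $\int\|{\bf w}-\bS^{-1}{\bf v}\|^{2}\,d\gamma=0$ for that last part is in fact cleaner than the paper's contradiction argument via Wasserstein distances). The ``easy'' implications of part~3 are also fine.

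You are right to flag the ``hard direction'' as the obstacle, and your caution is well placed: the implication \emph{equality $\Rightarrow$ canonical} is false as stated. Take $n=1$, $\mu=\tfrac12(\delta_{-1}+\delta_{1})$ so $\bS_\mu=1$ and $\mu_c=\mu$, and $\nu=\tfrac12(\delta_{-2}+\delta_{2})=(2)_{\#}\mu$ so $\bS_\nu=4$. The coupling
\[
\gamma=\tfrac38\delta_{(-1,-2)}+\tfrac18\delta_{(-1,2)}+\tfrac18\delta_{(1,-2)}+\tfrac38\delta_{(1,2)}
\]
has marginals $\mu,\nu$ and $\int xy\,d\gamma=1$, so $\nu$ is a transport dual of $\mu$, but $\nu\neq\mu_c$. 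On the other hand the monotone coupling gives $W_2^2(\mu,\nu)=1$, while $d_W(\bS_\mu,\bS_\nu)^2=1+4-2\sqrt{4}=1$; so $W_2(\mu,\nu)=W_2(\cP_{\bS_\mu},\cP_{\bS_\nu})$ with $\nu$ non-canonical. Your reduction via Theorem~\ref{thm_main_1} to $\nu=\bA_{\#}\mu$ with $\bA=\bA(\bS,\bS_\nu)$ is correct, but the further step ``$\bA=\bS^{-1}$'' cannot be completed in general, as this example shows ($\bA=2\neq1=\bS^{-1}$).

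The paper's own argument for this direction (``the second estimate now follows from $W_2^2(\cP_{\bS_\mu},\cP_{\bS_\nu})=\tr(\bS_\mu+\bS_\nu-2(\bS_\nu^{1/2}\bS_\mu\bS_\nu^{1/2})^{1/2})$'') does not close the gap either: from $(\bS_\nu^{1/2}\bS_\mu\bS_\nu^{1/2})^{1/2}\geq\id$ one only obtains $W_2^2(\cP_{\bS_\mu},\cP_{\bS_\nu})\leq\tr(\bS_\mu+\bS_\nu-2\,\id)=\int\|{\bf x}-{\bf y}\|^2\,d\gamma$ for the \emph{dual} coupling $\gamma$, which bounds $W_2^2(\cP_{\bS_\mu},\cP_{\bS_\nu})$ from above by the dual-coupling cost, not $W_2^2(\mu,\nu)$ from below strictly. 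What survives is the weaker statement that the paper also records: for any non-canonical dual coupling $\gamma$ one has $\int\|{\bf x}-{\bf y}\|^2\,d\gamma>W_2^2(\mu,\mu_c)$, and the canonical dual is the unique transport dual with frame operator $\bS^{-1}$.
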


\section{Applications and refinements of the main results}
Here is an application of Theorem \ref{thm_main_1}. 
For $\bT \in M_n(\R)$, let  $\R_+ \bT:=\{\lambda \bT: \lambda \in \R_+ \}$ be the ray through $\bT$. Take $\bT \in \mathbb{S}^n_{++}$ and consider $W^2_2(\mu,\R_+{\bf T} ):=\inf_{\lambda \in \R_+} W^2_2(\mu,\cP_{\lambda \bT} )$. 
\begin{corollary}
    Let $\mu$ be a probabilistic frame, then \\
    $W^2_2(\mu,\R_+{\bT} )=W^2_2(\mu, (c_{\min}  {\bf A}(\bS_{\mu}, \bT))_{\#}\mu)$   
    where  $c_{\min}=\frac{\tr \ (\bS^{1/2}_{\mu} \bT\bS^{1/2}_{\mu})^{1/2}}{\tr\ \bT} $. 
\end{corollary}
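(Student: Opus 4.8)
The goal is to minimize $W_2^2(\mu,\cP_{\lambda\bT})$ over the ray $\lambda\in\R_+$. The plan is to reduce to Theorem~\ref{thm_main_1}, turn the problem into a one-variable calculus problem, and then identify the minimizer in the push-forward form.

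First I would fix $\bS:=\bS_\mu$ and, for each $\lambda>0$, apply Theorem~\ref{thm_main_1} with target frame operator $\lambda\bT$. By \eqref{AST} the optimal push-forward realizing $W_2(\mu,\cP_{\lambda\bT})$ is $\bA(\bS,\lambda\bT)=\bS^{-1/2}(\bS^{1/2}(\lambda\bT)\bS^{1/2})^{1/2}\bS^{-1/2}=\sqrt\lambda\,\bA(\bS,\bT)$, using homogeneity of the square root. Then \eqref{thm_main_1_Wasserstein_dist} (equivalently \eqref{prop_eq_w_dist_symmetric}) gives
\[
W_2^2(\mu,\cP_{\lambda\bT})=\tr\,\bS+\lambda\tr\,\bT-2\sqrt\lambda\,\tr\,(\bS^{1/2}\bT\bS^{1/2})^{1/2}.
\]
Write $a:=\tr\,\bT>0$, $b:=\tr\,(\bS^{1/2}\bT\bS^{1/2})^{1/2}>0$ (positive since $\bS,\bT\in\mathbb S^n_{++}$), and $c:=\tr\,\bS$, so the function to minimize is $f(\lambda)=c+a\lambda-2b\sqrt\lambda$ on $\R_+$.

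Next I would do the elementary one-variable minimization. Setting $t=\sqrt\lambda$, $f=c+at^2-2bt$ is a convex parabola in $t$ minimized at $t_\ast=b/a$, i.e. $\lambda_\ast=b^2/a^2$. Hence $\sqrt{\lambda_\ast}=b/a=\dfrac{\tr\,(\bS^{1/2}\bT\bS^{1/2})^{1/2}}{\tr\,\bT}=c_{\min}$, matching the claimed constant. Finally, plugging $\lambda_\ast$ back in, the optimal push-forward is $\bA(\bS,\lambda_\ast\bT)=\sqrt{\lambda_\ast}\,\bA(\bS,\bT)=c_{\min}\,\bA(\bS_\mu,\bT)$, and the infimum equals $W_2^2\bigl(\mu,(c_{\min}\bA(\bS_\mu,\bT))_\#\mu\bigr)$ by Theorem~\ref{thm_main_1} again, which is exactly the asserted identity.

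The only genuinely non-routine point — and the place to be careful — is the interchange of infima: $W_2^2(\mu,\R_+\bT)=\inf_\lambda\inf_{\nu\in\cP_{\lambda\bT}}W_2^2(\mu,\nu)$, and I must argue the outer infimum over $\lambda$ is attained (it is, since $f(\lambda)\to\infty$ as $\lambda\to\infty$ and $f$ is continuous with $f(0^+)=c$, so the minimum over $\R_+$ is achieved at the interior critical point $\lambda_\ast$ provided $\lambda_\ast>0$, which holds because $b>0$). One should also note $b>0$ requires $\bT$ positive definite so that $\bS^{1/2}\bT\bS^{1/2}$ is positive definite and its square-root has strictly positive trace; this is given by hypothesis $\bT\in\mathbb S^n_{++}$ and $\mu$ being a frame so $\bS_\mu\in\mathbb S^n_{++}$. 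With these observations the rest is the parabola computation above.
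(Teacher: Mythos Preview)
Your proposal is correct and follows essentially the same approach as the paper: apply Theorem~\ref{thm_main_1} to reduce $W_2^2(\mu,\cP_{\lambda\bT})$ to the explicit expression $\tr\,\bS_\mu+\lambda\,\tr\,\bT-2\sqrt{\lambda}\,\tr\,(\bS_\mu^{1/2}\bT\bS_\mu^{1/2})^{1/2}$, then minimize in $\sqrt{\lambda}$. Your version is in fact slightly more careful, since you explicitly justify the homogeneity $\bA(\bS,\lambda\bT)=\sqrt{\lambda}\,\bA(\bS,\bT)$ and verify that the infimum over $\lambda\in\R_+$ is attained at an interior point.
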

In particular the closest tight frame to a given frame is obtained by putting $\bT=\id $, see also \cite{loukili2020minimization}.     
\begin{proof}
From Theorem \ref{thm_main_1} we know that the probabilistic frame with frame operator $\lambda \bT$ closest to $\mu$ is given by $(\lambda^{1/2}  {\bf A}(\bS_{\mu}, \bT) )_{\#}\mu$. To determine the optimal $\lambda$, identity \ref{thm_main_1_Wasserstein_dist} implies    
\[W^2_2(\mu, (\lambda^{1/2}  {\bf A}(\bS_{\mu}, \bT))_{\#}\mu )=\tr\  \bS_{\mu} +\lambda \cdot \tr\ \bT -2 \sqrt{\lambda}\cdot \tr\ (\bS^{1/2}_{\mu}\bT \bS^{1/2}_{\mu})^{1/2}. \]
The right hand side is differentiable in $\lambda=c^2$, with minimum $c_{\min}$ as stated. 
\end{proof}

Proposition \ref{Was_Hyperplane_Frame} implies that if a unit vector ${\bf x}$ is an eigenvector of ${\bf S_{\mu}}$, 
then the corresponding eigenvalue is given by $W^2_2(\mu, (\pi_{{\bf x}^{\perp}})_{\#}\mu)$. 
Expanding a vector ${\bf x}=(x_1, ..., x_n)$ ($x_i=\langle {\bf x}, {\bf e}_i\rangle$) in an eigen-basis $\{{\bf e}_1,..., {\bf e}_n\}$ of ${\bf S_{\mu}}$ we obtain: 
\begin{corollary}\label{decomposition}  If ${\bf x}=(x_1, ..., x_n)$ is a unit vector in eigen-coordinates then 
$$  W^2_2(\mu, (\pi_{{\bf x}^{\perp}})_{\#}\mu)=\sum^n_{i=1} x^2_i \cdot W^2_2(\mu, (\pi_{{\bf e}_i^{\perp}})_{\#}\mu).
$$
In particular, if ${\bf S_{\mu}}$ is positive definite, then the vectors $\frac{{\bf x}}{W_2(\mu, (\pi_{{\bf x}^{\perp}})_{\#}\mu)}$, where ${\bf x}$ is a unit vector, lie on the ellipsoid
$$\sum^n_{i=1} x^2_i \cdot W^2_2(\mu, (\pi_{{\bf e}_i^{\perp}})_{\#}\mu)=1.
$$ 
\end{corollary}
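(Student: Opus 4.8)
The plan is to derive both statements directly from identity \eqref{int:Wasserstein_Frame}, which (via Proposition \ref{Was_Hyperplane_Frame} with $p=2$) gives $W^2_2(\mu,(\pi_{{\bf x}^{\perp}})_{\#}\mu)={\bf x}^t\bS_{\mu}{\bf x}$ for every unit vector ${\bf x}$. First I would record the observation already made before the corollary: applying this identity to the eigenvectors ${\bf e}_i$ of $\bS_{\mu}$ identifies each eigenvalue $\lambda_i$ with $W^2_2(\mu,(\pi_{{\bf e}_i^{\perp}})_{\#}\mu)$.

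For the decomposition formula, I would expand a unit vector in eigen-coordinates, ${\bf x}=\sum_i x_i{\bf e}_i$ with $\sum_i x_i^2=1$, and exploit that $\{{\bf e}_i\}$ is orthonormal and diagonalizes $\bS_{\mu}$. The bilinear expansion ${\bf x}^t\bS_{\mu}{\bf x}=\sum_{i,j}x_ix_j\langle{\bf e}_i,\bS_{\mu}{\bf e}_j\rangle$ then collapses to $\sum_i\lambda_i x_i^2$ because the off-diagonal terms vanish. Substituting the eigenvalue identity $\lambda_i=W^2_2(\mu,(\pi_{{\bf e}_i^{\perp}})_{\#}\mu)$ together with ${\bf x}^t\bS_{\mu}{\bf x}=W^2_2(\mu,(\pi_{{\bf x}^{\perp}})_{\#}\mu)$ yields the claimed formula.

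For the ellipsoid statement, I would first note that positive definiteness of $\bS_{\mu}$ forces all $\lambda_i>0$, hence $W_2(\mu,(\pi_{{\bf x}^{\perp}})_{\#}\mu)>0$ for every unit ${\bf x}$, so the normalization ${\bf y}={\bf x}/W_2(\mu,(\pi_{{\bf x}^{\perp}})_{\#}\mu)$ is well defined; its eigen-coordinates are $y_i=x_i/W_2(\mu,(\pi_{{\bf x}^{\perp}})_{\#}\mu)$. Dividing the decomposition identity by $W^2_2(\mu,(\pi_{{\bf x}^{\perp}})_{\#}\mu)$ then gives $\sum_i y_i^2\,W^2_2(\mu,(\pi_{{\bf e}_i^{\perp}})_{\#}\mu)=1$, and letting ${\bf x}$ range over $S^{n-1}$ (with the obvious rescaling for the converse inclusion) shows that exactly the stated ellipsoid is traced out. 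The whole argument is a short linear-algebra computation and there is no genuine obstacle; the only points deserving care are that the cross terms drop out precisely because the basis is orthonormal and $\bS_{\mu}$-adapted, and that the $x_i$ in the displayed ellipsoid equation are running coordinates rather than the fixed coordinates of ${\bf x}$.
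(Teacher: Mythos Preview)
Your argument is correct and follows exactly the route the paper takes: the paper's proof is the sentence preceding the corollary, which records that the eigenvalues of $\bS_{\mu}$ are the quantities $W^2_2(\mu,(\pi_{{\bf e}_i^{\perp}})_{\#}\mu)$ and then expands ${\bf x}$ in the eigen-basis using \eqref{int:Wasserstein_Frame}. Your write-up simply makes that expansion explicit and adds the (trivial) rescaling for the ellipsoid claim.
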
 
This Corollary may be applied to solve a basic case of the minimization problem for $p$-frame potentials. 
Recall that $W^p_p(\mu, (\pi_{{\bf x}^{\perp}})_{\#}\mu) \int_{(S^{n-1})}  \vert \left\langle {\bf x},{\bf y} \right\rangle \vert^p d\mu({\bf y})$. 
The probabilistic p-frame potential (PFP) is defined as 
\begin{equation*}
  \PFP(\mu,p)=  \int_{S^{n-1}} W^p_p(\mu, (\pi_{{\bf x}^{\perp}})_{\#}\mu) \ d\mu({\bf x}).  
\end{equation*}
The problem is then to determine $\PFP(p)=\underset{\mu \in \cP(S^{n-1})}{\inf}\PFP(\mu,p)$ and the probabilities $\mu$ that minimize the potential, i.e. those $\mu$ for which $\PFP(p)=\PFP(\mu,p)$.
Significant work has been done on this problem, see \cite{ehler2012minimization}, \cite{wickman2023gradient} and \cite{bilyk2022optimal}, 
but some questions are still open, for example in \cite{bilyk2022optimal} the authors conjecture that for $n \geq 2$ and $p>0$ not even, the optimizer is a finite discrete measure on $S^{n-1}$.  
Let us look at the case $p=2$, writing the 2-Wasserstein distance in terms of an orthogonal eigenbasis $\{{\bf v}_1,...,{\bf v}_n\}$ 
of the frame matrix, putting $x_i:=\langle {\bf x}, {\bf v}_i\rangle$ gives
\[ \int_{S^{n-1}} W^2_2(\mu, (\pi_{{\bf x}^{\perp}})_{\#}\mu) \ d\mu({\bf x})= \int_{S^{n-1}} \sum^n_{i=1}x_i^2 W^2_2(\mu, (\pi_{{\bf v}^{\perp}_i})_{\#}\mu) \ d\mu({\bf x})=\]
\[ \sum^n_{i=1} W^2_2(\mu, (\pi_{{\bf v}^{\perp}_i})_{\#}\mu) \int_{S^{n-1}} x_i^2 \ d\mu({\bf x})  = \sum^n_{i=1} W^4_2(\mu, (\pi_{{\bf v}^{\perp}_i})_{\#}\mu).
\]
We have used $ \int_{S^{n-1}} x_i^2 \ d\mu({\bf x}) = W^2_2(\mu, (\pi_{{\bf v}^{\perp}_i})_{\#}\mu)$. 
Since $\mu$ is a probability on the unit sphere integration of $\sum^n_{i=1}x^2_i=1$ gives the constraint $\sum^n_{i=1} W^2_2(\mu, (\pi_{{\bf v}^{\perp}_i})_{\#}\mu)=1$. The minimum is assumed when all distances $W_2(\mu, (\pi_{{\bf v}^{\perp}_i})_{\#}\mu)$ are equal. In particular the minimum for 2-frame potentials is assumed on a tight frame. \\
Ehler and Okoudjou (Proposition 4.7 and Corollary 4.8 in  \cite{ehler2012minimization}) observed essential properties of $\PFP$ minimizers. We present those here in terms of p-Wasserstein distances with a proof of Proposition 4.7 from \cite{ehler2012minimization} in terms of mass-transport and Wasserstein distances. 
\begin{proposition}\cite{ehler2012minimization}
If $\mu\in \cP(S^{n-1})$ minimizes the frame potential, i.e. $\PFP(\mu,p)= \PFP(p)$,  
then $ W^p_p(\mu, (\pi_{{\bf x}^{\perp}})_{\#}\mu)\geq \PFP(\mu)$ for all ${\bf x} \in S^{n-1}$ and equality holds for all ${\bf x} \in \supp(\mu)$. In particular $\mu$ is a frame. 
\end{proposition}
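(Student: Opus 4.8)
The plan is to interpret $\PFP(\mu,p)$ as the average over ${\bf x}\sim\mu$ of the ``directional energy'' $F_\mu({\bf x}):=W^p_p(\mu,(\pi_{{\bf x}^\perp})_\#\mu)=\int_{S^{n-1}}|\langle{\bf x},{\bf y}\rangle|^p\,d\mu({\bf y})$, and then run a standard variational/first-order argument: if $\mu$ minimizes $\PFP(\cdot,p)$ over $\cP(S^{n-1})$, perturb $\mu$ toward a Dirac mass at a point ${\bf z}\in S^{n-1}$ and read off the sign of the derivative. Concretely, first I would record the bilinear structure: writing $K({\bf x},{\bf y})=|\langle{\bf x},{\bf y}\rangle|^p$, we have $\PFP(\mu,p)=\iint K\,d\mu\,d\mu$, a quadratic form in $\mu$, and $F_\mu({\bf x})=\int K({\bf x},{\bf y})\,d\mu({\bf y})$ is its ``potential''. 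Note $K$ is symmetric, bounded and continuous on $S^{n-1}\times S^{n-1}$, so there are no integrability issues and $F_\mu$ is continuous in ${\bf x}$.

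Next I would carry out the perturbation. For ${\bf z}\in S^{n-1}$ and $t\in[0,1]$ set $\mu_t=(1-t)\mu+t\delta_{\bf z}$; then $\mu_t\in\cP(S^{n-1})$ and, by bilinearity,
\begin{equation*}
\PFP(\mu_t,p)=(1-t)^2\PFP(\mu,p)+2t(1-t)F_\mu({\bf z})+t^2 K({\bf z},{\bf z}).
\end{equation*}
Differentiating at $t=0$ gives $\frac{d}{dt}\Big|_{t=0}\PFP(\mu_t,p)=2\big(F_\mu({\bf z})-\PFP(\mu,p)\big)$. Since $\mu$ is a minimizer, this derivative must be $\geq 0$, hence $F_\mu({\bf z})\geq\PFP(\mu,p)$ for every ${\bf z}\in S^{n-1}$; translating $F_\mu$ back into Wasserstein language via Proposition~\ref{Was_Hyperplane_Frame} (applied to unit vectors) yields $W^p_p(\mu,(\pi_{{\bf z}^\perp})_\#\mu)\geq\PFP(\mu,p)$ for all ${\bf z}\in S^{n-1}$. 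In particular $F_\mu$ is bounded below by the positive constant $\PFP(\mu,p)$ (it is positive since $\mu$ is a probability on the sphere, so its support spans $\R^n$ and $\PFP(\mu,p)>0$), so $W_p(\mu,(\pi_{{\bf z}^\perp})_\#\mu)>0$ for all ${\bf z}$, and Proposition~\ref{Was_frame_formulation} gives that $\mu$ is a probabilistic $p$-frame.

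For the equality-on-the-support claim, integrate the inequality $F_\mu({\bf x})\geq\PFP(\mu,p)$ against $\mu$: the left side integrates to $\PFP(\mu,p)$ exactly, so $\int_{S^{n-1}}\big(F_\mu({\bf x})-\PFP(\mu,p)\big)\,d\mu({\bf x})=0$ with a nonnegative, $\mu$-measurable (indeed continuous) integrand; hence $F_\mu({\bf x})=\PFP(\mu,p)$ for $\mu$-a.e.\ ${\bf x}$, and by continuity of $F_\mu$ this holds on all of $\supp(\mu)$. The main obstacle is not any single step but getting the variational argument fully rigorous — in particular justifying that the one-sided derivative computed above is the correct object to set nonnegative (here it is immediate because $t\mapsto\PFP(\mu_t,p)$ is an explicit quadratic polynomial in $t$, so no dominated-convergence gymnastics are needed) and making sure the constant $\PFP(\mu,p)$ is genuinely positive so the frame conclusion is not vacuous; both are handled by the boundedness and nonnegativity of the kernel $K$ and the fact that $\mu$ lives on the sphere.
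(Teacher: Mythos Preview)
Your argument is correct and follows the same first-order variational idea as the paper, but implemented more cleanly: where the paper moves mass from a small neighborhood of a support point ${\bf x}_+$ to a point ${\bf x}_-$ and bounds the linear term of $\PFP(\mu+\epsilon\nu)$ using continuity of $F_\mu$ on that neighborhood, you perturb along the convex line $\mu_t=(1-t)\mu+t\delta_{\bf z}$, which makes the expansion an exact quadratic in $t$ and removes the need for any local estimate. Your integration trick for the equality-on-support claim is likewise tidier than the paper's contradiction argument (take ${\bf x}_-,{\bf x}_+\in\supp(\mu)$).

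There is one flaw, however: your sentence justifying $\PFP(\mu,p)>0$ is circular. A probability on $S^{n-1}$ need not have spanning support (e.g.\ $\delta_{{\bf e}_1}$), and ``support spans $\R^n$'' is precisely the frame conclusion you are after. The fix is direct and does not require $\mu$ to be a minimizer: for any $\mu\in\cP(S^{n-1})$, pick ${\bf x}_0\in\supp(\mu)$ and a neighborhood $U\ni{\bf x}_0$ with $\mu(U)>0$ small enough that $|\langle{\bf x},{\bf y}\rangle|\geq\tfrac12$ for all ${\bf x},{\bf y}\in U$; then $\PFP(\mu,p)\geq 2^{-p}\mu(U)^2>0$. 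The paper handles this point by invoking compactness of $\cP(S^{n-1})$.
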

\begin{proof}
If we had $W_p(\mu, (\pi_{{\bf x}_-^{\perp}})_{\#}\mu)< W_p(\mu, (\pi_{{\bf x}_+^{\perp}})_{\#}\mu)$ for two distinct points ${\bf x}_- \in S^{n-1}$ and ${\bf x}_+ \in \supp\ \mu$, then we may take the mass located in a small neighborhood, say $K=K({\bf x}_+)$, of ${\bf x}_+$ and move it to the point ${\bf x}_-$. That way we obtain again a probability $\mu_-$ on $S^{n-1}$. One can easily see that $\PFP(\mu_-)< \PFP(\mu)$ contradicting the assumed minimality of $\PFP$ at $\mu$. 
Indeed, for any Borel set $E \subset S^{n-1}$ the signed measure $ \nu(E) :=\mu_-(E) -\mu(E) = \mu(K) \delta_{{\bf x}_-}-\mu(K \cap E)$  
obeys:  
$$ \int_{S^{n-1}} W^p_p(\mu, (\pi_{{\bf x}^{\perp}})_{\#}\mu)d\nu= \mu(K)\cdot W^p_p(\mu, (\pi_{{\bf x}^{\perp}_-})_{\#}\mu)-\int_{K} W^p_p(\mu, (\pi_{{\bf x}^{\perp}})_{\#}\mu)d\mu.
$$
Since $W_p(\mu, (\pi_{{\bf x}^{\perp}})_{\#}\mu)$ depends continuously on ${\bf x}$ one may shrink $K=K({\bf x}_+)$ if necessary, so that 
$W^p_p(\mu, (\pi_{{\bf x}^{\perp}})_{\#}\mu) \geq   \frac{1}{2}(W^p_p(\mu, (\pi_{{\bf x}^{\perp}_{+}})_{\#}\mu) + W^p_p(\mu, (\pi_{{\bf x}^{\perp}_{-}})_{\#}\mu)      ) $ holds for all ${\bf x} \in K$. Then 
$$ \int_{S^{n-1}} W^p_p(\mu, (\pi_{{\bf x}^{\perp}})_{\#}\mu)d\nu \leq \frac{1}{2}\mu(K)( W^p_p(\mu, (\pi_{{\bf x}^{\perp}_-})_{\#}\mu)- W^p_p(\mu, (\pi_{{\bf x}^{\perp}_{+}})_{\#}\mu) ) < 0.
$$
Now the integral on the left is the linear part of the expansion of $\PFP(\mu+\epsilon \nu)$ with respect to $\epsilon$. Since $\PFP(\mu)$ is a minimum 
this linear part cannot be negative and we have a contradiction. 

The first statement follows by taking ${\bf x}_- \notin \supp\ \mu$, the second if we consider 
${\bf x}_- \in \supp\ \mu$. It also follows that $\mu$ is a frame, since compactness of $\cP(S^{n-1})$ implies $\PFP(\mu,p)=\PFP(p)>0$. \end{proof}

Regarding $W_p(\mu, ( \pi_{{\bf x}^{\perp}})_{\#}\mu)$ as a function of ${\bf x}$, we record the identity: 
\begin{corollary}
For any unit-vector ${\bf x} \in S^{n-1}$ and any $p \geq 1$ we have 
\begin{equation}\label{delta-wasser}
W_p(\mu, ( \pi_{{\bf x}^{\perp}})_{\#}\mu)=W_p( (\pi_{{\bf x}})_{\#}\mu, \delta_{{\bf 0}})
\end{equation}
and 
$$W^2_2(\mu, \delta_{{\bf 0}})=W^2_2(\mu, (\pi_{{\bf x}})_{\#}\mu) + W_2^2( (\pi_{{\bf x}})_{\#}\mu, \delta_{{\bf 0}}).$$
\end{corollary}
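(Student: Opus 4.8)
The plan is to reduce both identities to the elementary fact that the only coupling of a measure $\rho \in \cP_2(\R^n)$ (or $\cP_p(\R^n)$) with the point mass $\delta_{{\bf 0}}$ is the product $\rho \otimes \delta_{{\bf 0}}$, so that
$$W_p^p(\rho,\delta_{{\bf 0}}) = \int_{\R^n} \|{\bf z}\|^p\, d\rho({\bf z}).$$

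First I would establish the first identity. Since ${\bf x}$ is a unit vector we have $\pi_{{\bf x}}({\bf v}) = \langle {\bf x},{\bf v}\rangle\,{\bf x}$, hence $\|\pi_{{\bf x}}({\bf v})\| = |\langle {\bf x},{\bf v}\rangle|$; moreover $(\pi_{{\bf x}})_{\#}\mu \in \cP_p(\R^n)$ because $\pi_{{\bf x}}$ is $1$-Lipschitz. Applying the fact above to $\rho = (\pi_{{\bf x}})_{\#}\mu$ and changing variables gives
$$W_p^p\big((\pi_{{\bf x}})_{\#}\mu,\ \delta_{{\bf 0}}\big) = \int_{\R^n} \|\pi_{{\bf x}}({\bf v})\|^p\, d\mu({\bf v}) = \int_{\R^n} |\langle {\bf x},{\bf v}\rangle|^p\, d\mu({\bf v}),$$
and by Proposition \ref{Was_Hyperplane_Frame} the right-hand side equals $W_p^p(\mu, (\pi_{{\bf x}^{\perp}})_{\#}\mu)$.

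For the Pythagorean identity ($p=2$) I would evaluate the three squared distances separately. By the same fact, $W_2^2(\mu,\delta_{{\bf 0}}) = \int \|{\bf v}\|^2\, d\mu$ and $W_2^2((\pi_{{\bf x}})_{\#}\mu,\delta_{{\bf 0}}) = \int |\langle {\bf x},{\bf v}\rangle|^2\, d\mu$. For $W_2^2(\mu,(\pi_{{\bf x}})_{\#}\mu)$ I would repeat the argument behind Proposition \ref{Was_Hyperplane_Frame} verbatim, with the line $\R {\bf x}$ in place of the hyperplane ${\bf x}^{\perp}$: the nearest-point projection ${\bf v}\mapsto \pi_{{\bf x}}({\bf v})$ gives a coupling of $\mu$ with $(\pi_{{\bf x}})_{\#}\mu$ of cost $\int \|{\bf v}-\pi_{{\bf x}}({\bf v})\|^2\, d\mu = \int \|\pi_{{\bf x}^{\perp}}({\bf v})\|^2\, d\mu$, while every coupling of $\mu$ with a measure supported on $\R {\bf x}$ has cost at least $\int \dist({\bf v},\R {\bf x})^2\, d\mu = \int \|\pi_{{\bf x}^{\perp}}({\bf v})\|^2\, d\mu$, so $W_2^2(\mu,(\pi_{{\bf x}})_{\#}\mu) = \int \|\pi_{{\bf x}^{\perp}}({\bf v})\|^2\, d\mu$. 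Adding this to $W_2^2((\pi_{{\bf x}})_{\#}\mu,\delta_{{\bf 0}})$ and integrating the orthogonal splitting $\|{\bf v}\|^2 = \|\pi_{{\bf x}}({\bf v})\|^2 + \|\pi_{{\bf x}^{\perp}}({\bf v})\|^2 = |\langle {\bf x},{\bf v}\rangle|^2 + \|\pi_{{\bf x}^{\perp}}({\bf v})\|^2$ against $\mu$ recovers $W_2^2(\mu,\delta_{{\bf 0}})$.

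The only step needing a word of justification is the evaluation of $W_2^2(\mu,(\pi_{{\bf x}})_{\#}\mu)$, i.e. observing that Proposition \ref{Was_Hyperplane_Frame}, whose proof treats orthogonal projection onto a codimension-one subspace, applies equally to projection onto an arbitrary subspace, here the line $\R {\bf x}$. Alternatively, one may glue the three couplings --- all of which are simultaneously optimal --- into a single transport plan on $\R^n\times\R^n\times\R^n$ concentrated on the graph $\{({\bf v},\pi_{{\bf x}}({\bf v}),{\bf 0}) : {\bf v}\in\R^n\}$ and read both identities off at once from the right-angle decomposition ${\bf v} = \pi_{{\bf x}^{\perp}}({\bf v}) + \pi_{{\bf x}}({\bf v})$; this also makes transparent that $(\pi_{{\bf x}})_{\#}\mu$ sits at the foot of the perpendicular from $\mu$ to $\cP_2(\R {\bf x})$. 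Everything else is a routine change of variables.
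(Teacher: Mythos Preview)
Your proof is correct and follows essentially the same route as the paper: both compute $W_p^p((\pi_{{\bf x}})_{\#}\mu,\delta_{\bf 0})$ by change of variables as $\int |\langle {\bf x},{\bf v}\rangle|^p\,d\mu$ and invoke Proposition~\ref{Was_Hyperplane_Frame}, while the Pythagorean identity is obtained from the orthogonal splitting $\|{\bf v}\|^2=\|\pi_{{\bf x}}({\bf v})\|^2+\|\pi_{{\bf x}^{\perp}}({\bf v})\|^2$. The paper disposes of the second statement with a single sentence (``Pythagoras theorem''), whereas you spell out the needed fact $W_2^2(\mu,(\pi_{{\bf x}})_{\#}\mu)=\int\|\pi_{{\bf x}^{\perp}}({\bf v})\|^2\,d\mu$ and even sketch an alternative tri-coupling argument --- a welcome elaboration, but not a different method.
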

\begin{proof}
Notice that for a unit-vector ${\bf x}$, one has $|\langle {\bf x}, {\bf v} \rangle|^p= \|\pi_{{\bf x}}{\bf v}\|^p =\|\pi_{{\bf x}}{\bf v}-{\bf 0}\|^p$.  
Together with $|\langle {\bf x}, {\bf v} \rangle|^p= \|\pi_{{\bf x}^{\perp}}{\bf v}-{\bf v}\|^p$ from Equation \ref{p-dist-identity} that gives: 
\begin{equation}
\begin{split}
W^p_p(( \pi_{{\bf x}})_{\#}\mu, \delta_{{\bf 0}})=\int_{\R} | v |^p   \ d (\pi_{{\bf x}})_{\#}\mu(v)=
\int_{\R^n} \|\pi_{{\bf x}}({\bf v})\|^p   \ d \mu({\bf v})=\\
=\int_{\R^n}\|{\bf v}- \pi_{{\bf x}^{\perp}}({\bf v})\|^p\ d \mu({\bf v})=W^p_p(\mu, (\pi_{{\bf x}^{\perp}})_{\#}\mu).
\end{split}
\end{equation}
The second statement is Pythagoras theorem. 
\end{proof}

\begin{proposition}
For $\bS \in {\mathbb S}^n_{+}$ the space $\cP_{\bS}$ is convex and hence pathwise connected. The same is true for $\cP_2$ and $\cP_{++}$. 
\end{proposition}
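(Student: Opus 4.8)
The plan is to use that the frame map $\mathcal{S}:\cP_2(\R^n)\to{\mathbb S}^n_{+}$, $\mu\mapsto\bS_\mu=\int{\bf v}{\bf v}^t\,d\mu$, respects convex combinations. First I would observe that $\cP_2(\R^n)$ itself is convex: for $\mu_0,\mu_1\in\cP_2(\R^n)$ and $t\in[0,1]$ the measure $\mu_t:=(1-t)\mu_0+t\mu_1$ is a probability with second moment $(1-t)\int\|{\bf x}\|^2\,d\mu_0+t\int\|{\bf x}\|^2\,d\mu_1<\infty$, and moreover $\bS_{\mu_t}=(1-t)\bS_{\mu_0}+t\bS_{\mu_1}$. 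From this the convexity of the other two sets is immediate. If $\mu_0,\mu_1\in\cP_{\bS}$ then $\bS_{\mu_t}=(1-t)\bS+t\bS=\bS$, so $\mu_t\in\cP_{\bS}$; equivalently $\cP_{\bS}=\mathcal{S}^{-1}(\{\bS\})$ is the preimage of a (convex) point. If $\mu_0,\mu_1\in\cP_{++}$ then $\bS_{\mu_0},\bS_{\mu_1}\in{\mathbb S}^n_{++}$, and since ${\mathbb S}^n_{++}$ is a convex cone, $\bS_{\mu_t}$ is again positive definite, so $\mu_t\in\cP_{++}=\mathcal{S}^{-1}({\mathbb S}^n_{++})$.

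For the ``hence pathwise connected'' part I would check that for any convex $\mathcal C\subseteq\cP_2(\R^n)$ and $\mu_0,\mu_1\in\mathcal C$ the segment $\gamma(t):=(1-t)\mu_0+t\mu_1$ is a continuous path in $(\mathcal C,W_2)$. This follows from the characterization of $W_2$-convergence recalled in the introduction (weak-$\ast$ convergence together with convergence of the second moments): as $t\to t_0$ one has $\int f\,d\gamma(t)\to\int f\,d\gamma(t_0)$ for every bounded continuous $f$ and $\int\|{\bf x}\|^2\,d\gamma(t)\to\int\|{\bf x}\|^2\,d\gamma(t_0)$, both being affine functions of $t$; hence $\gamma(t)\to\gamma(t_0)$ in $W_2$. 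Applying this to $\mathcal C=\cP_{\bS}$, $\cP_2(\R^n)$ and $\cP_{++}$ finishes the proof.

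To keep the connectedness statement non-vacuous one should also record that $\cP_{\bS}\neq\emptyset$: with the spectral decomposition $\bS=\sum_{i=1}^n\lambda_i{\bf e}_i{\bf e}_i^t$ the discrete probability $\mu=\frac1n\sum_{i=1}^n\delta_{\sqrt{n\lambda_i}\,{\bf e}_i}$ satisfies $\bS_\mu=\bS$; for $\cP_2(\R^n)$ and $\cP_{++}$ nonemptiness is clear. I do not foresee a genuine obstacle here — the only point needing a little care is the $W_2$-continuity of the linear interpolation $t\mapsto\mu_t$, which is why I would route it through the weak-$\ast$-plus-moments criterion rather than through an explicit coupling.
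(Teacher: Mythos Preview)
Your proof is correct and follows essentially the same approach as the paper: use linearity of $\mu\mapsto\bS_\mu$ to get convexity, and establish $W_2$-continuity of $t\mapsto(1-t)\mu_0+t\mu_1$ via the weak-$\ast$-plus-second-moments criterion. You are slightly more careful than the paper in spelling out why $\cP_{++}$ is convex (via convexity of ${\mathbb S}^n_{++}$) and in noting nonemptiness of $\cP_{\bS}$, but the argument is the same.
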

\begin{proof}
If $\mu, \nu \in \cP_{\bS}$ then by linearity of the integral the frame operator of $(1-t)\mu+t\nu$ is $\bS$ for every $t \in [0,1]$. 
This curve of measures is continuous in the weak-star topology. Since all involved measures have bounded second moments, it is also 
continuous in the $2$-Wasserstein topology. Convexity for $\cP_2$ and $\cP_{++}$ follows the same argument. 
\end{proof}
From the perspective of optimal transport (maps) this notion of connectedness has to be taken with care, since the given 
path between measures splits mass.  

Noticing that push-forward with $t \mapsto I_{\bA(\bS_{\mu}, \bT)}(t)$ defines a homotopy between $\cP_{++}$ and the fiber $\cP_{\bT}$ that 
is the identity on $\cP_{\bT}$ now gives: 
\begin{proposition}
For any $\bS \in {\mathbb S}^n_{++}$ $i:\cP_{\bS} \hookrightarrow \cP_{++}$ is a deformation retraction with 
respect to the retraction map $r: \cP_{++} \rightarrow \cP_{\bS}$ given by $r(\mu)=\bA(\bS_{\mu}, \bS)_{\#}\mu$. 
In particular the spaces $\cP_{++}$ and $\cP_{\bS}$ are homotopy equivalent.       
\end{proposition}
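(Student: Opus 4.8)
The plan is to write down an explicit strong deformation retraction and verify its continuity in the $2$-Wasserstein topology. For $\mu \in \cP_{++}$ set $\bA_\mu := \bA(\bS_\mu,\bS) \in {\mathbb S}^n_{++}$, the matrix from \ref{AST}, and let $L_\mu(t) := (1-t)\,\id + t\,\bA_\mu$ be the straight-line path from $\id$ to $\bA_\mu$; being a convex combination of positive definite matrices, $L_\mu(t)$ is positive definite for every $t \in [0,1]$. Define
\begin{equation*}
H : \cP_{++} \times [0,1] \longrightarrow \cP_2(\R^n), \qquad H(\mu,t) := \bigl(L_\mu(t)\bigr)_{\#}\,\mu .
\end{equation*}
Since $L_\mu(t)$ is invertible, the frame operator of $H(\mu,t)$ equals $L_\mu(t)\,\bS_\mu\,L_\mu(t) \in {\mathbb S}^n_{++}$, so $H$ actually maps into $\cP_{++}$. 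I would then check the three defining identities: $H(\mu,0) = \id_{\#}\mu = \mu$; $H(\mu,1) = (\bA_\mu)_{\#}\mu = r(\mu)$, which lies in $\cP_{\bS}$ since $\bA(\bS_\mu,\bS)\,\bS_\mu\,\bA(\bS_\mu,\bS) = \bS$ by Proposition \ref{prop_A_is_inverse}; and — the point making it a \emph{strong} deformation retraction — if $\mu \in \cP_{\bS}$ then $\bS_\mu = \bS$, so uniqueness in Proposition \ref{prop_A_is_inverse} forces $\bA(\bS,\bS) = \id$, whence $L_\mu(t) \equiv \id$ and $H(\mu,t) = \mu$ for all $t$. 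In particular $r\circ i = \id_{\cP_{\bS}}$ and $H$ is a homotopy rel $\cP_{\bS}$ between $\id_{\cP_{++}}$ and $i\circ r$; homotopy equivalence of $\cP_{++}$ and $\cP_{\bS}$ follows at once.

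The only substantive work is continuity of $H$ on $\cP_{++}\times[0,1]$, which I would factor into three pieces. First, $\mu \mapsto \bS_\mu$ is continuous on $\cP_2(\R^n)$, as noted after Proposition \ref{prop_metric_equivalence}. Second, $(\bS',\bS'') \mapsto \bA(\bS',\bS'')$ is continuous on ${\mathbb S}^n_{++}\times{\mathbb S}^n_{++}$, since formula \ref{AST} builds it from matrix inversion and the matrix square root, both continuous on the open cone of positive definite matrices; composing, $(\mu,t) \mapsto L_\mu(t)$ is jointly continuous into ${\mathbb S}^n_{++}$. Third, the push-forward pairing $({\bf B},\nu) \mapsto {\bf B}_{\#}\nu$ is continuous from $M_n(\R)\times\cP_2(\R^n)$ to $\cP_2(\R^n)$: if $\gamma$ is an optimal coupling of $(\nu_k,\nu)$ then $({\bf B}_k\times{\bf B}_k)_{\#}\gamma$ couples $({\bf B}_k)_{\#}\nu_k$ and $({\bf B}_k)_{\#}\nu$, while pushing $\nu$ forward under ${\bf v}\mapsto({\bf B}_k{\bf v},{\bf B}{\bf v})$ couples $({\bf B}_k)_{\#}\nu$ and ${\bf B}_{\#}\nu$, so
\begin{equation*}
W_2\bigl(({\bf B}_k)_{\#}\nu_k,\, {\bf B}_{\#}\nu\bigr) \;\le\; \|{\bf B}_k\|_{op}\, W_2(\nu_k,\nu) \;+\; \|{\bf B}_k-{\bf B}\|_{op}\,\Bigl(\int \|{\bf v}\|^2\, d\nu({\bf v})\Bigr)^{1/2},
\end{equation*}
and the right-hand side tends to $0$ whenever ${\bf B}_k\to{\bf B}$ and $\nu_k\to\nu$ in $W_2$, because $\|{\bf B}_k\|_{op}$ stays bounded and $\nu\in\cP_2(\R^n)$. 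Composing the three pieces gives continuity of $H$, hence of $r=H(\cdot,1)$, and $i$ is continuous as an inclusion of subspaces.

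The main (and essentially only) obstacle is this last continuity statement in the $W_2$-topology rather than merely weak-$*$: one must present the push-forward map as locally Lipschitz in the measure with constant the operator norm of the matrix, and control its dependence on the matrix by the second moment of the measure — the displayed two-term estimate does exactly that, and local boundedness of second moments along $W_2$-convergent sequences is automatic. Everything else is formal: positive definiteness of $L_\mu(t)$, the identity $\bA(\bS,\bS)=\id$, and the transformation rule $\bS_{{\bf B}_{\#}\mu} = {\bf B}\,\bS_\mu\,{\bf B}^t$ follow directly from the definitions and from results already available (Theorem \ref{thm_main_1} and Proposition \ref{prop_A_is_inverse}). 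Finally, nothing hinges on the straight line: any path from $\id$ to $\bA_\mu$ through invertible matrices depending continuously on $\bA_\mu$ — for instance the geodesic $\bA_\mu^{\,t} = \exp(t\log\bA_\mu)$ in ${\mathbb S}^n_{++}$ — yields the same conclusion, so the construction is robust.
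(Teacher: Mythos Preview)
Your proof is correct and follows essentially the same approach as the paper: the homotopy $H(\mu,t)=\bigl((1-t)\id+t\,\bA(\bS_\mu,\bS)\bigr)_{\#}\mu$ is exactly the paper's $(I_{\bA(\bS_\mu,\bS)}(t))_{\#}\mu$. You are considerably more careful than the paper about the joint continuity of $H$ --- the paper dispatches this in a single sentence (``push-forward with a continuous function is continuous''), whereas your two-term $W_2$ estimate makes the dependence on both the matrix and the measure explicit --- and you also verify the strong deformation retraction property $H(\mu,t)=\mu$ for $\mu\in\cP_{\bS}$, which the paper leaves implicit.
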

\begin{proof}
We note that all maps stated in the proposition are well-defined and continuous on Wasserstein space $(\cP_{++}, W_2)$. 
This is since push-forward with a continuous function is continuous.       
Now if $\mu \in \cP_{{\bS}}$ is $r(\mu)=\bA(\bS, \bS)_{\#}\mu=({\bf \id})_{\#}\mu=\mu$, so that  
$r \circ i = \id_{\cP_{{\bS}}}$.  All we need to show  is that $i \circ r = r$ is homotopic to the identity map on 
$\cP_{++}$. Such a homotopy is given by 
$$ H(t,\mu)= (I_{\bA(\bS_{\mu}, \bS)}(t))_{\#}\mu $$
for $(t, \mu) \in [0,1] \times \cP_{++}$. 
\end{proof}

Taking congruences of $\bS$ by $\bA\in {\mathbb S}_{++}$  defines an action of the multiplicative group 
$({\mathbb S}^n_{++},\cdot)$ on itself, denoted by $C_{\bS}(\bA):= \bA\bS\bA^t=\bA\bS\bA$. 
On the other hand push-forward with $\bA \in {\mathbb S}^n_{++}$ defines a group action on $\cP_{++}$ that commutes 
with the foliation map, so that: $C_{\bA} \circ S= S \circ \bA_{\#}$. In diagrams the stated action looks like: 
\begin{center}
\begin{tikzcd}{\mathbb S}^n_{++} \times \cP_{++} \arrow{rr} \arrow[d,"\id \times S" '] & &\cP_{++} \arrow{d}{S}\\
{\mathbb S}^n_{++} \times {\mathbb S}^n_{++}  \arrow{rr} & &{\mathbb S}^n_{++}
\end{tikzcd}
\hspace*{2cm}
\begin{tikzcd}(\bA, \mu) \arrow[rr, mapsto] \arrow[d, mapsto]{S} & &\bA_{\#}\mu \arrow[d, mapsto]{S}\\
(\bA, \bS_{\mu})  \arrow[rr, mapsto] & & \bA\bS_{\mu}\bA^t
\end{tikzcd}
\end{center}
To see this, we recall how frame operators transform under linear push-forwards, see \cite{loukili2020minimization}. 
Let $\bT$ be a linear transformation of $\R^n$, and $\mu$ be a probabilistic frame, then the frame operator of ${\bf T}_{\#}\mu$ is determined by     
\begin{equation}\label{fop-transform} 
\begin{split} & \langle {\bf x}, {\bf S}_{{\bf T}_{\#}\mu}{\bf x}\rangle =\int \langle{\bf x}, {\bf y}\rangle^2 \ d {\bf T}_{\#}\mu({\bf y})=
\int \langle{\bf x}, {\bf T} {\bf y}\rangle^2 \ d \mu({\bf y})=\\
&\int \langle {\bf T}^t{\bf x},  {\bf y}\rangle^2 \ d \mu({\bf y}) 
=\langle {\bf T}^t {\bf x}, {\bf S}_{\mu} {\bf T}^t {\bf x} \rangle=\langle {\bf x}, {\bf T} {\bf S}_{\mu} {\bf T}^t {\bf x} \rangle. 
\end{split}
\end{equation}
Since this identity holds for all ${\bf x}\in \R^n$, we have ${\bf S}_{{\bf T}_{\#}\mu}={\bf T} {\bf S}_{\mu} {\bf T}^t$. 
Because ${\bf S}_{\mu}$ is positive definite ${\bf S_{{\bf T}_{\#}\mu}}$ is always positive semi-definite. 
If ${\bf T}$ is invertible, then so is ${\bf S_{{\bf T}_{\#}\mu}}$. Summarizing the previous discussion gives: 
\begin{proposition}\label{prop_geod_act} 
The lift of the congruence action is faithful, continuous and minimizes distance with respect to $W_2$.  
In particular, push-forward with the interpolation maps $I_{\bA}(t):=(1-t){\bf  \id} + t\bA$ defines 2-Wasserstein constant speed geodesic curves $((I_{\bA}(t))_{\#}\mu)_{t \in [0,1]}$ in $(\cP_{++}, W_2)$.  
\end{proposition}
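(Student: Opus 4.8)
The plan is to establish the three adjectives of the first sentence one at a time for the lifted action $\bA \mapsto \bA_{\#}$ on $\cP_{++}$, and then to read off the geodesic statement from McCann's displacement interpolation; faithfulness and continuity are soft, while distance minimization is essentially a restatement of Theorem~\ref{thm_main_1}. For faithfulness I would use the commutation relation $S \circ \bA_{\#} = C_{\bA} \circ S$ recorded above together with surjectivity of the frame map $S$ onto ${\mathbb S}^n_{++}$ (each $\bS \in {\mathbb S}^n_{++}$ is, e.g., the frame operator of the centered Gaussian with covariance $\bS$): if $\bA_{\#} = {\bf B}_{\#}$ on $\cP_{++}$ then $C_{\bA} \circ S = C_{{\bf B}} \circ S$, hence $\bA\bS\bA = {\bf B}\bS{\bf B}$ for every $\bS \in {\mathbb S}^n_{++}$, and taking $\bS = \id$ gives $\bA^2 = {\bf B}^2$, so $\bA = {\bf B}$ by uniqueness of the positive square root.

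For continuity, fix $\bA$ and push an optimal coupling of $\mu$ and $\nu$ forward by $\bA \times \bA$; this produces a coupling of $\bA_{\#}\mu$ and $\bA_{\#}\nu$ of cost at most $\|\bA\|_{op}^2\, W_2^2(\mu,\nu)$, so $\bA_{\#}$ is $\|\bA\|_{op}$-Lipschitz. For the dependence on the matrix, the diagonal coupling of $\mu$ with itself, pushed forward by $\bA \times {\bf B}$, gives $W_2^2(\bA_{\#}\mu, {\bf B}_{\#}\mu) \le \int \|(\bA-{\bf B}){\bf x}\|^2\, d\mu({\bf x}) \le \|\bA-{\bf B}\|_{op}^2\, \tr\, \bS_{\mu}$. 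A single triangle inequality combining these two estimates then yields joint continuity of $({\bf A},\mu) \mapsto {\bf A}_{\#}\mu$ on ${\mathbb S}^n_{++} \times \cP_{++}$.

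For distance minimization, note that since $\bA \in {\mathbb S}^n_{++}$ the linear map ${\bf x} \mapsto \bA{\bf x}$ is the gradient of the strictly convex quadratic ${\bf x} \mapsto \tfrac12 \langle {\bf x}, \bA{\bf x}\rangle$, so the plan $(\id \times \bA)_{\#}\mu$ is supported on a cyclically monotone set and hence is an optimal coupling from $\mu$ to $\bA_{\#}\mu$; this yields $W_2^2(\mu, \bA_{\#}\mu) = \int \|{\bf x} - \bA{\bf x}\|^2\, d\mu = \tr\, \bS_{\mu}(\id - \bA)^2$, which by Theorem~\ref{thm_main_1} equals $W_2^2(\mu, \cP_{\bA\bS_{\mu}\bA})$, i.e. $\bA_{\#}\mu$ is the point of its fiber nearest to $\mu$ and is reached along an optimal map. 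The geodesic claim then follows: $I_{\bA}(t) = (1-t)\id + t\bA$ lies in ${\mathbb S}^n_{++}$ for $t \in [0,1]$ by convexity of ${\mathbb S}^n_{++}$, so $(I_{\bA}(t))_{\#}\mu \in \cP_{++}$, and since $\bA$ is the Brenier map from $\mu$ to $\bA_{\#}\mu$, McCann's displacement-interpolation theorem (see \cite{villani2009optimal}) identifies $t \mapsto ((1-t)\id + t\bA)_{\#}\mu = (I_{\bA}(t))_{\#}\mu$ as the constant-speed $W_2$-geodesic between $\mu$ and $\bA_{\#}\mu$, so that $W_2\big((I_{\bA}(s))_{\#}\mu,\, (I_{\bA}(t))_{\#}\mu\big) = |t-s|\, W_2(\mu, \bA_{\#}\mu)$.

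The one delicate point I expect is justifying optimality of the linear maps $\bA$ and $I_{\bA}(t)$ — and the stated form of the interpolating geodesic — without assuming $\mu$ absolutely continuous: here one must invoke the sufficiency direction of Brenier's theorem (the graph of the subdifferential of a convex function is cyclically monotone, so any plan concentrated there is optimal) and the version of displacement interpolation valid for arbitrary $\mu \in \cP_2(\R^n)$, rather than the existence/uniqueness statements. Everything else reduces to the triangle inequality, convexity of ${\mathbb S}^n_{++}$, and the trace identity already available from Theorem~\ref{thm_main_1}.
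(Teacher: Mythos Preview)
Your proposal is correct and follows essentially the same route as the paper, which simply says the proposition is ``a simple consequence of Theorem~\ref{thm_main_1}'' and refers to \cite{figalli-glaudo} for the interpolation-geodesic statement. In fact you give considerably more than the paper does: the paper's one-line proof does not spell out the faithfulness or continuity claims at all, while you supply clean explicit arguments for both (the frame-operator argument for faithfulness and the Lipschitz coupling estimates for continuity), and you make explicit the Brenier/cyclic-monotonicity justification for optimality of the linear map $\bA$ that underlies both the trace formula and the displacement-interpolation geodesic, including the point that absolute continuity of $\mu$ is not needed for the sufficiency direction.
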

The proof is a simple consequence of Theorem \ref{thm_main_1}. The statement about interpolation geodesics is standard, see, for example, \cite{figalli-glaudo} Section 3.1.1.   

As a consequence one can work with the set of frames with a suitable frame operator. A typical choice would be to work with 
Parseval frames, since those are invariant under rotations.    
 

\section{Wasserstein openness of the set of probabilistic p-frames.}
We return to the setting of p-frames to show a general openness result.  
We start with a proof of Proposition \ref{Was_Hyperplane_Frame}. 
\begin{proof}[Proof of Proposition \ref{Was_Hyperplane_Frame}]\label{proof_prop_2_3}
First, note that by Cauchy-Schwarz the integral on the right is well defined for all $\mu \in \cP_p(\R^n)$. 
Now, for any unit vector ${\bf x} \in S^{n-1}$  
\begin{equation}\label{p-dist-identity}
|\langle {\bf x}, {\bf v} \rangle|^p=\text{dist}^p( {\bf v}, {\bf x}^{\perp})= \inf_{{\bf y} \in {\bf x}^{\perp}}\|{\bf y}-{\bf v}\|^p  
= \|\pi_{{\bf x}^{\perp}}({\bf v})-{\bf v}\|^p.
\end{equation} 
By definition
$$W^p_p(\mu, (\pi_{{\bf x}^{\perp}})_{\#}\mu)=\inf_{\gamma \in \Gamma(\mu, (\pi_{{\bf x}^{\perp}})_{\#}\mu)}
\int_{\R^{2n}} \|{\bf v}-{\bf y}\|^p\ d\gamma({\bf v},{\bf y}),
$$
but that minimum is taken on when pushing-forward the mass with the orthogonal projection onto ${\bf x}^{\perp}$, since that way every point moves minimal distance to target, hence 
$$W^p_p(\mu, (\pi_{{\bf x}^{\perp}})_{\#}\mu)=\int_{\R^{2n}}\|{\bf v}-{\bf y}\|^p\ d (\text{Id}\times \pi_{{\bf x}^{\perp}})_{\#}\mu=\int_{\R^n}\|{\bf v}- \pi_{{\bf x}^{\perp}}({\bf v})\|^p\ d \mu({\bf v}).
$$
Since $\text{supp}((\pi_{{\bf x}^{\perp}})_{\#}\mu) \subset {\bf x}^{\perp}$ we have 
 $ \inf_{\nu \in \cP_p({\bf x}^{\perp}) } W_p(\mu, \nu) \leq W_p(\mu, (\pi_{{\bf x}^{\perp}})_{\#}\mu)$. 
 
On the other hand, the  orthogonal projection $\pi_{{\bf x}^{\perp}}({\bf v})$ of any ${\bf v} \in \supp\ \mu$ minimizes the distance of ${\bf v}$  to ${\bf x}^{\perp}$,  therefore the push-forward of $\mu$ by  $\pi_{{\bf x}^{\perp}}$ minimizes the Wasserstein distance among all measures supported in ${\bf x}^{\perp}$. 
\end{proof}
\begin{proof}[Proof of Proposition \ref{Was_frame_formulation}]
This is a straight forward  consequence of propositon \ref{Was_Hyperplane_Frame}. For the characterization of tight and Parseval frames, observe that for any ${\bf x}\neq {\bf 0}  $:
$$  \int_{\R^n}|\langle {\bf v}, {\bf x} \rangle|^p\ d \mu({\bf v})= \|{\bf x} \|^p \int_{\R^n}|\langle {\bf v}, {\bf x}/ \|{\bf x} \| \rangle|^p\ d \mu({\bf v})=\|{\bf x} \|^p \ W^p_p(\mu, (\pi_{{\bf x}^{\perp}})_{\#}\mu).
$$
\end{proof}

\begin{proposition}[Openness of the set of probabilistic frames]
The set of probabilistic p-frames is open in the $W_p$ topology. 
\end{proposition}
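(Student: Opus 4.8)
The plan is to combine the frame characterization of Proposition \ref{Was_frame_formulation} with a uniform perturbation estimate for the degeneracy distances ${\bf x} \mapsto W_p(\mu,(\pi_{{\bf x}^{\perp}})_{\#}\mu)$. Fix a probabilistic $p$-frame $\mu \in \cP_p(\R^n)$ and set $g_\mu({\bf x}) := W_p(\mu,(\pi_{{\bf x}^{\perp}})_{\#}\mu)$ for ${\bf x} \in S^{n-1}$. By Proposition \ref{Was_Hyperplane_Frame} we have $g_\mu({\bf x}) = \left(\int_{\R^n} |\langle {\bf x},{\bf v}\rangle|^p\,d\mu({\bf v})\right)^{1/p}$; the integrand is continuous in ${\bf x}$ and dominated by the $\mu$-integrable function $\|{\bf v}\|^p$, so $g_\mu: S^{n-1} \to \R_{\geq 0}$ is continuous, and by Proposition \ref{Was_frame_formulation} it is strictly positive. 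Since $S^{n-1}$ is compact, $c_\mu := \min_{{\bf x} \in S^{n-1}} g_\mu({\bf x}) > 0$. I will then show that every $\nu \in \cP_p(\R^n)$ with $W_p(\mu,\nu) < c_\mu/4$ is again a probabilistic $p$-frame, which gives openness.

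The key step is the bound $|g_\mu({\bf x}) - g_\nu({\bf x})| \le 2\,W_p(\mu,\nu)$, valid for \emph{all} ${\bf x} \in S^{n-1}$ with one common constant. Two ingredients enter. First, the orthogonal projection $\pi_{{\bf x}^{\perp}}$ is $1$-Lipschitz, so push-forward by it contracts $W_p$: taking an optimal coupling $\gamma$ of $\mu,\nu$, the measure $(\pi_{{\bf x}^{\perp}} \times \pi_{{\bf x}^{\perp}})_{\#}\gamma$ is a coupling of $(\pi_{{\bf x}^{\perp}})_{\#}\mu$ and $(\pi_{{\bf x}^{\perp}})_{\#}\nu$ with cost $\int \|\pi_{{\bf x}^{\perp}}{\bf v} - \pi_{{\bf x}^{\perp}}{\bf w}\|^p\,d\gamma \le \int \|{\bf v}-{\bf w}\|^p\,d\gamma = W_p^p(\mu,\nu)$, hence $W_p((\pi_{{\bf x}^{\perp}})_{\#}\mu,(\pi_{{\bf x}^{\perp}})_{\#}\nu) \le W_p(\mu,\nu)$. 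Second, the triangle inequality for $W_p$ (legitimate since $p \ge 1$) gives
$$g_\mu({\bf x}) \le W_p(\mu,\nu) + W_p(\nu,(\pi_{{\bf x}^{\perp}})_{\#}\nu) + W_p((\pi_{{\bf x}^{\perp}})_{\#}\nu,(\pi_{{\bf x}^{\perp}})_{\#}\mu) \le g_\nu({\bf x}) + 2\,W_p(\mu,\nu),$$
and exchanging the roles of $\mu$ and $\nu$ yields the reverse inequality.

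Putting these together, if $W_p(\mu,\nu) < c_\mu/4$ then for every ${\bf x} \in S^{n-1}$ we get $g_\nu({\bf x}) \ge g_\mu({\bf x}) - 2W_p(\mu,\nu) > c_\mu - c_\mu/2 = c_\mu/2 > 0$, so $\nu$ is a probabilistic $p$-frame by Proposition \ref{Was_frame_formulation}. I expect the only real point to watch is the \emph{uniformity in ${\bf x}$} of the perturbation estimate — this is what lets compactness of $S^{n-1}$ upgrade the pointwise positivity of $g_\mu$ to a robust lower bound — but the $W_p$-contraction factor of $\pi_{{\bf x}^{\perp}}$ is $1$ regardless of ${\bf x}$, so the bound is uniform for free and the argument closes. (For $p=2$ one can alternatively compose the frame map with $\det$ as noted after Proposition \ref{prop_metric_equivalence}, but the argument above treats all $p \in [1,\infty)$ uniformly.)
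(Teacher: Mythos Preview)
Your proof is correct. Both your argument and the paper's share the same skeleton: show that $g_\mu({\bf x}) := W_p(\mu,(\pi_{{\bf x}^{\perp}})_{\#}\mu)$ is continuous on $S^{n-1}$, use compactness to get $c_\mu := \min g_\mu > 0$, and conclude that a small $W_p$-ball around $\mu$ consists of frames. The differences are in the two technical steps. For continuity in ${\bf x}$ you invoke dominated convergence on the integral formula from Proposition~\ref{Was_Hyperplane_Frame}; the paper instead proves an explicit Lipschitz estimate $W_p((\pi_{{\bf x}^{\perp}})_{\#}\mu,(\pi_{{\bf y}^{\perp}})_{\#}\mu) \le 2\,M_p(\mu)^{1/p}\,\|{\bf x}-{\bf y}\|$ via a direct coupling computation. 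For the final step, you run a \emph{stability} argument --- the uniform perturbation bound $|g_\mu({\bf x}) - g_\nu({\bf x})| \le 2\,W_p(\mu,\nu)$ coming from the $1$-Lipschitz property of $\pi_{{\bf x}^{\perp}}$ --- whereas the paper uses a \emph{separation} argument: any non-frame $\nu$ is supported in some hyperplane ${\bf x}^{\perp}$, so by the infimum characterization in Proposition~\ref{Was_Hyperplane_Frame} one has $W_p(\mu,\nu) \ge g_\mu({\bf x}) \ge c_\mu$. Your route is more self-contained (it does not need the second equality of Proposition~\ref{Was_Hyperplane_Frame}) and the contraction lemma you isolate is a useful standalone fact; the paper's route is shorter at the end and yields the larger radius $c_\mu$ instead of $c_\mu/4$.
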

\begin{proof} 
Suppose $\mu \in \cP_p(\R^n)$. Then by Proposition \ref{Was_Hyperplane_Frame},
$$W_p(\mu, (\pi_{{\bf x}^{\perp}})_{\#}\mu)=\inf_{\nu \in \cP_p({\bf x}^{\perp})}W_p(\mu, \nu)$$  
represents the p-Wasserstein distance of $\mu$ to ${\bf x}^{\perp}$.  
If $\mu$ is a probabilistic frame the p-Wasserstein distance between $\mu$ and any linear subspace, particularly the hyperplanes ${\bf x}^{\perp}$, must be positive.    
Indeed, if $\mu$ is a frame $\text{supp}\ \mu$ contains points in the complement of ${\bf x}^{\perp}$.  For any point in $\text{supp}\ \mu \cap ({\bf x}^{\perp})^C$ there exists a neighborhood disjoint to ${\bf x}^{\perp}$ that has positive $\mu$ mass, so that $\mu$ has positive p-Wasserstein distance to ${\bf x}^{\perp}$. 

Now, for a fixed probabilistic frame, say $\mu$, the p-Wasserstein distance to ${\bf x}^{\perp}$ depends continuously on the subspace ${\bf x}^{\perp}$, and therefore continuously on ${\bf x}$, in the topology induced by the p-Wasserstein metric. To see that take two vectors ${\bf x}, {\bf y} \in \R^n$, then the triangle inequality implies 
$$ | W_p(\mu, (\pi_{{\bf x}^{\perp}})_{\#}\mu)-  W_p(\mu, (\pi_{{\bf y}^{\perp}})_{\#} \mu)| \leq W_p((\pi_{{\bf x}^{\perp}})_{\#}\mu, (\pi_{{\bf y}^{\perp}})_{\#}\mu)
$$  
To estimate the Wasserstein distance on the right consider the coupling $\gamma:=D_{\#}\mu \in \Gamma(\mu, \mu)$, that is the push-forward of $\mu$ under the diagonal map $D({\bf x})=({\bf x},{\bf x}) \in \R^{2n}$.  Pushing this coupling forward with $\pi_{{\bf x}^{\perp}} \times \pi_{{\bf y}^{\perp}}$ gives a coupling 
of $(\pi_{{\bf x}^{\perp}})_{\#} \mu$ and $(\pi_{{\bf y}^{\perp}})_{\#} \mu$. Using this coupling gives the estimate 
\[W^p_p((\pi_{{\bf x}^{\perp}})_{\#}\mu, (\pi_{{\bf y}^{\perp}})_{\#}\mu) \leq \int_{\R^n} \| \pi_{{\bf x}^{\perp}}({\bf z}) -  \pi_{{\bf y}^{\perp}}({\bf z}) \|^p \ d\mu({\bf z})
\]
Using $\pi_{{\bf x}^{\perp}}({\bf z})={\bf z}-\langle{\bf z}, {\bf x} \rangle {\bf x}$ we obtain 
\[\int_{\R^n} \| \pi_{{\bf x}^{\perp}}({\bf z}) -  \pi_{{\bf y}^{\perp}}({\bf z}) \|^p \ d\mu=
\int_{\R^n} \| \langle{\bf z}, {\bf x} \rangle {\bf x}-\langle{\bf z}, {\bf y} \rangle {\bf y}    \|^p \ d\mu.
\]
Now put ${\bf y} = {\bf x} + \hat{ {\bf y}}$ to get $\langle{\bf z}, {\bf y} \rangle {\bf y} =\langle{\bf z}, {\bf x} + \hat{ {\bf y}} 
\rangle ({\bf x} + \hat{ {\bf y}}) = \langle{\bf z}, {\bf x} \rangle {\bf x} + \langle{\bf z}, \hat{ {\bf y}} 
\rangle {\bf x} + \langle{\bf z}, {\bf x} + \hat{ {\bf y}} \rangle \hat{{\bf y}}$. Hence 
\[
\int_{\R^n} \| \langle{\bf z}, {\bf x} \rangle {\bf x}-\langle{\bf z}, {\bf y} \rangle {\bf y}    \|^p \ d\mu
=\int_{\R^n} \|  \langle{\bf z}, \hat{ {\bf y}} \rangle {\bf x} + \langle{\bf z}, {\bf x} + \hat{ {\bf y}} \rangle \hat{{\bf y}}  \|^p \ d\mu\]
Minkowski's inequality followed by Cauchy-Schwarz while using $\|{\bf x}\|=1$ gives 
\[\leq 2^{p-1} \int_{\R^n} (\|  \langle{\bf z}, \hat{ {\bf y}} \rangle {\bf x} \|^p +\| \langle{\bf z}, {\bf x} + \hat{ {\bf y}} \rangle \hat{{\bf y}}  \|^p)  d\mu \leq 2^{p-1}  \|\hat{ {\bf y}}\|^p (1 +  \|{\bf x} + \hat{ {\bf y}}\|^p) 
\int_{\R^n} \|{\bf z}\|^p  \ d\mu.
\]
Since ${\bf y}= {\bf x}+ \hat{ {\bf y}}$ is a unit vector, we obtain 
\[
W^p_p((\pi_{{\bf x}^{\perp}})_{\#}\mu, (\pi_{{\bf y}^{\perp}})_{\#}\mu) \leq 2^p \|\hat{ {\bf y}}\|^p \int_{\R^n} \|{\bf z}\|^p  
 \ d\mu  =2^p M_p(\mu) \|{\bf y}-{\bf x}\|^p,
\]
which is continuity of the p-Wasserstein distance for projections. 
To conclude the argument we note that the space of $1$-codimensional subspaces in $\mathbb{R}^n$ is homeomorphic to    
$\mathbb{P}(\R^n)$ by identifying the subspace ${\bf x}^{\perp} $ with the projective line $ [{\bf x}]$ defined by any of its 
normal vectors $\pm {\bf x}$. Now $\mathbb{P}(\R^n)$ is compact and compactness implies that the continuous function ${\bf x} \mapsto W_p(\mu, (\pi_{{\bf x}^{\perp}})_{\#}\mu)$ takes on its minimum at some point, say ${\bf x}_{\min}$. 
We have already noticed that $W_p(\mu, (\pi_{{\bf x}^{\perp}})_{\#}\mu)>0$ for any subspace ${\bf x}^{\perp}$ of codimesion $1$, hence 
$$0 < c := W_p(\mu, (\pi_{{\bf x}^{\perp}_{\min}})_{\#}\mu) \leq W_p(\mu, (\pi_{{\bf x}^{\perp}})_{\#} \mu)$$ 
for all ${\bf x} \in S^{n-1}$. In particular the set $\{\nu \in \cP_p(\R^n):\  W_p(\nu, \mu) <  W_p(\mu, (\pi_{{\bf x}^{\perp}_{\min}})_{\#}\mu) \}$ 
is an open set of probabilistic p-frames. 
\end{proof}

\section{Wasserstein distances: Standard estimates and uniqueness}
The estimates displayed in this section are adapted versions of main results of \cite{Gelbrich90} and particularly \cite{cuesta1996lower} 
where instead of frame operators covariance operators are considered. The key arguments are almost the same. 
However the condition when the lower estimate for Wasserstein distances, stated before Theorem \ref{thm_main_1} in the introduction,   
is an equality is more direct and easier for probabilistic frames. This is because a frame operator is positive definite, 
while the covariance generally is not. Moreover, we do not need to consider centered measures. Recall from Equation \ref{AST}: $\bA(\bS,\bT)=\bS^{-1/2}(\bS^{1/2} \bT \bS^{1/2})^{1/2}\bS^{-1/2}$, so that 
$\bA^{-1}(\bS,\bT)=\bS^{1/2}(\bS^{1/2} \bT \bS^{1/2})^{-1/2}\bS^{1/2}$. 
These matrices have somewhat surprising properties that may not seem obvious at first glance. The next proposition and lemma will 
shed some light on some of those properties.  
\begin{proposition}\label{prop_A_is_inverse}
For any fixed $\bS \in {\mathbb S}^n_{++}$ the congruence map $C_{\bS}:{\mathbb S}^n_{+} \rightarrow {\mathbb S}^n_{+}$ given by $C_{\bS}(\bM) = \bM\bS\bM$, is bijective and its inverse is given by $C^{-1}_{\bS}(\bT)=\bA(\bS, \bT)$. In particular $\bA^{-1}(\bT,\bS)=\bA(\bS,\bT)$.
\end{proposition}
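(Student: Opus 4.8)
The plan is to verify directly that $\bA(\bS,\bT)$, as defined in \eqref{AST}, is a two-sided inverse for the congruence map $C_{\bS}$. First I would check that $\bA(\bS,\bT) \in {\mathbb S}^n_{++}$ whenever $\bT \in {\mathbb S}^n_{++}$: the matrix $\bS^{1/2}\bT\bS^{1/2}$ is positive definite as a congruence of $\bT$, hence has a well-defined positive definite square root, and conjugating by $\bS^{-1/2}$ keeps it in ${\mathbb S}^n_{++}$. More generally, for $\bT \in {\mathbb S}^n_{+}$ the same computation shows $\bA(\bS,\bT) \in {\mathbb S}^n_{+}$, so $\bA(\bS,\cdot)$ maps ${\mathbb S}^n_{+}$ into itself and the statement makes sense on all of ${\mathbb S}^n_{+}$.

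Next I would compute $C_{\bS}(\bA(\bS,\bT))$ by substitution. Writing $\bA = \bS^{-1/2}\bR\bS^{-1/2}$ with $\bR := (\bS^{1/2}\bT\bS^{1/2})^{1/2}$, one gets
\[
C_{\bS}(\bA) = \bA\bS\bA = \bS^{-1/2}\bR\bS^{-1/2}\,\bS\,\bS^{-1/2}\bR\bS^{-1/2} = \bS^{-1/2}\bR^2\bS^{-1/2} = \bS^{-1/2}\bS^{1/2}\bT\bS^{1/2}\bS^{-1/2} = \bT,
\]
so $C_{\bS} \circ \bA(\bS,\cdot) = \id$ on ${\mathbb S}^n_{+}$. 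For the other direction, given $\bM \in {\mathbb S}^n_{+}$ set $\bT := C_{\bS}(\bM) = \bM\bS\bM$; I must show $\bA(\bS,\bT) = \bM$. Plugging in, $\bS^{1/2}\bT\bS^{1/2} = \bS^{1/2}\bM\bS\bM\bS^{1/2} = (\bS^{1/2}\bM\bS^{1/2})^2$, and since $\bS^{1/2}\bM\bS^{1/2} \in {\mathbb S}^n_{+}$ is symmetric positive semidefinite, it equals the (unique) positive semidefinite square root of its own square, giving $(\bS^{1/2}\bT\bS^{1/2})^{1/2} = \bS^{1/2}\bM\bS^{1/2}$; conjugating by $\bS^{-1/2}$ yields $\bA(\bS,\bT) = \bM$. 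Hence $\bA(\bS,\cdot) \circ C_{\bS} = \id$ as well, so $C_{\bS}$ is bijective with inverse $\bA(\bS,\cdot)$.

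Finally, the identity $\bA^{-1}(\bT,\bS) = \bA(\bS,\bT)$ follows formally: $\bA(\bT,\bS) = C_{\bT}^{-1}(\bS)$ is the unique matrix $\bB$ with $\bB\bT\bB = \bS$, and inverting this relation gives $\bB^{-1}\bS\bB^{-1} = \bT$, i.e. $\bB^{-1} = C_{\bS}^{-1}(\bT) = \bA(\bS,\bT)$; one should also note $\bA(\bT,\bS)$ is invertible since it lies in ${\mathbb S}^n_{++}$ when $\bS,\bT \in {\mathbb S}^n_{++}$. Alternatively, one can just multiply the explicit formulas for $\bA(\bS,\bT)$ and $\bA(\bT,\bS)$ and simplify using $(\bT^{1/2}\bS\bT^{1/2})^{1/2}$-manipulations, but the inverse-of-congruence argument is cleaner. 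The only subtle point throughout is the uniqueness of positive semidefinite square roots (so that $(\bX^2)^{1/2} = \bX$ for $\bX \in {\mathbb S}^n_{+}$), which is what makes every substitution above collapse correctly; this is the step I would state carefully rather than the bulk of the algebra.
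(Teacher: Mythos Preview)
Your proof is correct and follows essentially the same approach as the paper: both verify the two-sided inverse identities via the key algebraic observation $\bS^{1/2}\bM\bS\bM\bS^{1/2}=(\bS^{1/2}\bM\bS^{1/2})^2$ together with uniqueness of positive semidefinite square roots. The only cosmetic difference is that for the final identity $\bA^{-1}(\bT,\bS)=\bA(\bS,\bT)$ the paper plugs in the explicit formula $\bA^{-1}(\bT,\bS)=\bT^{1/2}(\bT^{1/2}\bS\bT^{1/2})^{-1/2}\bT^{1/2}$ and checks $C_{\bS}$ of it equals $\bT$, whereas you invert the defining congruence relation; both are immediate once bijectivity of $C_{\bS}$ is in hand.
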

\begin{proof}
Note that the image of $C_{\bS}$ is always a positive semi-definite matrix.  For given $\bT \in {\mathbb S}^n_{+}$, let us solve $C_{\bS}(\bM)=\bT$, that is, solve $\bM\bS\bM=\bT$ for $\bM$. Since $\bS \in {\mathbb S}^n_{++}$ we can rewrite the previous equation as 
$$\bS^{1/2}\bT \bS^{1/2}=\bS^{1/2}\bM\bS\bM\bS^{1/2}=\bS^{1/2}\bM\bS^{1/2}\bS^{1/2}\bM\bS^{1/2}=(\bS^{1/2}\bM\bS^{1/2})^2.$$ 
Since $\bS^{1/2}\bT \bS^{1/2} \in {\mathbb S}^n_{+}$ we may take roots on either end. Solving for $\bM$ then gives 
$$\bM=\bS^{-1/2}(\bS^{1/2}\bT \bS^{1/2})^{1/2}\bS^{-1/2} = \bA(\bS,\bT) \in {\mathbb S}^n_{+}.$$ 
That the map is bijective follows from $C_{\bS}\circ C^{-1}_{\bS}(\bT)=C_{\bS}(\bA(\bS,\bT))=\bT$ and $C^{-1}_{\bS}\circ C_{\bS}(\bM)=A(\bS, \bM \bS \bM)=\bM$.  
The last identity follows from $\bS^{1/2} \bM \bS \bM \bS^{1/2} =(\bS^{1/2} \bM \bS^{1/2})^2$. 

For the last statement, with $\bA^{-1}(\bT,\bS)=\bT^{1/2}(\bT^{1/2} \bS \bT^{1/2})^{-1/2}\bT^{1/2}$ one easily verifies that $C_{\bS}(\bA^{-1}(\bT,\bS))=\bT$, because $C_{\bS}$ is a bijection the claim follows. 
\end{proof}

Given two probabilistic frames $\mu, \nu$ with frame operators  ${\bf S}_{\mu}$ and ${\bf S}_{\nu}$ 
let us write $ {\bf A}_{\mu, \nu} := {\bf A}({\bf S}_{\mu}, {\bf S}_{\nu})$. 
Recall, the {\em center of mass} or {\em mean} of a measure $\mu$ is the vector  
${\bf m}_{\mu}=\int_{\R^n} {\bf v}\  d\mu({\bf v})$. Then the {\em centered measure} of $\mu$ is given by 
$\overline{\mu}(A):=\mu(A+{\bf m}_{\mu})$ for any Borel set $A$.
Recall the covariance matrix of $\mu$ is given by ${\bf \Sigma}_{\mu}=\bS_{\overline{\mu}}$. Note, ${\bf \Sigma}_{\mu}$ is not necessarily invertible, i.e. $\bS_{\overline{\mu}}$ is not necessarily definite. 
In particular, a centered probabilistic frame may not be a probabilistic frame. In this case  ${\bf \bS}_{\mu}^{-1/2}$, respectively 
${\bf \Sigma}_{\mu}^{-1/2}$, is defined as a Moore-Penrose inverse. 
If ${\bf \Pi}_{\mu}$ is the (matrix version of the) orthogonal projection onto $\im\ \bS_{\mu}$, then the Moore-Penrose inverse has the property  $ {\bf \Pi}_{\mu}=\bS_{\mu}\bS^{-1}_{\mu}=\bS^{-1}_{\mu}\bS_{\mu}$. With that in mind we have $$ {\bf A}_{\overline{\mu},\overline{\nu}} = {\bf A}({\bf \Sigma}_{\mu}, {\bf \Sigma}_{\nu})= {\bf \Sigma}_{\mu}^{-1/2}({\bf \Sigma}_{\mu}^{1/2}{\bf \Sigma}_{\nu}{\bf \Sigma}_{\mu}^{1/2})^{1/2}{\bf \Sigma}_{\mu}^{-1/2}.$$ 
A special case of the first part of the following formula appeared in \cite{loukili2020minimization}. 
\begin{lemma}\label{Lemma:A_properties}
Let $\mu, \nu \in \cP_2(\R^n)$, not necessarily frames, then: 
\begin{enumerate}
    \item\   If $ \bS \in {\mathbb S}^n_{+}$ then ${\bf A}_{\mu, {\bf S}_{\#}\mu} = {\bf \Pi}_{\mu}{\bf S}{\bf \Pi}_{\mu}$, 
    and if $\mu$ is a frame then ${\bf A}_{\mu, {\bf S}_{\#}\mu} = \bS$. \label{lemma:push} 
    \vspace{1mm}
    \item\  If $\nu=({\bf A}_{\mu, \nu})_{\#}\mu$, then $({\bf \Pi}_{\overline{\mu}})_{\#}\overline{\nu}=({\bf A}_{\overline{\mu}, \overline{\nu}})_{\#}\overline{\mu}$. \label{lemma:cent_opt_cond}
\end{enumerate}
\end{lemma}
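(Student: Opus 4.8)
The plan is to establish the first claim by a direct computation with the closed form \eqref{AST} of $\bA$, and then to deduce the second claim by applying the first one to the centered measures $\overline{\mu},\overline{\nu}$.

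\emph{First claim.} The derivation of the transformation rule \eqref{fop-transform} never uses that $\mu$ is a frame, so for any $\bS\in{\mathbb S}^n_{+}$ the frame operator of $\bS_{\#}\mu$ is $\bS_{\bS_{\#}\mu}=\bS\,\bS_\mu\,\bS^{t}=\bS\bS_\mu\bS$. Substituting this into \eqref{AST} gives $\bA_{\mu,\bS_{\#}\mu}=\bA(\bS_\mu,\bS\bS_\mu\bS)=\bS_\mu^{-1/2}\bigl(\bS_\mu^{1/2}\bS\bS_\mu\bS\bS_\mu^{1/2}\bigr)^{1/2}\bS_\mu^{-1/2}$, where $\bS_\mu^{-1/2}$ is read as a Moore--Penrose root. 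The algebraic point is that $\bS_\mu^{1/2}\bS\bS_\mu\bS\bS_\mu^{1/2}=\bigl(\bS_\mu^{1/2}\bS\bS_\mu^{1/2}\bigr)^{2}$, and that $\bS_\mu^{1/2}\bS\bS_\mu^{1/2}$ is positive semidefinite because $\bS\ge 0$; hence its principal square root is itself and $\bigl(\bS_\mu^{1/2}\bS\bS_\mu\bS\bS_\mu^{1/2}\bigr)^{1/2}=\bS_\mu^{1/2}\bS\bS_\mu^{1/2}$. Feeding this back and using the Moore--Penrose identities $\bS_\mu^{-1/2}\bS_\mu^{1/2}=\bS_\mu^{1/2}\bS_\mu^{-1/2}={\bf \Pi}_{\mu}$ (the orthogonal projection onto $\im\bS_\mu$) yields $\bA_{\mu,\bS_{\#}\mu}={\bf \Pi}_{\mu}\bS{\bf \Pi}_{\mu}$. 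If $\mu$ is a frame then $\bS_\mu$ is invertible, ${\bf \Pi}_{\mu}=\id$, and the formula collapses to $\bA_{\mu,\bS_{\#}\mu}=\bS$.

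\emph{Second claim.} Suppose $\nu=\bA_{\#}\mu$ with $\bA:=\bA_{\mu,\nu}$. Integrating the identity $\nu=\bA_{\#}\mu$ gives ${\bf m}_\nu=\bA\,{\bf m}_\mu$, so the map ${\bf x}\mapsto\bA{\bf x}-{\bf m}_\nu$ coincides with ${\bf x}\mapsto\bA({\bf x}-{\bf m}_\mu)$; pushing $\mu$ forward accordingly gives $\overline{\nu}=\bA_{\#}\overline{\mu}$. Moreover $\bA\in{\mathbb S}^n_{+}$, since it has the form $\bS_\mu^{-1/2}C\bS_\mu^{-1/2}$ with $C=(\bS_\mu^{1/2}\bS_\nu\bS_\mu^{1/2})^{1/2}\ge 0$. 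Now apply the first claim to the measure $\overline{\mu}$ and the matrix $\bA$: since $\bS_{\overline{\mu}}=\bSig_{\mu}$ and $\bA_{\#}\overline{\mu}=\overline{\nu}$ has frame operator $\bS_{\overline{\nu}}=\bSig_{\nu}$, we obtain $\bA_{\overline{\mu},\overline{\nu}}={\bf \Pi}_{\overline{\mu}}\,\bA\,{\bf \Pi}_{\overline{\mu}}$. Finally $\supp\overline{\mu}\subseteq\im\bSig_{\mu}$: if ${\bf v}\perp\im\bSig_{\mu}$ then $\int\langle{\bf v},{\bf x}\rangle^{2}\,d\overline{\mu}({\bf x})={\bf v}^{t}\bSig_{\mu}{\bf v}=0$, so $\overline{\mu}$ is carried by ${\bf v}^{\perp}$, and intersecting over a basis of $(\im\bSig_{\mu})^{\perp}$ gives the inclusion. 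Hence ${\bf \Pi}_{\overline{\mu}}$ restricts to the identity on $\supp\overline{\mu}$, so $({\bf \Pi}_{\overline{\mu}})_{\#}\overline{\mu}=\overline{\mu}$, and composing push-forwards, $(\bA_{\overline{\mu},\overline{\nu}})_{\#}\overline{\mu}=({\bf \Pi}_{\overline{\mu}}\bA{\bf \Pi}_{\overline{\mu}})_{\#}\overline{\mu}=({\bf \Pi}_{\overline{\mu}}\bA)_{\#}\overline{\mu}=({\bf \Pi}_{\overline{\mu}})_{\#}(\bA_{\#}\overline{\mu})=({\bf \Pi}_{\overline{\mu}})_{\#}\overline{\nu}$, which is the assertion.

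The main obstacle is the bookkeeping with Moore--Penrose inverses in the possibly non-definite case, concretely the two facts $\bigl(\bS_\mu^{1/2}\bS\bS_\mu\bS\bS_\mu^{1/2}\bigr)^{1/2}=\bS_\mu^{1/2}\bS\bS_\mu^{1/2}$ and $\bS_\mu^{\pm 1/2}\bS_\mu^{\mp 1/2}={\bf \Pi}_{\mu}$, together with the observation that a centered measure is supported on $\im\bSig_{\mu}$, which is exactly what makes ${\bf \Pi}_{\overline{\mu}}$ act trivially under push-forward. Once these are in hand both parts are short and otherwise routine.
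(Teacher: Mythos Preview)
Your proof is correct and follows essentially the same route as the paper: for (1) you use the identity $\bS_\mu^{1/2}\bS\bS_\mu\bS\bS_\mu^{1/2}=(\bS_\mu^{1/2}\bS\bS_\mu^{1/2})^2$ together with the Moore--Penrose relation $\bS_\mu^{-1/2}\bS_\mu^{1/2}={\bf \Pi}_\mu$, and for (2) you reduce to (1) applied to $\overline{\mu}$ and $\bA=\bA_{\mu,\nu}$, exactly as the paper does. Your write-up is in fact a bit more explicit than the paper's, since you spell out why $\bA_{\mu,\nu}\in{\mathbb S}^n_{+}$ and why $({\bf \Pi}_{\overline{\mu}})_\#\overline{\mu}=\overline{\mu}$ via the support inclusion $\supp\overline{\mu}\subseteq\im\bSig_\mu$.
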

\begin{proof}
For the first statement, since $ {\bf S}^{1/2}_{\mu}{\bf S} {\bf S}_{\mu} {\bf S} {\bf S}^{1/2}_{\mu}= ({\bf S}^{1/2}_{\mu}{\bf S} {\bf S}_{\mu}^{1/2})^2$, $\bS^{1/2}_{\mu}$ symmetric and  $\text{Image } \bS_{\mu} = \text{Image } \bS^{1/2}_{\mu}$, we have 
$${\bf A}_{\mu, {\bf S}_{\#}\mu}={\bf S}^{-1/2}_{\mu}({\bf S}^{1/2}_{\mu}{\bf S} {\bf S}_{\mu} {\bf S} {\bf S}^{1/2}_{\mu})^{1/2}{\bf S}^{-1/2}_{\mu}= {\bf S}_{\mu}^{-1/2}{\bf S}^{1/2}_{\mu}{\bf S} {\bf S}_{\mu}^{1/2}{\bf S}_{\mu}^{-1/2} = {\bf \Pi}_{\mu}{\bf S}{\bf \Pi}_{\mu}.
$$
If $\mu$ is a frame, then $ \bS_{\mu} \in {\mathbb S}^n_{++}$, hence ${\bf \Pi}_{\mu}={\bf \id}$.

 
For the second identity, recall that $\overline{({\bf A}_{\mu, \nu})_{\#}\mu}=({\bf A}_{\mu, \nu})_{\#}\overline{\mu}$. Including $ ({\bf \Pi}_{\overline{\mu}})_{\#}\overline{\mu}=\overline{\mu} $ and the previous formula we get  
\[
\begin{split}
 ({\bf A}_{\overline{\mu}, \overline{ \nu}})_{\#}\overline{\mu}&=({\bf A}_{\overline{\mu}, (\overline{ {\bf A}_{\mu, \nu} )_{\#} \mu}})_{\#}\overline{\mu} = ({\bf \Pi}_{\overline{\mu}}{\bf A}_{\mu, \nu}{\bf \Pi}_{\overline{\mu}})_{\#}\overline{\mu}\\ 
 =&({\bf \Pi}_{\overline{\mu}})_{\#}({\bf A}_{\mu, \nu})_{\#}({\bf \Pi}_{\overline{\mu}})_{\#}\overline{\mu}= ({\bf \Pi}_{\overline{\mu}})_{\#}({\bf A}_{\mu, \nu})_{\#}\overline{\mu}=
 ({\bf \Pi}_{\overline{\mu}})_{\#} \overline{({\bf A}_{\mu, \nu})_{\#}\mu}.   
\end{split}
\] 
 
\end{proof}
Statement \ref{lemma:cent_opt_cond} of Lemma \ref{Lemma:A_properties} is to be expected as it verifies that the equality 
condition $\nu=({\bf A}_{\mu, \nu})_{\#}\mu$ for the respective Wasserstein distance estimates in the next Proposition \ref{prop:wasserstein_upper_est_and_eq_cond} and Proposition \ref{prop_universal_lin_push} below imply the equality condition for the respective estimate  
after centering the measures; $({\bf \Pi}_{\overline{\mu}})_{\#}\overline{\nu}=({\bf A}_{\overline{\mu}, \overline{\nu}})_{\#}\overline{\mu}$. 
See the respective estimates of \cite{Gelbrich90} and \cite{cuesta1996lower} using covariance operators.

\begin{proposition}\label{prop:wasserstein_upper_est_and_eq_cond}
For any unit vector ${\bf x}$ we have 
\begin{equation}
     W^2_2( (\pi_{{\bf x}})_{\#}\mu, (\pi_{{\bf x}})_{\#}\nu) \geq \left(W_2(\mu, (\pi_{ {\bf x}^{\perp}})_{\#}\mu) - W_2(\nu, (\pi_{{\bf x}^{\perp}})_{\#}\nu) \right)^{2}
\end{equation}
and if $\{{\bf e}_1,...,{\bf e}_n\}$ is an orthonormal basis then 
\begin{equation}\label{prop:est:was:gen_proj_rel}
    W^2_2(\mu,\nu)\geq \sum^n_{i=1} \left(W_2(\mu, (\pi_{{{\bf e}_i}^{\perp}})_{\#}\mu) - W_2(\nu, (\pi_{{{\bf e}_i}^{\perp}})_{\#}\nu) \right)^{2}, 
\end{equation}
equality holds if $\nu=\bT_{\#}\mu$ where $\bT \in {\mathbb S}^n_{+}$ diagonal with respect to $\{{\bf e}_i\}$.  
\end{proposition}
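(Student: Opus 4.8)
The plan is to handle the three assertions separately. The one-dimensional inequality will follow from the reverse form of the triangle inequality for $W_2$ combined with identity~\ref{delta-wasser}; the inequality~\ref{prop:est:was:gen_proj_rel} will follow by projecting an arbitrary coupling of $\mu$ and $\nu$ onto each of the coordinate axes ${\bf e}_i$; and the equality case will be a short computation that uses, besides the transformation rule~\ref{fop-transform} for frame operators, the fact that a positive semi-definite linear map is an optimal transport map. To begin, $(\pi_{{\bf x}})_{\#}\mu$, $(\pi_{{\bf x}})_{\#}\nu$ and $\delta_{{\bf 0}}$ all have finite second moment, since orthogonal projection does not increase it, so the reverse form of the triangle inequality for $W_2$ gives
\[
W_2\bigl((\pi_{{\bf x}})_{\#}\mu,(\pi_{{\bf x}})_{\#}\nu\bigr)\ \geq\ \bigl|\,W_2\bigl((\pi_{{\bf x}})_{\#}\mu,\delta_{{\bf 0}}\bigr)-W_2\bigl((\pi_{{\bf x}})_{\#}\nu,\delta_{{\bf 0}}\bigr)\,\bigr|.
\]
Rewriting each term on the right via identity~\ref{delta-wasser} and squaring yields the first displayed inequality of the proposition.

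For~\ref{prop:est:was:gen_proj_rel} I would fix any $\gamma\in\Gamma(\mu,\nu)$ and expand $\|{\bf v}-{\bf w}\|^2=\sum_{i=1}^{n}\langle{\bf v}-{\bf w},{\bf e}_i\rangle^2$. For each $i$, pushing $\gamma$ forward under the map $({\bf v},{\bf w})\mapsto(\langle{\bf v},{\bf e}_i\rangle,\langle{\bf w},{\bf e}_i\rangle)$ yields a coupling of the one-dimensional measures $(\pi_{{\bf e}_i})_{\#}\mu$ and $(\pi_{{\bf e}_i})_{\#}\nu$, so $\int\langle{\bf v}-{\bf w},{\bf e}_i\rangle^2\,d\gamma\geq W^2_2\bigl((\pi_{{\bf e}_i})_{\#}\mu,(\pi_{{\bf e}_i})_{\#}\nu\bigr)$. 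Summing over $i$ and then taking the infimum over $\gamma\in\Gamma(\mu,\nu)$ gives $W^2_2(\mu,\nu)\geq\sum_{i=1}^{n}W^2_2\bigl((\pi_{{\bf e}_i})_{\#}\mu,(\pi_{{\bf e}_i})_{\#}\nu\bigr)$, and applying the first inequality to each summand produces~\ref{prop:est:was:gen_proj_rel}.

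For the equality statement, write $\bT{\bf e}_i=t_i{\bf e}_i$ with $t_i\geq 0$. Since $\bT$ is the gradient of the convex function ${\bf v}\mapsto\tfrac12\langle{\bf v},\bT{\bf v}\rangle$, its graph is cyclically monotone, hence $\bT$ realizes optimal transport from $\mu$ to $\nu=\bT_{\#}\mu$ and $W^2_2(\mu,\nu)=\int\|{\bf v}-\bT{\bf v}\|^2\,d\mu=\sum_{i=1}^{n}(1-t_i)^2\langle{\bf e}_i,\bS_{\mu}{\bf e}_i\rangle$. By~\ref{fop-transform} we have $\bS_{\nu}=\bT\bS_{\mu}\bT$, so $\langle{\bf e}_i,\bS_{\nu}{\bf e}_i\rangle=t_i^2\langle{\bf e}_i,\bS_{\mu}{\bf e}_i\rangle$, and therefore by~\ref{int:Wasserstein_Frame} $W_2\bigl(\nu,(\pi_{{\bf e}_i^{\perp}})_{\#}\nu\bigr)=t_i\,W_2\bigl(\mu,(\pi_{{\bf e}_i^{\perp}})_{\#}\mu\bigr)$. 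Substituting these into the right-hand side of~\ref{prop:est:was:gen_proj_rel} turns it into $\sum_{i=1}^{n}(1-t_i)^2\langle{\bf e}_i,\bS_{\mu}{\bf e}_i\rangle=W^2_2(\mu,\nu)$, so equality holds. I expect the only step that is not purely formal to be the optimality of the possibly degenerate map $\bT$ for an arbitrary $\mu\in\cP_2(\R^n)$; this rests on cyclical monotonicity of the graph of a gradient of a convex function, the same fact underlying Theorem~\ref{thm_main_1} and Proposition~\ref{prop_geod_act}, and it can alternatively be obtained by approximating $\bT$ by $\bT+\varepsilon\,\id\in{\mathbb S}^n_{++}$ and invoking Theorem~\ref{thm_main_1}.
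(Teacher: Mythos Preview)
Your proof is correct and follows essentially the same route as the paper's: project a coupling onto each coordinate axis to get $W^2_2(\mu,\nu)\geq\sum_i W^2_2((\pi_{{\bf e}_i})_\#\mu,(\pi_{{\bf e}_i})_\#\nu)$, then bound each one-dimensional term from below. The only stylistic differences are that for the one-dimensional bound you invoke the reverse triangle inequality for the metric $W_2$ together with identity~\ref{delta-wasser}, whereas the paper writes out the equivalent $L^2$ reverse triangle inequality on an optimal coupling directly; and for the equality case you compute both sides of~\ref{prop:est:was:gen_proj_rel} explicitly using optimality of $\bT$ and the transformation rule~\ref{fop-transform}, whereas the paper traces back the equality conditions in Cauchy--Schwarz and in the projection estimate. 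Both variants are straightforward and yield the same conclusion.
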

\begin{proof}
Abbreviating $\Gamma:=\Gamma(\mu, \nu)$ one has  
\begin{equation}\label{was_proj_est} 
\begin{split}
W^2_2(\mu,\nu)=\inf_{\gamma \in \Gamma}\int_{\R^n\times \R^n} \|{\bf x}-{\bf y}\|^2\ d\gamma = \inf_{\gamma \in \Gamma}\sum^n_{i=1}\int_{\R^n\times \R^n} |x_i-y_i|^2\ d\gamma\\ = \inf_{\gamma \in \Gamma}\sum^n_{i=1} \int_{\R \times \R} |x-y|^2\ d(\pi_{{\bf e}_i} \times \pi_{{\bf e}_i})_{\#}\gamma \geq \sum^n_{i=1} W^2_2( (\pi_{{\bf e}_i})_{\#}\mu, (\pi_{{\bf e}_i})_{\#}\nu).
\end{split}
\end{equation}
Now for any unit vector ${\bf z} \in S^{n-1}$ if $\gamma_{{\bf z}} \in \Gamma ( (\pi_{{\bf z}})_{\#}\mu, (\pi_{{\bf z}})_{\#}\nu)$ minimizes $W^2_2( (\pi_{{\bf z}})_{\#}\mu, (\pi_{{\bf z}})_{\#}\nu)$  then, by the reverse triangle inequality (in $L^2$): 
\begin{equation}\label{was_ltwo_est}
\begin{split} W^2_2( (\pi_{{\bf z}})_{\#}\mu, (\pi_{{\bf z}})_{\#}\nu)& =\int_{\R \times \R} |x-y|^2 \  d\gamma_{{\bf z}}\\
 \geq & \left(\left(\int_{\R} |x|^2\ d(\pi_{{\bf z}})_{\#}\mu\right)^{1/2} - \left(\int_{\R} |y|^2\ d (\pi_{{\bf z}})_{\#}\nu\right)^{1/2}\right)^{2}\\
=& \left(\left(\int_{\R^n} \langle {\bf x},{\bf z} \rangle^2\ d\mu\right)^{1/2} - \left(\int_{\R^n} \langle {\bf y},{\bf z} \rangle^2 \ d \nu\right)^{1/2}\right)^{2}\\
=& \left(W_2(\mu, (\pi_{{{\bf z}}^{\perp}})_{\#}\mu) - W_2(\nu, (\pi_{{{\bf z}}^{\perp}})_{\#}\nu) \right)^{2}, 
\end{split}
\end{equation}
This shows the first inequality stated. Using the estimate for ${\bf z}={\bf e}_i$ we obtain further 
\[W^2_2(\mu,\nu)\geq \sum^n_{i=1}\left(W_2(\mu, (\pi_{{{\bf e}_i}^{\perp}})_{\#}\mu) - W_2(\nu, (\pi_{{{\bf e}_i}^{\perp}})_{\#}\nu) \right)^{2}.   \]
 
Expanding square terms in Inequality \ref{was_ltwo_est} and using the marginals of $\gamma_i$ we obtain the equivalent condition   
\[
\int_{\R \times \R} xy \  d\gamma_i \leq \left(\int_{\R} x^2\ d(\pi_{{\bf e}_i})_{\#}\mu\right)^{1/2} \left(\int_{\R} y^2\ d (\pi_{{\bf e}_i})_{\#}\nu \right)^{1/2}. 
\]
This is a version of the Cauchy-Schwarz inequality with respect to $\gamma_i$. In particular, this inequality is an equality if $y=\lambda_i x$ for some $\lambda_i \geq 0$ and if the marginal measures agree. In this case $\gamma_i$ is a push-forward given by $\gamma_i=(1 \times \lambda_i )_{\#}(\pi_{{\bf e}_i})_{\#}\mu$. This map is optimal and hence equalizes also \ref{was_proj_est} for the respective coordinate, since the optimality condition is the same.  
More precisely, taking optimal scalings in each coordinate we see a linear map $\bT$ that is diagonal with respect to ${\bf e}_i$ and    
has $\lambda_i\geq 0$ as the $i$-th diagonal entry, implies equality in \ref{was_proj_est}. In other words, the optimal coupling $\gamma$ is a linear push-forward, that is optimal in every direction ${\bf e}_i$. Any such linear map is positive semi-definite. Directions with $\lambda_i=0$ may appear.  
\end{proof}
Proposition \ref{prop:wasserstein_upper_est_and_eq_cond} allows us to show the continuity of the frame map directly using Wasserstein distances (for a different argument, see \cite{wickman2023gradient}). 
\begin{corollary}\label{cor_continuity_fr_map}
 The frame map $\mathcal{S}: \cP_2(\mathbb{R}^n) \rightarrow {\mathbb S}^n_+$ is continuous in the Wasserstein topology and in the weak-$\ast$ topology, on $\mathcal{P}_2(\R^n)$. More precisely  
 $ \|{\bf S}^{1/2}_{\mu}-{\bf S}^{1/2}_{\nu}\|_{op}  \leq  W_2(\mu,\nu) $ with respect to the operator norm $\|\cdot\|_{op}$.    
 In particular $ \|{\bf S}^{1/2}-{\bf T}^{1/2} \|_{op}  \leq  W_2(\cP_{\bS},\cP_{\bT})=d_W(\bS,\bT) $.
\end{corollary}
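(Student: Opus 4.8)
The plan is to obtain the operator-norm estimate by projecting onto lines and feeding the resulting one-dimensional inequality into Proposition~\ref{prop:wasserstein_upper_est_and_eq_cond}. First I would fix a unit vector ${\bf x}\in S^{n-1}$ and an optimal coupling $\gamma\in\Gamma(\mu,\nu)$. Pushing $\gamma$ forward by $\pi_{{\bf x}}\times\pi_{{\bf x}}$ gives a coupling of $(\pi_{{\bf x}})_{\#}\mu$ and $(\pi_{{\bf x}})_{\#}\nu$ with cost $\int|\langle{\bf v}-{\bf w},{\bf x}\rangle|^{2}\,d\gamma\le\int\|{\bf v}-{\bf w}\|^{2}\,d\gamma=W_2^{2}(\mu,\nu)$, so that $W_2(\mu,\nu)\ge W_2\bigl((\pi_{{\bf x}})_{\#}\mu,(\pi_{{\bf x}})_{\#}\nu\bigr)$ (this is also implicit in the chain~\eqref{was_proj_est}). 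Combining with the first inequality of Proposition~\ref{prop:wasserstein_upper_est_and_eq_cond} and the identity $W_2(\mu,(\pi_{{\bf x}^{\perp}})_{\#}\mu)=\sqrt{{\bf x}^{t}\bS_{\mu}{\bf x}}=\|\bS_{\mu}^{1/2}{\bf x}\|$ coming from~\eqref{int:Wasserstein_Frame},
\begin{align*}
W_2(\mu,\nu)\ \ge\ W_2\bigl((\pi_{{\bf x}})_{\#}\mu,(\pi_{{\bf x}})_{\#}\nu\bigr)
&\ \ge\ \bigl|\,W_2(\mu,(\pi_{{\bf x}^{\perp}})_{\#}\mu)-W_2(\nu,(\pi_{{\bf x}^{\perp}})_{\#}\nu)\,\bigr|\\
&\ =\ \bigl|\,\|\bS_{\mu}^{1/2}{\bf x}\|-\|\bS_{\nu}^{1/2}{\bf x}\|\,\bigr|.
\end{align*}
Since this holds for every ${\bf x}\in S^{n-1}$, passing to the supremum yields $W_2(\mu,\nu)\ge\sup_{\|{\bf x}\|=1}\bigl|\,\|\bS_{\mu}^{1/2}{\bf x}\|-\|\bS_{\nu}^{1/2}{\bf x}\|\,\bigr|$.

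What remains is to identify this supremum with $\|\bS_{\mu}^{1/2}-\bS_{\nu}^{1/2}\|_{op}$. It is useful to observe that ${\bf x}\mapsto\|\bS_{\mu}^{1/2}{\bf x}\|$ is exactly the support function of the solid ellipsoid bounded by the frame ellipsoid $\cE_{\mu}$, so the supremum above is the Hausdorff distance between the solid frame ellipsoids of $\mu$ and $\nu$; in particular $W_2(\mu,\nu)$ dominates that Hausdorff distance, and the goal becomes to show this Hausdorff distance is itself at least $\|\bS_{\mu}^{1/2}-\bS_{\nu}^{1/2}\|_{op}$ — equivalently, to exhibit one unit vector ${\bf x}$ with $\bigl|\,\|\bS_{\mu}^{1/2}{\bf x}\|-\|\bS_{\nu}^{1/2}{\bf x}\|\,\bigr|\ge\|\bS_{\mu}^{1/2}-\bS_{\nu}^{1/2}\|_{op}$.

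This last point is where I expect essentially all the difficulty to sit; the reductions above are routine. The naive choice — ${\bf x}$ a unit eigenvector ${\bf z}$ of the symmetric matrix $\bS_{\mu}^{1/2}-\bS_{\nu}^{1/2}$ realizing the operator norm — only gives $\bigl|\,\|\bS_{\mu}^{1/2}{\bf z}\|-\|\bS_{\nu}^{1/2}{\bf z}\|\,\bigr|\le\|(\bS_{\mu}^{1/2}-\bS_{\nu}^{1/2}){\bf z}\|=\|\bS_{\mu}^{1/2}-\bS_{\nu}^{1/2}\|_{op}$ by the reverse triangle inequality: the wrong direction, with equality forcing ${\bf z}$ to be a common eigenvector of $\bS_{\mu}$ and $\bS_{\nu}$. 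So the extremizing direction is in general not that eigenvector, and I would instead look for it via a Lagrange-multiplier analysis of ${\bf x}\mapsto\|\bS_{\mu}^{1/2}{\bf x}\|-\|\bS_{\nu}^{1/2}{\bf x}\|$ on $S^{n-1}$, or by perturbing ${\bf z}$ toward a direction nearly annihilated by one of the two square roots — this is the step that needs real care. Granting it, the corollary follows at once: $W_2(\mu_k,\mu)\to0$ forces $\|\bS_{\mu_k}^{1/2}-\bS_{\mu}^{1/2}\|_{op}\to0$, hence $\bS_{\mu_k}\to\bS_{\mu}$ since squaring is continuous on ${\mathbb S}^n_{+}$, which is continuity of $\mathcal{S}$ for $W_2$, and hence for the weak-$\ast$ topology on $\cP_2(\R^n)$ once one recalls that $W_2$-convergence there is weak-$\ast$ convergence together with convergence of second moments. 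The displayed ``in particular'' is then immediate: pick $\mu\in\cP_{\bS}$, $\nu\in\cP_{\bT}$ with $W_2(\mu,\nu)=W_2(\cP_{\bS},\cP_{\bT})=d_W(\bS,\bT)$ and apply the bound with $\bS_{\mu}^{1/2}=\bS^{1/2}$, $\bS_{\nu}^{1/2}=\bT^{1/2}$.
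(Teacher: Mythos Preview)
You correctly isolate the crux: after the routine reduction you need
\[
\sup_{\|{\bf x}\|=1}\bigl|\,\|\bS_\mu^{1/2}{\bf x}\|-\|\bS_\nu^{1/2}{\bf x}\|\,\bigr|\ \ge\ \|\bS_\mu^{1/2}-\bS_\nu^{1/2}\|_{op},
\]
and you are right that plugging in an eigenvector of the difference yields the reverse inequality via the triangle inequality. You do not carry out the Lagrange--multiplier analysis you propose, so the quantitative operator-norm bound remains unproven in your write-up. (Continuity of $\mathcal S$ by itself is much cheaper --- each entry of $\bS_\mu$ is the integral of a quadratic and hence $W_2$-continuous --- but that sidesteps the displayed inequality, which is the actual content of the corollary.)

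The paper's route is different in form: rather than isolating one direction, it keeps the full sum from estimate~\eqref{prop:est:was:gen_proj_rel}, chooses $\{{\bf e}_i\}$ to diagonalize the difference of square roots so that $\|\bS_\mu^{1/2}-\bS_\nu^{1/2}\|_{op}^2$ is a single diagonal term, and then bounds it by $\sum_i\bigl(\langle{\bf e}_i,\bS_\mu^{1/2}{\bf e}_i\rangle-\langle{\bf e}_i,\bS_\nu^{1/2}{\bf e}_i\rangle\bigr)^2$. The paper then identifies this with $\sum_i\bigl(({\bf e}_i^t\bS_\mu{\bf e}_i)^{1/2}-({\bf e}_i^t\bS_\nu{\bf e}_i)^{1/2}\bigr)^2$, which is the right-hand side of~\eqref{prop:est:was:gen_proj_rel}. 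But that last identification uses $\langle{\bf e}_i,\bS_\mu^{1/2}{\bf e}_i\rangle=({\bf e}_i^t\bS_\mu{\bf e}_i)^{1/2}$, and this holds only when ${\bf e}_i$ is an eigenvector of $\bS_\mu$ itself, not merely of $\bS_\mu^{1/2}-\bS_\nu^{1/2}$. So the paper's displayed argument is clean exactly when $\bS_\mu$ and $\bS_\nu$ commute; in the noncommuting case it runs into the same obstruction you flagged, just one line later. In short: you located the genuine difficulty, and neither your sketch nor the paper's printed proof overcomes it without the commuting hypothesis.
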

\begin{proof}
 Take $\mu, \nu \in \cP_2(\mathbb{R}^n)$ with frame operators ${\bf S}_{\mu}$ and ${\bf S}_{\nu}$ respectively. 
 Let ${\bf x}$ be a unit vector, so that   
 $\sup_{\|\bf y\|=1} |{\bf y}^t({\bf S}^{1/2}_{\mu}-{\bf S}^{1/2}_{\nu}){\bf y}|=  | {\bf x}^t({\bf S}^{1/2}_{\mu}-{\bf S}^{1/2}_{\nu}){\bf x}|$.  
 Let $\{ {\bf e}_i\}$ be an orthonormal eigen-basis for ${\bf S}_{\mu}-{\bf S}_{\nu}$ and write ${\bf x} = \sum^n_{i=1} x_i {\bf e}_i$, then 
 \begin{equation*}
     \begin{split}
 \|{\bf S}^{1/2}_{\mu}-{\bf S}^{1/2}_{\nu}\|_{op}^2 = &\sup_{\|\bf y\|=1} |{\bf y}^t({\bf S}^{1/2}_{\mu}-{\bf S}^{1/2}_{\nu}){\bf y}|^2 \\ =& 
 \sum^n_{i=1}x^4_i({\bf e}^t_i({\bf S}^{1/2}_{\mu}-{\bf S}^{1/2}_{\nu}){\bf e}_i)^2  \leq 
 \sum^n_{i=1}(\langle{\bf e}_i,{\bf S}^{1/2}_{\mu}{\bf e}_i\rangle-\langle{\bf e}_i,{\bf S}^{1/2}_{\nu}{\bf e}_i\rangle)^2 \\
 =& \sum^n_{i=1}\left(W_2(\mu, (\pi_{{{\bf e}_i}^{\perp}})_{\#}\mu) - W_2(\nu, (\pi_{{{\bf e}_i}^{\perp}})_{\#}\nu) \right)^{2}
 \leq W^2_2(\mu,\nu).
 \end{split}
  \end{equation*}
 The last step is estimate \ref{prop:est:was:gen_proj_rel}. 
 We see $f(\mu):={\bf S}^{1/2}_{\mu}$ is continuous, hence $\mathcal{S}=f^2$ is continuous as well. The last statement is the definition of 
 $W_2(\cP_{\bS},\cP_{\bT})=d_W(\bS,\bT)$ in the introduction. 
 That shows the claim. 
\end{proof}

Recall that the p-th (central) moment $M_p(\mu)$ of a probability $\mu$ is given by $\int_{\R^n}\|x\|^p \  d \mu(x) $, if the integral is finite. 
Right from the definitions one easily confirms the well known formula 
\begin{equation}
    M_2(\mu)=\sum^n_{i=1} W^2_2(\mu, (\pi_{ {\bf e}_i^{\perp}})_{\#}\mu)= \tr\ {\bf S_{\mu}}
\end{equation}
for any orthonormal basis $\{\bf e_i\}$ of $\R^n$. 
Indeed 
$$
\tr\ {\bf S}_{\mu}=\sum^n_{i=1}\langle {\bf e}_i, {\bf S}_{\mu} {\bf e}_i\rangle =\sum^n_{i=1} \int_{\mathbb{R}^n}  \langle{\bf e}_i, {\bf v}\rangle^2  d\mu
=\int_{\mathbb{R}^n}  \|{\bf v}\|^2  d\mu({\bf v}) =M_2(\mu).
$$
The matrix version of the previous proposition gives Gelbrich's bound \cite{Gelbrich90} for frame operators. 
The proof is formally the same as Theorem 2.1 in \cite{cuesta1996lower}, we add it adapted to our conventions for convenience. 
\begin{corollary}[Gelbrich's bound \cite{Gelbrich90} for frame operators]\label{prop_lower_est}
Let $\mu,\nu \in \cP_{++}$ with respective frame operators ${\bf S}_{\mu}$ and ${\bf S}_{\nu}$, then  
\begin{equation}\label{matrix_ineq_lower_est}
    W^2_2(\mu,\nu)\geq \tr({\bf S}_{\mu}+{\bf S}_{\nu}-2({\bf S}_{\mu}^{1/2}{\bf S}_{\nu}{\bf S}_{\mu}^{1/2})^{1/2})= \tr\ {\bf S}_{\mu}({\bf \id}-\bA_{\mu, \nu})^2. 
\end{equation}
Equality holds if $\nu=(\bA_{\mu,\nu})_{\#}\mu$.
\end{corollary}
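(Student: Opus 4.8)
The plan is to feed the projection estimate of Proposition \ref{prop:wasserstein_upper_est_and_eq_cond} a \emph{cleverly chosen} orthonormal basis. Since $\mu,\nu\in\cP_{++}$, their frame operators $\bS_\mu,\bS_\nu$ are positive definite, so $\bA_{\mu,\nu}=\bS_\mu^{-1/2}(\bS_\mu^{1/2}\bS_\nu\bS_\mu^{1/2})^{1/2}\bS_\mu^{-1/2}$ is a genuine matrix in $\mathbb{S}^n_{++}$; in particular it is symmetric, hence admits an orthonormal eigenbasis $\{{\bf e}_1,\dots,{\bf e}_n\}$ with $\bA_{\mu,\nu}{\bf e}_i=\alpha_i{\bf e}_i$, $\alpha_i>0$. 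By \eqref{int:Wasserstein_Frame} one has $W_2(\mu,(\pi_{{\bf e}_i^{\perp}})_{\#}\mu)=({\bf e}_i^t\bS_\mu{\bf e}_i)^{1/2}$ and likewise for $\nu$, so Proposition \ref{prop:wasserstein_upper_est_and_eq_cond} gives
\[
W^2_2(\mu,\nu)\ \ge\ \sum_{i=1}^n\Bigl(({\bf e}_i^t\bS_\mu{\bf e}_i)^{1/2}-({\bf e}_i^t\bS_\nu{\bf e}_i)^{1/2}\Bigr)^2=\tr\bS_\mu+\tr\bS_\nu-2\sum_{i=1}^n\bigl(({\bf e}_i^t\bS_\mu{\bf e}_i)({\bf e}_i^t\bS_\nu{\bf e}_i)\bigr)^{1/2},
\]
where the last equality uses the trace identity $\tr\bS=\sum_i{\bf e}_i^t\bS{\bf e}_i$ recorded above.

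Next I would evaluate the cross terms. From Proposition \ref{prop_A_is_inverse} we have $\bS_\nu=\bA_{\mu,\nu}\bS_\mu\bA_{\mu,\nu}$, so using $\bA_{\mu,\nu}{\bf e}_i=\alpha_i{\bf e}_i$ and the symmetry of $\bA_{\mu,\nu}$, $\;{\bf e}_i^t\bS_\nu{\bf e}_i=(\bA_{\mu,\nu}{\bf e}_i)^t\bS_\mu(\bA_{\mu,\nu}{\bf e}_i)=\alpha_i^2\,{\bf e}_i^t\bS_\mu{\bf e}_i$. Hence each cross term equals $\alpha_i\,{\bf e}_i^t\bS_\mu{\bf e}_i$, and since $\{{\bf e}_i\}$ diagonalizes $\bA_{\mu,\nu}$, their sum is $\sum_i\alpha_i\,{\bf e}_i^t\bS_\mu{\bf e}_i=\tr(\bA_{\mu,\nu}\bS_\mu)$. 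Finally $\bS_\mu^{1/2}\bA_{\mu,\nu}\bS_\mu^{1/2}=(\bS_\mu^{1/2}\bS_\nu\bS_\mu^{1/2})^{1/2}$ together with cyclicity of the trace gives $\tr(\bA_{\mu,\nu}\bS_\mu)=\tr(\bS_\mu^{1/2}\bS_\nu\bS_\mu^{1/2})^{1/2}$. Substituting back turns the displayed bound into $W^2_2(\mu,\nu)\ge\tr(\bS_\mu+\bS_\nu-2(\bS_\mu^{1/2}\bS_\nu\bS_\mu^{1/2})^{1/2})$, the asserted inequality \eqref{matrix_ineq_lower_est}. The remaining algebraic identity $\tr(\bS_\mu+\bS_\nu-2(\bS_\mu^{1/2}\bS_\nu\bS_\mu^{1/2})^{1/2})=\tr\bS_\mu(\id-\bA_{\mu,\nu})^2$ is then a one-line check: expand $\tr\bS_\mu(\id-\bA_{\mu,\nu})^2=\tr\bS_\mu-2\tr(\bS_\mu\bA_{\mu,\nu})+\tr(\bA_{\mu,\nu}\bS_\mu\bA_{\mu,\nu})$ and use $\bA_{\mu,\nu}\bS_\mu\bA_{\mu,\nu}=\bS_\nu$ and $\tr(\bS_\mu\bA_{\mu,\nu})=\tr(\bS_\mu^{1/2}\bS_\nu\bS_\mu^{1/2})^{1/2}$ once more.

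For the equality claim, observe that the chosen basis $\{{\bf e}_i\}$ diagonalizes $\bA_{\mu,\nu}$, so if $\nu=(\bA_{\mu,\nu})_{\#}\mu$ the equality clause of Proposition \ref{prop:wasserstein_upper_est_and_eq_cond} (applied with $\bT=\bA_{\mu,\nu}\in\mathbb{S}^n_+$, which is diagonal with respect to $\{{\bf e}_i\}$) shows that the displayed inequality is in fact an equality; combined with the identities just established this reads $W^2_2(\mu,\nu)=\tr\bS_\mu(\id-\bA_{\mu,\nu})^2$. Alternatively, one can simply evaluate the cost of the coupling $(\id\times\bA_{\mu,\nu})_{\#}\mu\in\Gamma(\mu,\nu)$, namely $\int\|(\id-\bA_{\mu,\nu}){\bf x}\|^2\,d\mu=\tr(\bS_\mu(\id-\bA_{\mu,\nu})^2)$, and combine it with the lower bound, noting that the $\bA$-matrix of the pair $\mu,(\bA_{\mu,\nu})_{\#}\mu$ is again $\bA_{\mu,\nu}$ since the frame operators agree, cf. Lemma \ref{Lemma:A_properties}.

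I do not anticipate a real obstacle; this is the matrix form of Proposition \ref{prop:wasserstein_upper_est_and_eq_cond} and runs parallel to Theorem 2.1 in \cite{cuesta1996lower}. The one step that must be got right is the choice of basis: it has to be an eigenbasis of $\bA_{\mu,\nu}$ (not of $\bS_\mu$ or $\bS_\nu$), for only then do the mixed terms $(({\bf e}_i^t\bS_\mu{\bf e}_i)({\bf e}_i^t\bS_\nu{\bf e}_i))^{1/2}$ collapse to the diagonal entries of $\bA_{\mu,\nu}\bS_\mu$ and reassemble into $\tr(\bS_\mu^{1/2}\bS_\nu\bS_\mu^{1/2})^{1/2}$; after that it is bookkeeping of trace identities, kept clean by the positive definiteness of $\bS_\mu$. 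A fully equivalent route would be to write $W^2_2(\mu,\nu)=\tr\bS_\mu+\tr\bS_\nu-2\sup_{\gamma\in\Gamma(\mu,\nu)}\tr\!\int{\bf x}{\bf y}^t\,d\gamma$ and bound the cross-second-moment matrix via a Schur-complement argument on the positive semidefinite block matrix $\bS_\gamma$ of \eqref{cond:fix_offdiagoal_coup}, but the basis-choice argument above is shorter given that Proposition \ref{prop:wasserstein_upper_est_and_eq_cond} is already in hand.
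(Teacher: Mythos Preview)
Your proof is correct and follows essentially the same approach as the paper: both apply Proposition \ref{prop:wasserstein_upper_est_and_eq_cond} with an orthonormal eigenbasis of $\bA_{\mu,\nu}$, use $\bS_\nu=\bA_{\mu,\nu}\bS_\mu\bA_{\mu,\nu}$ from Proposition \ref{prop_A_is_inverse} to collapse the cross terms, and then rewrite everything via the trace identity $\tr(\bA_{\mu,\nu}\bS_\mu)=\tr(\bS_\mu^{1/2}\bS_\nu\bS_\mu^{1/2})^{1/2}$. Your closing remark that the basis must diagonalize $\bA_{\mu,\nu}$ (rather than $\bS_\mu$ or $\bS_\nu$) is exactly the point, and your alternative Schur-complement route is the Olkin--Pukelsheim approach the paper discusses separately.
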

\begin{proof}
Given Inequality \ref{prop:est:was:gen_proj_rel} of Proposition \ref{prop:wasserstein_upper_est_and_eq_cond}, the statement will follow from the formula 
\[ \tr\ ({\bf S}_{\mu}^{1/2}{\bf S}_{\nu}{\bf S}_{\mu}^{1/2})^{1/2}=\sum^n_{i=1}\langle {\bf e}_i, {\bf S}_{\nu} {\bf e}_i\rangle^{1/2} \langle {\bf e}_i, {\bf S}_{\mu} {\bf e}_i\rangle^{1/2} 
\]
for some orthogonal basis $\{ {\bf e}_i \}$ of $\R^n$. Note, that the right hand side of Inequality \ref{matrix_ineq_lower_est} immediately follows from the right hand side of Inequality \ref{prop:est:was:gen_proj_rel} using 
$ W_2(\mu, (\pi_{{e_i}^{\perp}})_{\#}\mu) = \langle {\bf e}_i, {\bf S}_{\mu} {\bf e}_i\rangle^{1/2}$ and the respective formula for $\nu$. 
By Proposition \ref{prop_A_is_inverse} there is a unique $\bA_{\mu, \nu}=\bA(\bS_{\mu},\bS_{\nu})$ positive definite, so that  $\bS_{\nu}= \bA_{\mu, \nu} \bS_{\mu}\bA_{\mu, \nu}$. Let $\{ {\bf e}_i \}$ be an eigen-basis for $\bA_{\mu, \nu}$ with corresponding set of 
(positive) eigenvalues $\{\lambda_i\}$, then:   
 $$
 \langle {\bf e}_i, {\bf S}_{\nu} {\bf e}_i\rangle= \langle {\bf e}_i, (\bA_{\mu, \nu} \ {\bf S}_{\mu} \bA_{\mu, \nu}){\bf e}_i\rangle = \langle \bA_{\mu, \nu} {\bf e}_i, {\bf S}_{\mu} \bA_{\mu, \nu} {\bf e}_i\rangle = 
\lambda_i^2 \langle {\bf e}_i, {\bf S}_{\mu} {\bf e}_i\rangle.  
$$
Taking roots on both sides and using $\bA_{\mu, \nu}= {\bf S}_{\mu}^{-1/2}({\bf S}_{\mu}^{1/2}{\bf S}_{\nu}{\bf S}_{\mu}^{1/2})^{1/2} {\bf S}_{\mu}^{-1/2}$ from Proposition \ref{prop_A_is_inverse}, formal properties of the trace give the sought identity:  
\[ 
\begin{split}
     \tr\ ({\bf S}_{\mu}^{1/2}{\bf S}_{\nu}{\bf S}_{\mu}^{1/2})^{1/2} =& \ \tr\ ({\bf S}_{\mu}^{1/2} \bA_{\mu, \nu} {\bf S}_{\mu}^{1/2})
= \tr\ ({\bf S}_{\mu} \bA_{\mu, \nu} )=\\ 
=& \sum^n_{i=1}\lambda_i \langle {\bf e}_i, {\bf S}_{\mu} {\bf e}_i\rangle=\sum^n_{i=1}\langle {\bf e}_i, {\bf S}_{\nu} {\bf e}_i\rangle^{1/2} \langle {\bf e}_i, {\bf S}_{\mu} {\bf e}_i\rangle^{1/2}. 
\end{split}
\]
Putting this identity one obtains the stated estimate as follows
\[
\begin{split}
 W^2_2(\mu,\nu)\geq \tr &({\bf S}_{\mu}+{\bf S}_{\nu}-2({\bf S}_{\mu}^{1/2}{\bf S}_{\nu}{\bf S}_{\mu}^{1/2})^{1/2})= 
\sum^n_{i=1}(1-\lambda_i)^2 \langle {\bf e}_i, {\bf S}_{\mu} {\bf e}_i\rangle 
\\ 
&=\sum^n_{i=1} \langle {\bf e}_i,({\bf \id}-\bA_{\mu, \nu}) {\bf S}_{\mu}({\bf \id}-\bA_{\mu, \nu}) {\bf e}_i\rangle
= \tr\ {\bf S}_{\mu}({\bf \id}-\bA_{\mu, \nu})^2.
\end{split}
\]
By Proposition \ref{prop:wasserstein_upper_est_and_eq_cond} equality holds, if $\nu=(\bA_{\mu,\nu})_{\#}\mu$. 
\end{proof}
\subsection{Olkin and Pukelsheim's matrix problem}
For the use in the next section we add the approach of Olkin and Pukelsheim version \cite{olkin1982distance} of the optimality problem above.  
Historically that was the first contribution to the matrix minimization problem and it added a condition on the 
matrices that can appear as frame operators of couplings that we will use later. \\

Given two probabilistic frames $\mu$ with frame operator $\bS$ and $\nu$ with frame operator $\bT$ respectively, then 
\begin{equation*}
\begin{split}
 W^2_2(\mu, \nu)=&\inf_{\gamma} \int_{\R^{2n}} \|{\bf x}-{\bf y} \|^2 d \gamma({\bf x},{\bf y})\\ =&\int_{\R^n} \|{\bf x}\|^2 \ d\mu + \int_{\R^n} \|{\bf y}\|^2 \ d\nu-  2 \sup_{\gamma} \int_{\R^{2n}} \langle {\bf x}, {\bf y}\rangle\ d\gamma({\bf x},{\bf y})   
\end{split}    
\end{equation*} 
The frame operator of $\gamma \in \Gamma(\mu, \nu)$ is given by  
${\bf S}_{\gamma}=\int_{\R^{2n}} ({\bf x}, {\bf y})\cdot ({\bf x}, {\bf y})^t\ d \gamma({\bf x}, {\bf y})$, written   
in block matrix form it is   
\begin{equation}\label{frame_op_coupling}
\bS_{\gamma}=\begin{bmatrix}
\bS & {\bf \Psi} \\
{\bf \Psi}^t & \bT 
\end{bmatrix}, \ \text{ where }   {\bf \Psi}=\int_{\R^{2n}} {\bf x}\cdot {\bf y}^t \ d\gamma({\bf x},{\bf y}). 
\end{equation}
Note, that 
\[\tr \int_{\R^{2n}} {\bf x}\cdot {\bf y}^t \ d\gamma({\bf x},{\bf y}) = \int_{\R^{2n}} \langle {\bf x}, {\bf y}\rangle \ d\gamma({\bf x},{\bf y}), \] 
so that the previous equation for the Wasserstein distance implies for any coupling $\gamma \in \Gamma(\mu, \nu)$: 
\begin{equation}\label{Wasserstein_trace_estimate}
 W^2_2(\mu, \mathcal{P}_{\bf T}) \leq \tr ({\bf S} + {\bf T} - 2 {\bf \Psi}). 
\end{equation}
The matrix optimization problem is that given $\bT$ and $\bS$ positive semi-definite, determine ${\bf \Psi}$ in  
$\bS_{\gamma}$, as given by Equation \ref{frame_op_coupling}, so that $\tr \ {\bf \Psi}$ is maximal under the constraint that $\bS_{\gamma}$ be  
positive semi-definite. We will see below that an extreme $\bf \Psi$ arises via a frame matrix of a coupling and  
determines the Wasserstein distance by turning estimate \ref{Wasserstein_trace_estimate} into an equality. 
The statement and solution of this problem was presented by Olkin and Pukelsheim in \cite{olkin1982distance} based on a dualizing argument. 
We start by presenting the argument from Lemma 1 in \cite{olkin1982distance} providing a condition on the off-diagonal of the block matrix \ref{frame_op_coupling} for the block matrix to be positive semi-definite. Namely, if $\bS, \bT \in \mathbb{S}^n_{++}$, then  
\begin{equation}\label{block-matrix}
\begin{bmatrix}
\bS & {\bf \Psi} \\
 {\bf \Psi}^t & \bT 
\end{bmatrix} \in \mathbb{S}^n_{+},  
\end{equation}
is, using matrix congruence, equivalent to 
\[\begin{bmatrix}
\id & -{\bf \Psi}\bT^{-1} \\
 0 & \id 
\end{bmatrix}
\begin{bmatrix}
\bS & {\bf \Psi} \\
 {\bf \Psi}^t & \bT 
\end{bmatrix}
\begin{bmatrix}
\id & 0 \\
-\bT^{-1} {\bf \Psi}^t & \id 
\end{bmatrix}
=
\begin{bmatrix}
\bS- {\bf \Psi} \bT^{-1} {\bf \Psi}^t & 0 \\
 0 & \bT 
\end{bmatrix}
 \in \mathbb{S}^n_{+}, 
\]
and using a similar congruence equivalent to
\[\begin{bmatrix}
\bS & 0 \\
 0 & \bT - {\bf \Psi}^t \bS^{-1} {\bf \Psi}
\end{bmatrix}
 \in \mathbb{S}^n_{+}.
\]
Hence the initial matrix  is positive semi-definite if and only if either $\bS- {\bf \Psi} \bT^{-1} {\bf \Psi}^t  \in \mathbb{S}^n_{+}$ or  
$\bT- {\bf \Psi}^t \bS^{-1} {\bf \Psi} \in \mathbb{S}^n_{+}$. Because of symmetry, we will discuss only the first condition below, even though we utilize the second one in the section on transport duals. 
Since the trace is a linear function it is extreme on the boundary of the convex set $\{{\bf \Psi}: \bS- {\bf \Psi} \bT^{-1} {\bf \Psi}^t \in \mathbb{S}^n_{+}\}$. Convexity is easy to check using frame matrices. The boundary is the set of ${\bf \Psi}$ so that $\bS={\bf \Psi} \bT^{-1} {\bf \Psi}^t$. This is algebraically equivalent to $\bA^t\bS\bA=\bT$ with $\bA=\bS^{-1}{\bf \Psi}$. Note that there are many solutions $\bA$ to the equation $\bA^t\bS\bA=\bT$. However, any push forward of a probabilistic frame in $\cP_{\bS}$ with $\bA^t \in {\rm GL}_n(\R)$ that solves $\bA^t\bS\bA=\bT$ is a probabilistic frame in $\cP_{\bT}$. But we know among those push-forwards the one that maximizes $\tr\ {\bf \Psi}=\tr\ \bS \bA$ is $\bA=\bA(\bS,\bT)$ by Gelbrich's Theorem. Let us summarize this discussion. 
\begin{corollary}\label{olkin_pukelsheim_semd_condition}
Assume $\bS, \bT \in \mathbb{S}^n_{++}$, then the $2n \times 2n$ block matrix given by Equation \ref{block-matrix} is positive semi-definite, 
if and only if $\bS^{-1}- \bA \bT^{-1} \bA^t \in \mathbb{S}^n_{+}$ where $\bA = \bS^{-1}{\bf \Psi}$, or alternatively 
$\bT^{-1}- \bA^t \bS^{-1} \bA \in \mathbb{S}^n_{+}$ where $\bA = {\bf \Psi}\bT^{-1}$. 
A push-forward of $\mu \in \cP_{\bS}$ with any $\bA^t \in {\rm GL}_n(\R)$ induces a coupling with a marginal in $\cP_{\bT}$ where $\bT= \bA^t \bS \bA$, 
or equivalently $\bS^{-1}= \bA \bT^{-1} \bA^t$. 
\end{corollary}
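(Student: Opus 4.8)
The plan is to prove Corollary \ref{olkin_pukelsheim_semd_condition} essentially by unwinding the congruence computation that immediately precedes it, translating the two equivalent conditions $\bS-{\bf \Psi}\bT^{-1}{\bf \Psi}^t\in\mathbb{S}^n_+$ and $\bT-{\bf \Psi}^t\bS^{-1}{\bf \Psi}\in\mathbb{S}^n_+$ into statements about $\bA$, and then verifying the final sentence about push-forwards using Equation \ref{fop-transform}. The key algebraic fact I would use repeatedly is that for an invertible matrix $\bS$ and positive semidefinite symmetric $\bX$, one has $\bX\in\mathbb{S}^n_+$ if and only if $\bS^{-1}\bX\bS^{-1}\in\mathbb{S}^n_+$ (congruence by $\bS^{-1}$ preserves the cone $\mathbb{S}^n_+$, since $\bS$, hence $\bS^{-1}$, is symmetric here).

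First I would recall from the discussion above the corollary that the $2n\times 2n$ block matrix \ref{block-matrix} lies in $\mathbb{S}^n_+$ if and only if $\bS-{\bf \Psi}\bT^{-1}{\bf \Psi}^t\in\mathbb{S}^n_+$ (and symmetrically iff $\bT-{\bf \Psi}^t\bS^{-1}{\bf \Psi}\in\mathbb{S}^n_+$). Then, substituting $\bA=\bS^{-1}{\bf \Psi}$, so that ${\bf \Psi}=\bS\bA$ and ${\bf \Psi}^t=\bA^t\bS$, I compute
\[
\bS-{\bf \Psi}\bT^{-1}{\bf \Psi}^t=\bS-\bS\bA\bT^{-1}\bA^t\bS=\bS(\id-\bA\bT^{-1}\bA^t\bS)=\bS(\bS^{-1}-\bA\bT^{-1}\bA^t)\bS,
\]
and conjugating by $\bS^{-1}$ shows this is in $\mathbb{S}^n_+$ iff $\bS^{-1}-\bA\bT^{-1}\bA^t\in\mathbb{S}^n_+$. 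Analogously, with $\bA={\bf \Psi}\bT^{-1}$ so that ${\bf \Psi}=\bA\bT$ and ${\bf \Psi}^t=\bT\bA^t$, I compute $\bT-{\bf \Psi}^t\bS^{-1}{\bf \Psi}=\bT(\bT^{-1}-\bA^t\bS^{-1}\bA)\bT$, which lies in $\mathbb{S}^n_+$ iff $\bT^{-1}-\bA^t\bS^{-1}\bA\in\mathbb{S}^n_+$. This establishes both equivalences.

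For the last sentence, let $\mu\in\cP_{\bS}$ and let $\bB:=\bA^t\in{\rm GL}_n(\R)$; by Equation \ref{fop-transform} the frame operator of $\bB_{\#}\mu$ is $\bB\bS_\mu\bB^t=\bA^t\bS\bA$, so setting $\bT:=\bA^t\bS\bA$ we get $\bB_{\#}\mu\in\cP_{\bT}$, and $(\id\times\bB)_{\#}(D_{\#}\mu)$ — with $D$ the diagonal map — is a coupling of $\mu$ and $\bB_{\#}\mu$, hence a coupling with one marginal in $\cP_{\bS}$ and the other in $\cP_{\bT}$. Finally, since $\bA$ is invertible, $\bT=\bA^t\bS\bA$ is equivalent to $\bA\bT^{-1}\bA^t=\bA(\bA^{-1}\bS^{-1}(\bA^t)^{-1})\bA^t=\bS^{-1}$, proving the stated equivalence.

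I do not expect a serious obstacle here: the whole statement is a bookkeeping repackaging of the congruence identities already displayed, plus one invocation of the transformation law for frame operators. The only point requiring a little care is making sure the congruence $\bX\mapsto\bS^{-1}\bX\bS^{-1}$ is genuinely by a symmetric matrix (so that it preserves $\mathbb{S}^n_+$ and does not merely preserve the weaker property of having nonnegative spectrum), which holds because $\bS\in\mathbb{S}^n_{++}$; and keeping the two roles of $\bA$ — namely $\bA=\bS^{-1}{\bf \Psi}$ versus $\bA={\bf \Psi}\bT^{-1}$ — clearly separated between the two halves of the statement.
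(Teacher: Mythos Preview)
Your proof is correct and follows essentially the same approach as the paper. The paper's own proof is terse: it declares that the first statement is already covered by the preceding congruence discussion and only verifies the push-forward claim and the equivalence $\bT=\bA^t\bS\bA \Leftrightarrow \bS^{-1}=\bA\bT^{-1}\bA^t$, whereas you additionally spell out the congruence step $\bS-{\bf \Psi}\bT^{-1}{\bf \Psi}^t=\bS(\bS^{-1}-\bA\bT^{-1}\bA^t)\bS$ that the paper leaves implicit.
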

\begin{proof}
Only the second and third statement need to be verified. That $(\bA^t)_{\#}\mu \in \cP_{\bT}$ with $\bT= \bA^t \bS \bA$ was shown earlier. 
By elementary algebra, this identity is equivalent to $\bS^{-1}= \bA \bT^{-1} \bA^t$. The same goes for the alternative identity.  
\end{proof}
{\bf Remark.} The identity $\bT= \bA^t \bS \bA$ implies that the frame operator of the coupling associated with push-forward by the linear map $\bA$ is not positive definite. Hence, such a coupling is never a probabilistic frame. This on the other hand is obvious since the graph of a linear map 
mapping $\R^n$ into $\R^n$ is a proper linear subspace. \\

Now we show that the optimal linear map in Gelbrich's estimate is the unique distance minimizing map between frames with prescribed frame operators.  
\begin{proposition}\label{prop_universal_lin_push}
Given ${\bf S} , {\bf T}$ in $\mathbb{S}^n_{++}$, then for every $\mu \in  \mathcal{P}_{\bf S}$ the push-forward $({\bf A}({\bf S}, {\bf T}))_{\#}\mu$ is the unique probabilistic frame in $\cP_{\bf T}$, so that
$W_2(\mu,  ({\bf A}({\bf S}, {\bf T}))_{\#}\mu )=W_2(\mu, \mathcal{P}_{\bf T})$.  
\end{proposition}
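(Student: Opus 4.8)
The plan is to separate the statement into \emph{attainment} and \emph{uniqueness}. Attainment is read off from Corollary \ref{prop_lower_est}: since $\mu\in\cP_{\bS}$ and any $\nu\in\cP_{\bT}$ have frame operators $\bS_\mu=\bS$, $\bS_\nu=\bT$, Gelbrich's bound reads
\[
W_2^2(\mu,\nu)\ \ge\ \tr\big(\bS+\bT-2(\bS^{1/2}\bT\bS^{1/2})^{1/2}\big),
\]
a quantity not depending on the particular $\nu\in\cP_{\bT}$. By the transformation law \eqref{fop-transform} together with Proposition \ref{prop_A_is_inverse}, the measure $\big(\bA(\bS,\bT)\big)_{\#}\mu$ lies in $\cP_{\bT}$, and the equality clause of Corollary \ref{prop_lower_est} says it realizes this bound. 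Hence $W_2\big(\mu,(\bA(\bS,\bT))_{\#}\mu\big)=W_2(\mu,\cP_{\bT})$, and what remains is to show no other element of $\cP_{\bT}$ attains the infimum.

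For uniqueness, let $\nu\in\cP_{\bT}$ attain the infimum, put $\bA:=\bA(\bS,\bT)\in{\mathbb S}^n_{++}$, fix an orthonormal eigenbasis $\{{\bf e}_i\}$ of $\bA$ with (strictly positive) eigenvalues $\lambda_i$, and let $\gamma\in\Gamma(\mu,\nu)$ be an optimal coupling (it exists by the usual compactness argument for measures with finite second moments). Writing $x_i=\langle{\bf x},{\bf e}_i\rangle$, $y_i=\langle{\bf y},{\bf e}_i\rangle$, $\gamma_i:=(\pi_{{\bf e}_i}\times\pi_{{\bf e}_i})_{\#}\gamma$, orthonormality of $\{{\bf e}_i\}$ and the reverse triangle inequality in $L^2(\gamma_i)$ used in \eqref{was_ltwo_est} give
\[
W_2^2(\mu,\nu)=\sum_{i=1}^n\int_{\R\times\R}|x-y|^2\,d\gamma_i\ \ge\ \sum_{i=1}^n\Big(W_2(\mu,(\pi_{{\bf e}_i^{\perp}})_{\#}\mu)-W_2(\nu,(\pi_{{\bf e}_i^{\perp}})_{\#}\nu)\Big)^2,
\]
and, by the trace computation in the proof of Corollary \ref{prop_lower_est} (performed in exactly this eigenbasis of $\bA(\bS,\bT)$), the right-hand side equals the Gelbrich value, which equals $W_2^2(\mu,\nu)$. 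Since each summand on the right bounds the corresponding term on the left, every gap vanishes, so \eqref{was_ltwo_est} is an equality for each $\gamma_i$.

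Equality in \eqref{was_ltwo_est} for $\gamma_i$ is the equality case of Cauchy--Schwarz with respect to $\gamma_i$, hence $y=c_i x$ for $\gamma_i$-a.e.\ $(x,y)$ with $c_i\ge 0$; matching second moments, $c_i^2\langle{\bf e}_i,\bS{\bf e}_i\rangle=\langle{\bf e}_i,\bT{\bf e}_i\rangle$, and since $\langle{\bf e}_i,\bS{\bf e}_i\rangle>0$ (as $\bS$ is positive definite) this pins $c_i$ to the $i$-th eigenvalue $\lambda_i$ of $\bA$, exactly the constant appearing in the proof of Corollary \ref{prop_lower_est}. Reading this back on $\gamma$: for $\gamma$-a.e.\ $({\bf x},{\bf y})$ we have $\langle{\bf y},{\bf e}_i\rangle=\lambda_i\langle{\bf x},{\bf e}_i\rangle$ for every $i=1,\dots,n$; intersecting these finitely many full-measure conditions and expanding in $\{{\bf e}_i\}$ forces ${\bf y}=\sum_i\lambda_i\langle{\bf x},{\bf e}_i\rangle{\bf e}_i=\bA{\bf x}$. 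Thus $\gamma$ is concentrated on the graph of $\bA$, so $\gamma=(\id\times\bA)_{\#}\mu$ and $\nu=(\pi_2)_{\#}\gamma=\bA_{\#}\mu=\big(\bA(\bS,\bT)\big)_{\#}\mu$, as claimed.

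The step I expect to demand the most care is the bookkeeping: verifying that equality in the composite estimate really forces equality in every coordinate summand (so the Cauchy--Schwarz equality case applies to each $\gamma_i$), and confirming that the proportionality constant produced by that equality case coincides with the eigenvalue $\lambda_i$ of $\bA(\bS,\bT)$ used in Gelbrich's bound. The other ingredients — existence of an optimal coupling, the transformation law \eqref{fop-transform}, and $\big(\bA(\bS,\bT)\big)_{\#}\mu\in\cP_{\bT}$ — are already available in the excerpt.
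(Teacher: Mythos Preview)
Your proof is correct. Attainment comes from Corollary~\ref{prop_lower_est}, and for uniqueness you trace the equality conditions through the chain \eqref{was_proj_est}--\eqref{was_ltwo_est}: since any minimizer $\nu\in\cP_{\bT}$ saturates Gelbrich's bound, each coordinate reverse-triangle (equivalently Cauchy--Schwarz) inequality is tight, forcing the optimal coupling to be concentrated on the graph of $\bA(\bS,\bT)$ and hence $\nu=\bA(\bS,\bT)_{\#}\mu$.

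The paper takes a different, more matrix-theoretic route. Given a minimizer $\nu$ with optimal coupling $\gamma\in\Gamma(\nu,\mu)$, it records (via the Olkin--Pukelsheim discussion preceding the proposition) that the off-diagonal block of $\bS_{\gamma}$ is $\bT\cdot\bA(\bT,\bS)$, and then pushes $\gamma$ forward by $\id\times\bA(\bS,\bT)$. A block-matrix computation shows that the resulting coupling of $\nu$ with $\bA(\bS,\bT)_{\#}\mu$ has frame operator with all four $n\times n$ blocks equal to $\bT$, so \eqref{Wasserstein_trace_estimate} gives $W_2^2\big(\nu,\bA(\bS,\bT)_{\#}\mu\big)\le\tr(\bT+\bT-2\bT)=0$, forcing $\nu=\bA(\bS,\bT)_{\#}\mu$. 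Your argument is more self-contained---it does not rely on identifying the off-diagonal of $\bS_{\gamma}$---and exposes the structure of the optimal coupling directly; the paper's argument is slicker once the matrix machinery is available and sidesteps the coordinatewise equality analysis entirely.
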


\begin{proof}
We extend an argument that was used in the special case of $\bT=\id$ in \cite{loukili2020minimization}. 
Consider the push forward $(\bA(\bS, \bT))_{\#}\mu$. Assume $\nu$ has frame operator $\bT$ and minimizes the 2-Wasserstein distance to $\mu$, so that $W_2(\mu, \nu)=W_2(\mu, \cP_{\bT})$. 
Let $\gamma$ be an optimal coupling between $\nu$ and $\mu$. Then its 
push forward by $\id \times \bA(\bS, \bT)$ is a coupling between $\nu$ and $(\bA(\bS, \bT))_{\#}\mu$ with frame operator 
\[ 
\begin{split}
{\bf S}_{({\bf \id} \times \bA(\bS, \bT))_{\#}\gamma}=&\begin{bmatrix}
{\bf \id} & 0 \\
0 & \bA(\bS, \bT)
\end{bmatrix}
\begin{bmatrix}
\bT & \bT \cdot \bA(\bT, \bS) \\
(\bT \cdot \bA(\bT, \bS))^t & \bS
\end{bmatrix}
\begin{bmatrix}
{\bf \id} & 0 \\
0 & \bA(\bS, \bT)
\end{bmatrix}
\\
=&
\begin{bmatrix}
\bT & \bT \cdot \bA(\bT, \bS) \cdot \bA(\bS, \bT) \\
(\bT\cdot \bA(\bT, \bS) \cdot \bA(\bS, \bT))^t & \bA(\bS, \bT) \cdot \bS \cdot \bA(\bS, \bT) 
\end{bmatrix}
=
\begin{bmatrix}
\bT & \bT  \\
\bT & \bT 
\end{bmatrix}
\end{split}
\]
so that 
\[ W^2_2( (\bA(\bS, \bT))_{\#}\mu, \nu) \leq \tr (\bT + \bT - 2 \bT)=0. 
\]
Hence $ (\bA(\bS, \bT))_{\#}\mu=\nu$.

\end{proof}


\subsection{Proofs of statements from the introduction}
\begin{proof}[Proof of Theorem \ref{thm_main_1}] 
Push-forward with a continuous map is continuous and in particular, if the push-forward is by a linear $\bA \in \mathbb{S}^n_{++}$ then push-forward with $\bA^{-1} \in \mathbb{S}^n_{++}$ provides a continuous inverse. 

The equation for the Wasserstein distance follows from Proposition \ref{prop_lower_est}. 
The particular shape of that formula for push-forwards with general positive definite matrices $\bA \in \mathbb{S}^n_{++}$ 
follows from the first identity in Lemma \ref{Lemma:A_properties}. Finally, the fact that push-forward with
$\bA \in \mathbb{S}^n_{++}$ is the only minimizer of the Wasserstein distance is shown in (the previous) Proposition \ref{prop_universal_lin_push}, again
using the first statement from Lemma \ref{Lemma:A_properties} to adapt to the situation stated in the theorem. 
\end{proof}
We are now in a position to show the following: 
\begin{proof}[Proof of Proposition \ref{prop_metric_equivalence}]
Recall that the ${\bf \Psi}$ so that $\tr\ {\bf \Psi}$ is maximal under the condition 
 \[ \label{proof_matrix} \begin{bmatrix}
{\bf S} & {\bf \Psi} \\
{\bf \Psi}^t & {\bf T}
\end{bmatrix} \in {\mathbb S}^{2n}_{+}\]
is given by ${\bf \Psi}=\bS^{1/2}(\bS^{1/2}\bT\bS^{1/2})^{1/2} \bS^{-1/2}$ 
so that $\tr (\Psi)=\tr (\bS^{1/2}\bT\bS^{1/2})^{1/2}$ is maximal, see \cite{Gelbrich90}, or alternatively \cite{olkin1982distance}. 
The matrix ${\bf \Psi}=\bS^{1/2}\bT^{1/2}$ obeys the identity ${\bf \Psi}\bT^{-1}{\bf \Psi}^t=\bS$, so that in particular $\bS-{\bf \Psi} \bT^{-1}{\bf \Psi}^t \geq 0$. By Corollary \ref{olkin_pukelsheim_semd_condition}, or Lemma 1 in \cite{olkin1982distance} this implies semi-definiteness, hence:   
\[ \begin{bmatrix}
{\bf S} & \bS^{1/2} \bT^{1/2}\\
(\bS^{1/2}\bT^{1/2})^t & {\bf T}
\end{bmatrix} \in {\mathbb S}^{2n}_{+}.\]
By maximality of $\tr (\bS^{1/2}\bT\bS^{1/2})^{1/2}$, see \cite{olkin1982distance}, we    
have $\tr\ \bS^{1/2}\bT^{1/2} \leq \tr (\bS^{1/2}\bT\bS^{1/2})^{1/2}$ 
and hence by Gelbrich's formula  
\[  
\begin{split} W^2_2(\mathcal{P}_{\bf S}, \mathcal{P}_{\bf T})& = \tr ({\bf S} + {\bf T} - 2 (\bS^{1/2}\bT\bS^{1/2})^{1/2} )\\ 
&\leq \tr ({\bf S} + {\bf T} - 2(\bS^{1/2}\bT^{1/2})) =\tr(\bS^{1/2}-\bT^{1/2})^2=\| \bS^{1/2}-\bT^{1/2} \|^2_F.  
\end{split}
\]
We add the arguments showing $d_W$ is a metric. Clearly $W_2(\cP_{\bT},\cP_{\bS})\geq 0$ and equality happens if and only if $\bT=\bS$. 
The symmetry is also clear, since $W_2$ is a metric. For the triangle inequality let $\bP \in {\mathbb S}^n_{++}$ and consider $\mu \in \cP_{\bP}$, 
then 
\[ 
\begin{split}
     W_2(\cP_{\bT},\cP_{\bS}) \leq  W_2(\bA(\bP&,\bT)_{\#} \mu , \bA(\bP,\bS)_{\#}\mu) \\ 
     & \leq  W_2(\bA(\bP,\bT)_{\#}\mu, \mu)+W_2(\mu, \bA(\bP,\bS)_{\#}\mu)\\ &  
\hspace*{3.3cm} = W_2(\cP_{\bT},\cP_{\bP})+W_2(\cP_{\bP},\cP_{\bS}). 
\end{split}
\]
Note that all norms on a finite-dimensional vector space are equivalent. The metrics $d_{op}(\bS, \bT):=\| \bS^{1/2}-\bT^{1/2} \|_{op}$, and $d_{F}(\bS, \bT):=\| \bS^{1/2}-\bT^{1/2} \|_{F}$ together with the lower estimate from Corollary \ref{cor_continuity_fr_map} complete the estimate.\\

For a symmetric representation of the metric, note that an optimal coupling between measures with frame operator $\bT$ and frame operator $\bS$ has frame operator with off-diagonal matrix $\Psi=\bT^{1/2}(\bT^{1/2}\bS\bT^{1/2})^{1/2} \bT^{-1/2}$.  
Clearly, from symmetry of $ W^2_2(\mathcal{P}_{\bf S}, \mathcal{P}_{\bf T})$ and Gelbrich's representation  
\[ \tr ({\bf T} + {\bf S} - 2 (\bT^{1/2}\bS \bT^{1/2})^{1/2} ) =  W^2_2(\mathcal{P}_{\bf S}, \mathcal{P}_{\bf T}) = \tr ({\bf S} + {\bf T} - 2 (\bS^{1/2}\bT\bS^{1/2})^{1/2} ).
\]
It follows $\tr\ (\bT^{1/2}\bS \bT^{1/2})^{1/2}  = \tr \ (\bS^{1/2}\bT\bS^{1/2})^{1/2}$, so that 
\[d_W(\bS,\bT)=\tr (\bS +\bT - (\bS^{1/2}\bT\bS^{1/2})^{1/2} - (\bT^{1/2}\bS \bT^{1/2})^{1/2} ). \]
Using $\tr\ (\bT^{1/2}\bS \bT^{1/2})^{1/2} = \tr\  (\bT \bA(\bT, \bS))$ and $\tr\ (\bS^{1/2}\bT\bS^{1/2})^{1/2}=
\tr\  (\bS \bA(\bS, \bT)) $ we may rewrite this as 
\[d_W(\bS,\bT)=\tr \ (\bS(\id - \bA(\bS, \bT)) + \bT(\id - \bA(\bT, \bS)) ). 
\]
The distance $d_W$ extends from $\mathbb{S}^n_{++}$ to $\mathbb{S}^n_{+}$ for continuity reasons. 
Indeed since the function 
$$(\bS,\bT) \mapsto \tr ({\bf S} + {\bf T} - 2 (\bS^{1/2}\bT\bS^{1/2})^{1/2} )$$
is well-defined and continuous on $\mathbb{S}^n_{+} \times \mathbb{S}^n_{+}$ and $\mathbb{S}^n_{+}$ is the closure of $\mathbb{S}^n_{++}$ 
the metric properties continue to hold on $\mathbb{S}^n_{+}$. 
\end{proof}

\section{Transport duals and generalizations.}
\subsection{Wasserstein distances, special couplings and transport duals} 
Let $\mu \in \cP_{2}(\R^n)$ be a probabilistic frame, following \cite{wickman2017duality} we define the set of {\em transport duals} for $\mu$ to be  
\begin{equation}
\label{cond:transport_dual}  D_{\mu}:=\left\{\nu \in \cP_2(\R^n):\ \text{there is } \gamma \in \Gamma(\mu,\nu) \text{ with } \int_{ \R^{2n} } {\bf x}{\bf y}^t\ d\gamma({\bf x},{\bf y}) = \id \right\}. 
\end{equation} 
An equivalent description of $D_{\mu}$ is:  
\begin{equation}
\label{cond:transport_dual_frame_op}  D_{\mu}=\left\{\nu \in \cP_2(\R^n):\ \text{there is } \gamma \in \Gamma(\mu,\nu) \text{ with } 
{\bf S}_{\gamma}=\begin{bmatrix}
\bS_{\mu} & {\bf \id} \\
{\bf \id} & \bS_{\nu}
\end{bmatrix}
\right \}.
\end{equation} 
The off-diagonal $n \times n$-matrices, here $\id$, of $\gamma$'s frame matrix are determined by the integral in Equation  \ref{cond:transport_dual}.  
This motivates the following generalization. 
\begin{definition}
Given $\bM \in GL_n(\R)$ and probabilities $\mu, \nu \in \cP_2(\R)$. We call $(\mu, \nu)$ a $\bM$-dual pair, 
if there is $\gamma \in \Gamma(\mu,\nu)$ with frame operator 
\begin{equation}\label{M_frame_matrix}
{\bf S}_{\gamma}=\begin{bmatrix}
\bS_{\mu} & \bM \\
\bM^t & \bS_{\nu}
\end{bmatrix}.\end{equation}
The set of $\bM$-dual pairs is denoted by $D(\bM)$. Further let $D_{\mu}(\bM) \subset D(\bM)$ be the set of $\bM$ dual couplings with fixed 
first marginal $\mu$. 
\end{definition}
\begin{theorem}
Suppose $\bM \in M_n(\R)$ is definite. 
Then for any $(\mu,\nu) \in D(\bM)$  we have for all ${\bf z} \in S^{n-1}$
\[W_2(\mu, (\pi_{{\bf z}^{\perp}})_{\#}\mu)\cdot W_2(\nu, (\pi_{{\bf z}^{\perp}})_{\#}\nu) >0.  \]
In particular, both $\mu$ and $\nu$ are frames. Moreover, the set of  $\bM$-duals  $D(\bM)$ is in bijection to the set of transport duals $D(\id)$, 
hence the set of $\bM$-duals is not empty. In fact 
push forward with $\bM^t$ as given by $(\mu, \nu)  \mapsto (\mu, (\bM^t)_{\#}\nu) \in D(\bM)$ defines a bijective map $D(\id) \rightarrow D(\bM)$.  
\end{theorem}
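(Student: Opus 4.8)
The plan is to prove the three assertions in sequence, with the bijection statement doing most of the work and the non-degeneracy inequality following from it together with the block-matrix positivity criterion from Corollary \ref{olkin_pukelsheim_semd_condition}. First I would establish the bijection. Suppose $(\mu,\nu)\in D(\id)$, witnessed by a coupling $\gamma\in\Gamma(\mu,\nu)$ with $\int {\bf x}{\bf y}^t\,d\gamma=\id$. Consider $\gamma':=(\id\times\bM^t)_{\#}\gamma$, which is a coupling of $\mu$ and $(\bM^t)_{\#}\nu$. Using the transformation rule \ref{fop-transform} for frame operators under linear push-forward (applied to the block-diagonal map $\id\oplus\bM^t$ on $\R^{2n}$), the frame operator of $\gamma'$ is
\[
{\bf S}_{\gamma'}=\begin{bmatrix}\id & 0\\ 0 & \bM^t\end{bmatrix}\begin{bmatrix}\bS_\mu & \id\\ \id & \bS_\nu\end{bmatrix}\begin{bmatrix}\id & 0\\ 0 & \bM\end{bmatrix}=\begin{bmatrix}\bS_\mu & \bM\\ \bM^t & \bM^t\bS_\nu\bM\end{bmatrix}.
\]
Since $\bM\in{\rm GL}_n(\R)$, the push-forward $(\bM^t)_{\#}$ is invertible on $\cP_2(\R^n)$, so $\bS_{(\bM^t)_{\#}\nu}=\bM^t\bS_\nu\bM$ and the off-diagonal block is exactly $\bM$; hence $(\mu,(\bM^t)_{\#}\nu)\in D(\bM)$. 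Conversely, given $(\mu,\nu')\in D(\bM)$ with witnessing coupling $\gamma''$, push forward by $\id\times(\bM^t)^{-1}=\id\times(\bM^{-1})^t$ to land back in $D(\id)$; the same computation shows the off-diagonal block becomes $\bM(\bM^{-1})^t\cdot$... more carefully, $(\id\times(\bM^{-1})^t)_{\#}$ conjugates ${\bf S}_{\gamma''}$ by $\id\oplus(\bM^{-1})^t$, turning the off-diagonal $\bM$ into $\bM\cdot\bM^{-1}=\id$ (the relevant block is the $(1,2)$-entry $\bM$ times the right factor $(\bM^{-1})$, i.e. $\bM\bM^{-1}=\id$), and the $(2,2)$ block into $(\bM^{-1})^t\bS_{\nu'}\bM^{-1}=\bS_{(\bM^{-1})^t_{\#}\nu'}$. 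These two maps are mutually inverse since $(\bM^t)_{\#}\circ((\bM^{-1})^t)_{\#}=((\bM\bM^{-1})^t)_{\#}=\id_{\#}=\mathrm{id}$, giving the claimed bijection $D(\id)\to D(\bM)$, $(\mu,\nu)\mapsto(\mu,(\bM^t)_{\#}\nu)$. Non-emptiness of $D(\bM)$ then follows because $D(\id)\neq\emptyset$: every probabilistic frame $\mu$ has at least one transport dual (e.g. its canonical dual $(\bS_\mu^{-1})_{\#}\mu$, realized by the coupling $(\id\times\bS_\mu^{-1})_{\#}\mu\cdot$-type diagonal coupling).

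Next I would prove the non-degeneracy inequality. Let $(\mu,\nu)\in D(\bM)$ with $\bM$ definite, witnessed by $\gamma$ with frame operator as in \ref{M_frame_matrix}. Since ${\bf S}_\gamma\in\mathbb{S}^{2n}_+$, the block-matrix criterion (the computation preceding Corollary \ref{olkin_pukelsheim_semd_condition}, valid here because we may first check $\bS_\mu,\bS_\nu$ are at least positive semidefinite and handle the general congruence) gives $\bS_\mu-\bM\bS_\nu^{-1}\bM^t\in\mathbb{S}^n_+$ whenever $\bS_\nu\in\mathbb{S}^n_{++}$, and symmetrically $\bS_\nu-\bM^t\bS_\mu^{-1}\bM\in\mathbb{S}^n_+$ whenever $\bS_\mu\in\mathbb{S}^n_{++}$; but to get started one needs positivity of at least one diagonal block. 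The cleanest route: for any unit ${\bf z}$, the vector ${\bf w}:=({\bf z},{\bf 0})\in\R^{2n}$ gives ${\bf w}^t{\bf S}_\gamma{\bf w}={\bf z}^t\bS_\mu{\bf z}=W_2^2(\mu,(\pi_{{\bf z}^\perp})_\#\mu)\ge 0$, and similarly with $({\bf 0},{\bf z})$ for $\nu$. Suppose for contradiction that ${\bf z}^t\bS_\mu{\bf z}=0$ for some unit ${\bf z}$. Then testing ${\bf S}_\gamma\succeq 0$ against $({\bf z},t{\bf u})$ for arbitrary ${\bf u}$ and $t\in\R$ forces the cross term $2t\,{\bf z}^t\bM{\bf u}$ to vanish for all $t$, hence ${\bf z}^t\bM{\bf u}=0$ for all ${\bf u}$, i.e. ${\bf z}^t\bM={\bf 0}$ — contradicting invertibility (indeed definiteness) of $\bM$. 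The same argument with the roles of $\mu,\nu$ and $\bM,\bM^t$ swapped rules out ${\bf z}^t\bS_\nu{\bf z}=0$. Therefore ${\bf z}^t\bS_\mu{\bf z}>0$ and ${\bf z}^t\bS_\nu{\bf z}>0$ for every unit ${\bf z}$, so by \ref{int:Wasserstein_Frame} the product $W_2(\mu,(\pi_{{\bf z}^\perp})_\#\mu)\cdot W_2(\nu,(\pi_{{\bf z}^\perp})_\#\nu)>0$, and by Proposition \ref{Was_frame_formulation} both $\mu$ and $\nu$ are frames.

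The main obstacle I anticipate is purely bookkeeping: getting the conjugation of the $2n\times 2n$ block frame operator by $\id\oplus\bM^t$ (and its inverse) exactly right, in particular tracking which side each factor of $\bM$ lands on in the off-diagonal block, and confirming that the two push-forward maps are genuinely inverse to each other as maps of pairs (this hinges only on $(\bM^t)^{-1}=(\bM^{-1})^t$ and functoriality of push-forward). There is a small subtlety worth a sentence in the write-up: a priori $(\mu,\nu)\in D(\bM)$ only guarantees ${\bf S}_\gamma\succeq 0$, not that $\bS_\mu$ or $\bS_\nu$ is definite — so one should present the non-degeneracy argument first (via the quadratic-form test against $({\bf z},t{\bf u})$, which needs only ${\bf S}_\gamma\succeq 0$ and invertibility of $\bM$) and only afterward conclude that $\mu,\nu$ are frames; invoking Corollary \ref{olkin_pukelsheim_semd_condition} directly would be circular. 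Everything else — non-emptiness, the explicit bijection formula — then drops out with no further work.
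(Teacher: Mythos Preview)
Your proof is correct. The bijection argument is the same as the paper's: push forward the witnessing coupling by $\id\times\bM^t$ and compute the block frame operator via conjugation by $\id\oplus\bM^t$.

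For the non-degeneracy inequality, however, you take a genuinely different route. The paper argues directly via Cauchy--Schwarz in $L^2(\gamma)$: since $\bM=\int {\bf x}{\bf y}^t\,d\gamma$, one has
\[
0<|\langle {\bf z},\bM{\bf z}\rangle|=\Big|\int \langle {\bf z},{\bf x}\rangle\langle {\bf z},{\bf y}\rangle\,d\gamma\Big|\le\Big(\int\langle {\bf z},{\bf x}\rangle^2\,d\mu\Big)^{1/2}\Big(\int\langle {\bf z},{\bf y}\rangle^2\,d\nu\Big)^{1/2},
\]
the strict inequality on the left being exactly where definiteness of $\bM$ enters. This yields the \emph{quantitative} lower bound $W_2(\mu,(\pi_{{\bf z}^\perp})_\#\mu)\cdot W_2(\nu,(\pi_{{\bf z}^\perp})_\#\nu)\ge|\langle {\bf z},\bM{\bf z}\rangle|$, which for $\bM=\id$ specialises to the bound $\ge 1$ of Corollary~\ref{frame_off_identity}. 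Your argument---testing ${\bf S}_\gamma\succeq 0$ against $({\bf z},t{\bf u})$ to force $\bM^t{\bf z}={\bf 0}$---is more general in one respect: it requires only invertibility of $\bM$, not definiteness. On the other hand it gives only strict positivity of the product, not the explicit lower bound the paper uses immediately afterwards. Both arguments are valid; the paper's is slightly shorter and feeds directly into the subsequent corollary, while yours would extend the ``both marginals are frames'' conclusion to all of $D(\bM)$ with $\bM\in{\rm GL}_n(\R)$.
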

\begin{proof}
For any ${\bf z} \in S^{n-1}$ by definiteness of $\bM$    
\begin{equation}
\begin{split} 
0 < |\langle {\bf z}, \bM {\bf z}\rangle| =|\int_{\R^{2n}} \langle {\bf z},{\bf x} \rangle \langle {\bf y}, {\bf z} \rangle \ d\gamma(\mu, \nu)| \leq \int_{\R^{2n}} |\langle {\bf z},{\bf x} \rangle| \ |\langle {\bf z}, {\bf y} \rangle| \ d\gamma(\mu, \nu)\\
\leq (\int_{\R^{n}} |\langle {\bf z},{\bf x} \rangle|^2 \ d\mu)^{\frac{1}{2}}(\int_{\R^{n}} |\langle {\bf z}, {\bf y} \rangle|^2 \ d\nu)^{\frac{1}{2}}=W_2(\mu, (\pi_{{\bf z}^{\perp}})_{\#}\mu)\cdot W_2(\nu, (\pi_{{\bf z}^{\perp}})_{\#}\nu), 
\end{split}
\end{equation}
Would $\mu$ and $\nu$ be not both probabilistic frames, the right-hand side of this inequality would be zero for some ${\bf z} \in S^{n-1}$. 
Now let $(\mu, \nu)$ be a dual pair and $\gamma \in \Gamma(\mu, \nu)$ a coupling with the respective frame operator. Then the frame operator 
of the push forward of $\gamma$ with ${\bf \id} \times \bM^t$ is 
\[ 
{\bf S}_{({\bf \id} \times \bM^t)_{\#}\gamma}=\begin{bmatrix}
\id & 0 \\
0 & \bM^t
\end{bmatrix}
\begin{bmatrix}
\bS_{\mu} & \id \\
\id & \bS_{\nu}
\end{bmatrix}
\begin{bmatrix}
\id & 0 \\
0 & \bM
\end{bmatrix}
=
\begin{bmatrix}
\bS_{\mu} & \bM \\
\bM^t & \bM^t \cdot \bS_{\nu} \cdot \bM 
\end{bmatrix}
\]
hence 
\[ {\bf S}_{({\bf \id} \times \bM^t)_{\#}\gamma} = 
\begin{bmatrix}
\bS_{\mu} & \bM \\
\bM^t  & \bS_{(\bM^t)_{\#}\nu}  
\end{bmatrix}, 
\]
so that $(\mu, (\bM^t)_{\#}\nu) \in D(\bM)$.  
\end{proof}
An analogous calculation gives for any $n\times n$ matrix $\bA$
\[ 
{\bf S}_{(\bA \times \bA)_{\#}\gamma}
=
\begin{bmatrix}
\bA & 0 \\
0 & \bA
\end{bmatrix}
\begin{bmatrix}
\bS_{\mu} & \id \\
\id & \bS_{\nu}
\end{bmatrix}
\begin{bmatrix}
\bA^t & 0 \\
0 & \bA^t
\end{bmatrix}
=
\begin{bmatrix}
\bS_{\bA_{\#}\mu} & \bM \\
\bM  & \bS_{\bA_{\#}\nu}  
\end{bmatrix}
\]
with $\bM :=\bA \bA^t$ symmetric. In particular if $\bA \in {\rm O}(n)$, the set of orthogonal $n\times n$ matrices, then a pair of duals
is mapped to a pair of duals under this push-forward. If  
\[ 
{\bf S}_{(\bA^t \times \bA^{-1})_{\#}\gamma}
=
\begin{bmatrix}
\bA^t \cdot \bS_{\mu}\cdot \bA & \id \\
\id  & \bA^{-1} \cdot \bS_{\nu} \cdot (\bA^{-1})^t 
\end{bmatrix}=
\begin{bmatrix}
\bS_{(\bA^t)_{\#}\mu} & \id \\
\id  & \bS_{(\bA^{-1})_{\#}\nu} 
\end{bmatrix}
\]
with $\bA:=\bS^{-1/2}_{\mu}{\bf O} \bS^{1/2}_{\mu}$ 
and $\bO \in {\rm O}(n)$ the frame operator of the first marginal $\mu$ is stabilized. Applying the previous Theorem to transport duals gives:      
\begin{corollary}\label{frame_off_identity}
If  $(\mu,\nu) \in D(\id)$ then $\mu$ and $\nu$ are probabilistic frames and we have for all ${\bf z} \in S^{n-1}$
\[W_2(\mu, (\pi_{{\bf z}^{\perp}})_{\#}\mu)\cdot W_2(\nu, (\pi_{{\bf z}^{\perp}})_{\#}\nu) \geq 1.  \]
\end{corollary}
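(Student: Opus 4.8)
The plan is to specialize the previous theorem to $\bM = \id$. The key observation is that $\id$ is positive definite, so the theorem applies directly: for any $(\mu,\nu) \in D(\id)$ and any ${\bf z} \in S^{n-1}$, we immediately get $W_2(\mu, (\pi_{{\bf z}^{\perp}})_{\#}\mu)\cdot W_2(\nu, (\pi_{{\bf z}^{\perp}})_{\#}\nu) > 0$, which in turn forces both $\mu$ and $\nu$ to be probabilistic frames. So the only thing left to prove is the sharper quantitative bound, namely that this product is not merely positive but at least $1$.

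For the quantitative bound, I would revisit the chain of inequalities in the proof of the theorem with $\bM = \id$. There the starting point was $0 < |\langle {\bf z}, \bM {\bf z}\rangle|$; with $\bM = \id$ this becomes $\langle {\bf z}, {\bf z}\rangle = \|{\bf z}\|^2 = 1$ since ${\bf z} \in S^{n-1}$. Running through the same Cauchy--Schwarz estimates on the coupling $\gamma \in \Gamma(\mu,\nu)$ with $\int {\bf x}{\bf y}^t\, d\gamma = \id$, we obtain
\[
1 = \langle {\bf z}, {\bf z}\rangle = \left|\int_{\R^{2n}} \langle {\bf z},{\bf x}\rangle \langle {\bf y}, {\bf z}\rangle\, d\gamma\right| \leq \left(\int_{\R^n} \langle {\bf z},{\bf x}\rangle^2\, d\mu\right)^{1/2}\left(\int_{\R^n} \langle {\bf z},{\bf y}\rangle^2\, d\nu\right)^{1/2},
\]
and the right-hand side equals $W_2(\mu, (\pi_{{\bf z}^{\perp}})_{\#}\mu)\cdot W_2(\nu, (\pi_{{\bf z}^{\perp}})_{\#}\nu)$ by Equation~\ref{int:Wasserstein_Frame} (or Proposition~\ref{Was_Hyperplane_Frame} with $p=2$). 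This gives exactly the claimed inequality $\geq 1$.

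I do not anticipate a serious obstacle here: the corollary is essentially a clean instantiation of the theorem's argument with a sharper constant coming from the fact that the off-diagonal block is the identity rather than a general definite matrix. The one point to be careful about is making sure the identification $\left(\int \langle {\bf z},{\bf x}\rangle^2\, d\mu\right)^{1/2} = W_2(\mu, (\pi_{{\bf z}^{\perp}})_{\#}\mu)$ is invoked correctly for both marginals, which is immediate from the results already established. In short, the proof amounts to: invoke the theorem for the positivity and the frame conclusions, then re-run its Cauchy--Schwarz chain with $\langle {\bf z}, \id\, {\bf z}\rangle = 1$ to upgrade positivity to the bound $\geq 1$.
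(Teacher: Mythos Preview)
Your proposal is correct and matches the paper's approach exactly: the corollary is stated right after the theorem with the one-line justification ``Applying the previous Theorem to transport duals gives,'' and your write-up spells out precisely that specialization, observing that $\langle {\bf z}, \id\, {\bf z}\rangle = 1$ on $S^{n-1}$ upgrades the strict positivity in the theorem's Cauchy--Schwarz chain to the bound $\geq 1$.
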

We write $\mu_c:=(\bS^{-1})_{\#}\mu$ for the canonical dual of a probabilistic frame $\mu$. 
\begin{theorem}
Given $\bS \in {\mathbb S}^n_{++}$ and $\mu \in \cP_{\bS}$. Then the canonical dual $\mu_c$ is the only
transport dual of $\mu$ with the frame operator $\bS^{-1}$ and for any non-canonical dual coupling $\gamma$ of $\mu$ and $\nu$ 
we have 
\[ \int_{\R^{2n}}\| {\bf x} -{\bf y} \|^2\ d\gamma({\bf x}, {\bf y} ) >  W^2_2(\mu,\mu_c).   \]
Furthermore for non-canonical dual pairs $W_2(\nu, \mu) >  W_2(\cP_{\bS_{\nu}}, \cP_{\bS})$, while equality holds for $\nu=\mu_c$. 

All eigenvalues of $\bS^{1/2}_{\nu}\bS_{\mu} \bS^{1/2}_{\nu}$, respectively 
$\bS_{\mu} \bS_{\nu}$, need to be at least $1$ for a transport dual between $\mu$ and $\nu$ to exist. If 
$\bS_{\nu} \neq \bS^{-1}_{\mu}$, then some of the eigenvalues of $\bS^{1/2}_{\nu}\bS_{\mu} \bS^{1/2}_{\nu}$, respectively $\bS_{\mu} \bS_{\nu}$, must be stricly greater than $1$. 
\end{theorem}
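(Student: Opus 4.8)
The plan is to establish the four assertions in turn: the first two carry the content, while the eigenvalue conditions at the end drop out of the Olkin--Pukelsheim positive-semidefiniteness criterion (Corollary~\ref{olkin_pukelsheim_semd_condition}). For the uniqueness of the canonical dual, note first that $\bS_{\mu_c}=\bS^{-1}\bS\bS^{-1}=\bS^{-1}$ (the first part of Lemma~\ref{Lemma:A_properties}). Conversely, let $\nu$ be a transport dual of $\mu$ with $\bS_\nu=\bS^{-1}$, realized by $\gamma\in\Gamma(\mu,\nu)$ whose frame operator is $\begin{bmatrix}\bS & \id\\ \id & \bS^{-1}\end{bmatrix}$. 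The congruence computation preceding Corollary~\ref{olkin_pukelsheim_semd_condition} (with off-diagonal $\Psi=\id$) exhibits this block matrix as congruent to the block-diagonal matrix with blocks $\bS-\id\bS\id=0$ and $\bS^{-1}$, so $\bS_\gamma$ has rank $n$ with kernel $\{({\bf v},-\bS{\bf v}):{\bf v}\in\R^n\}$. Since $\bS_\gamma=\int_{\R^{2n}}({\bf x},{\bf y})({\bf x},{\bf y})^t\,d\gamma$, the support of $\gamma$ lies in $(\ker\bS_\gamma)^\perp=\{({\bf x},{\bf y}):{\bf x}=\bS{\bf y}\}$, the graph of $\bS^{-1}$; combined with first marginal $\mu$ this forces $\gamma=(\id\times\bS^{-1})_\#\mu$ and hence $\nu=\mu_c$.

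For the cost inequality: if $\gamma$ is any dual coupling of $\mu$ and $\nu$, then the off-diagonal block of $\bS_\gamma$ is $\id$, so $\int_{\R^{2n}}\|{\bf x}-{\bf y}\|^2\,d\gamma=\tr\bS+\tr\bS_\nu-2n$. Since $\bA(\bS,\bS^{-1})=\bS^{-1/2}(\bS^{1/2}\bS^{-1}\bS^{1/2})^{1/2}\bS^{-1/2}=\bS^{-1}$, we have $\mu_c=(\bA(\bS,\bS^{-1}))_\#\mu$, so Corollary~\ref{prop_lower_est} holds with equality and $W^2_2(\mu,\mu_c)=\tr(\bS+\bS^{-1}-2\id)=\tr\bS+\tr\bS^{-1}-2n$. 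Thus the asserted inequality is equivalent to $\tr\bS_\nu>\tr\bS^{-1}$; and the existence of $\gamma$ forces, by the semidefiniteness criterion above (with off-diagonal $\id$), that $\bS_\nu-\bS^{-1}\in{\mathbb S}^n_+$, so if $\nu\neq\mu_c$ then by the previous paragraph this matrix is nonzero and its trace is strictly positive.

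The same criterion yields the last two sentences at once: a transport dual between $\mu$ and $\nu$ exists only if $\bS_\mu-\bS_\nu^{-1}\in{\mathbb S}^n_+$, equivalently, conjugating by $\bS_\nu^{1/2}$, $\bS_\nu^{1/2}\bS_\mu\bS_\nu^{1/2}\succeq\id$, i.e. all its eigenvalues are $\ge 1$; since $\bS_\mu\bS_\nu=\bS_\nu^{-1/2}(\bS_\nu^{1/2}\bS_\mu\bS_\nu^{1/2})\bS_\nu^{1/2}$ is similar to it, it has the same eigenvalues, and if $\bS_\nu\neq\bS_\mu^{-1}$ then $\bS_\nu^{1/2}\bS_\mu\bS_\nu^{1/2}\neq\id$, so not all eigenvalues equal $1$ and at least one exceeds $1$. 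For the comparison $W_2(\nu,\mu)\ge W_2(\cP_{\bS_\nu},\cP_\bS)$, Gelbrich's estimate (Corollary~\ref{prop_lower_est}) together with the definition of $d_W$ gives this for every dual pair, with equality when $\nu=\mu_c$ because $\mu_c$ is then the distance-minimizing linear push-forward (Proposition~\ref{prop_universal_lin_push}, using $\bA(\bS,\bS^{-1})=\bS^{-1}$); for a non-canonical dual pair one argues that equality would, by Proposition~\ref{prop_universal_lin_push}, force $\nu=(\bA(\bS,\bS_\nu))_\#\mu$ with unique optimal coupling $(\id\times\bA(\bS,\bS_\nu))_\#\mu$, whose off-diagonal block $\bS\,\bA(\bS,\bS_\nu)$ has trace $\tr(\bS^{1/2}\bS_\nu\bS^{1/2})^{1/2}$.

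The main obstacle is exactly this last point: once $\bS_\nu\neq\bS^{-1}$ one has $\bS^{1/2}\bS_\nu\bS^{1/2}\succeq\id$ with an eigenvalue strictly above $1$, so the optimal coupling's off-diagonal trace exceeds $n$, whereas a dual coupling has off-diagonal trace exactly $n$; to conclude one must show that no dual coupling of a non-canonical pair can attain the Gelbrich floor, i.e. reconcile the rigidity of Proposition~\ref{prop_universal_lin_push} (optimal coupling is the linear push-forward with prescribed off-diagonal trace) against the separate constraint imposed by being a transport dual. The other steps are short, relying only on the Schur-complement identity, the positive-semidefiniteness criterion, and Gelbrich's bound already established in the excerpt.
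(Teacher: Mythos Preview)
Your uniqueness argument for the canonical dual differs from the paper's and is cleaner: the paper argues by contradiction, noting that a non-canonical dual $\nu\in\cP_{\bS^{-1}}$ would satisfy both $W_2^2(\mu,\nu)\le\tr(\bS+\bS^{-1}-2\id)$ (via the dual coupling) and $W_2^2(\mu,\nu)>\tr(\bS+\bS^{-1}-2\id)$ (by the uniqueness clause in Proposition~\ref{prop_universal_lin_push}). You instead compute that the coupling's frame operator has rank $n$, forcing the coupling onto the graph of $\bS^{-1}$; this yields uniqueness of the dual \emph{coupling}, not just of its second marginal. Your treatment of the cost inequality and of the eigenvalue statements matches the paper's.

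The gap you flag in the strict inequality $W_2(\nu,\mu)>W_2(\cP_{\bS_\nu},\cP_{\bS})$ is genuine, and the paper's own proof does not close it either: what the paper actually derives is $(\bS_\nu^{1/2}\bS_\mu\bS_\nu^{1/2})^{1/2}\ge\id$ with equality iff $\bS_\nu=\bS_\mu^{-1}$, hence $W_2^2(\cP_{\bS_\mu},\cP_{\bS_\nu})<\tr(\bS_\mu+\bS_\nu-2\id)=\int\|{\bf x}-{\bf y}\|^2\,d\gamma$ for any non-canonical dual coupling $\gamma$. That compares the Gelbrich floor to the cost of the \emph{dual} coupling, not to $W_2(\mu,\nu)$. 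Your suggested route suffers the same conflation: showing that the optimal coupling has off-diagonal trace exceeding $n$ only prevents the optimal coupling from being a dual coupling; it does not prevent $\nu$ from being simultaneously a transport dual (via some other coupling) and equal to $(\bA(\bS,\bS_\nu))_\#\mu$. In fact the strict inequality as stated fails: take $n=1$, $\mu=\tfrac12(\delta_1+\delta_{-1})$ (so $\bS_\mu=1$ and $\mu_c=\mu$) and $\nu=\tfrac12(\delta_2+\delta_{-2})$. Then $\gamma=\tfrac38\delta_{(1,2)}+\tfrac18\delta_{(1,-2)}+\tfrac18\delta_{(-1,2)}+\tfrac38\delta_{(-1,-2)}$ is a dual coupling (one checks $\int xy\,d\gamma=1$), so $(\mu,\nu)$ is a non-canonical dual pair, yet $W_2(\mu,\nu)=1=W_2(\cP_{\bS_\mu},\cP_{\bS_\nu})$.
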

Following \cite{olkin1982distance}, given $\bA, {\bf B} \in \mathbb{S}_+$ we write $\bA \geq {\bf B}$ when $\bA - {\bf B} \in \mathbb{S}_+$ and $\bA > {\bf B}$ when $\bA - {\bf B} \in \mathbb{S}_{++}$ respectively. This is a partial order for positive semi-definite matrices and is known as Loewner order. 
\begin{proof}
Suppose $\nu \in  \cP_{\bS^{-1}}$ is a non-canonical transport dual of $\mu$, then for any dual coupling $\gamma \in \Gamma(\mu,\nu)$ 
its frame operator fulfills the inequality $\tr(\bS_{\mu}+\bS^{-1}_{\mu}-2\cdot \id) \geq W^2_2(\mu, \nu)$. Since $W^2_2(\mu, \nu)> W^2_2(\mu, \cP_{\bS^{-1}_{\mu}}) = W^2_2(\mu, (\bS^{-1}_{\mu})_{\#}\mu)=\tr(\bS_{\mu}+\bS^{-1}_{\mu}-2\cdot \id)$ we have a contradiction.   

By Corollary \ref{olkin_pukelsheim_semd_condition}, see also \cite[Lemma 1]{olkin1982distance}, a pair $(\mu, \nu) \in \cP_2(\mathbb{R}^n) \times \cP_2(\mathbb{R}^n)$ can only be a dual pair if $\bS_{\nu}-\bS^{-1}_{\mu} \in \mathbb{S}_+$. This implies $\tr \ \bS_{\nu}\geq \tr\ \bS^{-1}_{\mu}$ and since the non-negativity condition must hold 
for the frame operator of any dual coupling $\gamma \in \Gamma(\mu,\nu)$ the stated inequality follows from 
\[ \int_{\R^{2n}}\| {\bf x} -{\bf y} \|^2\ d\gamma({\bf x}, {\bf y} ) = \tr (\bS_{\mu}+\bS_{\nu}-2 \id) \geq \tr (\bS_{\mu}+\bS^{-1}_{\mu}-2 \id)= W^2_2(\mu,(\bS^{-1})_{\#}\mu).\]
By elementary algebra $\bS_{\nu}-\bS^{-1}_{\mu} \geq 0$ is equivalent to $\bS^{1/2}_{\nu}\bS_{\mu} \bS^{1/2}_{\nu} \geq \id$, or equivalently $(\bS^{1/2}_{\nu}\bS_{\mu} \bS^{1/2}_{\nu})^{1/2} \geq \id$, with equality if and only if $\bS_{\nu}=\bS^{-1}_{\mu}$. The second estimate now follows 
from $W^2_2(\cP_{\bS_{\mu}}, \cP_{\bS_{\nu}})=\tr(\bS_{\mu}+\bS_{\nu}- 2 (\bS^{1/2}_{\nu}\bS_{\mu} \bS^{1/2}_{\nu})^{1/2} )$. 
The previous definiteness condition implies that the eigenvalues of $\bS^{1/2}_{\nu}\bS_{\mu} \bS^{1/2}_{\nu}$ are at least $1$. 
If all eigenvalues of $\bS^{1/2}_{\nu}\bS_{\mu} \bS^{1/2}_{\nu}$ are one, then $\bS_{\nu} = \bS^{-1}_{\mu}$. Hence, if $\bS_{\nu} \neq \bS^{-1}_{\mu}$, then one of the eigenvalues of $\bS^{1/2}_{\nu}\bS_{\mu} \bS^{1/2}_{\nu}$ must be greater than $1$.  
Regarding the eigenvalues of $\bS_{\mu}\bS_{\nu}$, by multiplicativity of the determinant, $\lambda$ is an eigenvalue of $\bS^{1/2}_{\nu}\bS_{\mu} \bS^{1/2}_{\nu}$ is equivalent to $\det(\bS_{\mu} -\lambda \bS^{-1}_{\nu})=0$ and this is equivalent to $\det(\bS_{\mu}\bS_{\nu} -\lambda \id)=0$. 
\end{proof}

\begin{corollary} 
Assume $\mu \in \cP_{\bS}$. Then $W_2(\nu, (\pi_{{\bf x}^{\perp}})_{\#} \nu) \geq  W_2(\mu_c, (\pi_{{\bf x}^{\perp}})_{\#} \mu_c)$ for any transport dual 
$\nu \in \cP_{2}(\R^n)$ of $\mu$. Furthermore, if $W_2(\mu, (\pi_{{\bf x}^{\perp}})_{\#} \mu)=({\bf x}^t\bS_{\mu}{\bf x})^{1/2} \leq 1$
for all ${\bf x} \in S^{n-1}$, then $ W_2(\mu, \nu) \geq  W_2(\mu, \mu_c)$.

    
\end{corollary}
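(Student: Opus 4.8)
The plan is to reduce both inequalities to the single Loewner-order fact already established for transport duals --- namely that $(\mu,\nu)\in D(\id)$ forces $\bS_\nu\geq\bS_\mu^{-1}$, by the Olkin--Pukelsheim semidefiniteness condition (Corollary~\ref{olkin_pukelsheim_semd_condition}), as recorded in the preceding theorem --- together with the identity $W^2_2(\mu,(\pi_{{\bf x}^{\perp}})_{\#}\mu)={\bf x}^t\bS_\mu{\bf x}$ from Proposition~\ref{Was_Hyperplane_Frame} and the equality $\bS_{\mu_c}=\bS_\mu^{-1}$ defining the canonical dual.

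For the first inequality I would argue directly. If $\nu$ is a transport dual of $\mu$, then $\bS_\nu-\bS_{\mu_c}=\bS_\nu-\bS_\mu^{-1}\in{\mathbb S}^n_+$, so ${\bf x}^t\bS_\nu{\bf x}\geq{\bf x}^t\bS_{\mu_c}{\bf x}$ for every ${\bf x}\in S^{n-1}$. Taking square roots and applying Proposition~\ref{Was_Hyperplane_Frame} to both $\nu$ and $\mu_c$ yields
$$W_2(\nu,(\pi_{{\bf x}^{\perp}})_{\#}\nu)=({\bf x}^t\bS_\nu{\bf x})^{1/2}\geq({\bf x}^t\bS_{\mu_c}{\bf x})^{1/2}=W_2(\mu_c,(\pi_{{\bf x}^{\perp}})_{\#}\mu_c).$$

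For the second inequality I would first translate the hypothesis: $W_2(\mu,(\pi_{{\bf x}^{\perp}})_{\#}\mu)=({\bf x}^t\bS_\mu{\bf x})^{1/2}\leq1$ for all unit ${\bf x}$ is exactly $\bS_\mu\leq\id$, equivalently $\bS_\mu^{-1}\geq\id$. By the preceding theorem $W_2(\mu,\nu)\geq W_2(\cP_{\bS_\nu},\cP_{\bS_\mu})$, while its equality case gives $W_2(\mu,\mu_c)=W_2(\cP_{\bS_\mu^{-1}},\cP_{\bS_\mu})$; hence it suffices to show $W^2_2(\cP_{\bS_\mu},\cP_{\bS_\nu})\geq W^2_2(\cP_{\bS_\mu},\cP_{\bS_\mu^{-1}})$. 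Setting $\bQ:=\bS_\mu^{1/2}\bS_\nu\bS_\mu^{1/2}$, the relation $\bS_\nu\geq\bS_\mu^{-1}$ is equivalent to $\bQ\geq\id$, and from Gelbrich's formula (Proposition~\ref{prop_metric_equivalence}) together with $\tr\bS_\nu=\tr(\bS_\mu^{-1}\bQ)$ one computes
$$W^2_2(\cP_{\bS_\mu},\cP_{\bS_\nu})-W^2_2(\cP_{\bS_\mu},\cP_{\bS_\mu^{-1}})=\tr(\bS_\mu^{-1}(\bQ-\id))-2\tr(\bQ^{1/2}-\id).$$
Since $\bS_\mu^{-1}-\id\geq0$ and $\bQ-\id\geq0$, the trace of their product is nonnegative, so $\tr(\bS_\mu^{-1}(\bQ-\id))\geq\tr(\bQ-\id)$, and the right-hand side is therefore at least $\tr(\bQ-2\bQ^{1/2}+\id)=\tr((\bQ^{1/2}-\id)^2)\geq0$.

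The step I expect to be the main obstacle is this last one: passing from the Loewner inequality $\bS_\nu\geq\bS_\mu^{-1}$ to the distance inequality $W_2(\cP_{\bS_\mu},\cP_{\bS_\nu})\geq W_2(\cP_{\bS_\mu},\cP_{\bS_{\mu_c}})$ is \emph{not} a general monotonicity property of the Bures--Wasserstein metric, and it genuinely uses the extra hypothesis $\bS_\mu\leq\id$: this is precisely what makes $\bS_\mu^{-1}-\id$ positive semidefinite and lets the trace-of-a-product estimate go through. Everything else (substituting $\bQ$, expanding Gelbrich's trace formula, and the identity $\tr(\bQ-2\bQ^{1/2}+\id)=\tr((\bQ^{1/2}-\id)^2)$) is routine.
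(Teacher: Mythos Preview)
Your proof is correct. The first part matches the paper's argument exactly.

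For the second part you take a genuinely different route from the paper. The paper works directly with the coordinate-wise lower bound of Proposition~\ref{prop:wasserstein_upper_est_and_eq_cond}: choosing $\{{\bf e}_i\}$ to be an eigenbasis of $\bS_\mu$, the hypothesis $\bS_\mu\leq\id$ ensures that each summand
\[
W_2(\nu,(\pi_{{\bf e}_i^\perp})_\#\nu)-W_2(\mu,(\pi_{{\bf e}_i^\perp})_\#\mu)\ \geq\ W_2(\mu_c,(\pi_{{\bf e}_i^\perp})_\#\mu_c)-W_2(\mu,(\pi_{{\bf e}_i^\perp})_\#\mu)\ \geq 0,
\]
so one may square termwise; the right-hand sum equals $W_2^2(\mu,\mu_c)$ by the equality case of that proposition, since $\mu_c=(\bS_\mu^{-1})_\#\mu$ with $\bS_\mu^{-1}$ diagonal in this basis.

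Your approach instead reduces everything to the Gelbrich trace formula and proves the Bures--Wasserstein monotonicity statement $d_W(\bS_\mu,\bS_\nu)\geq d_W(\bS_\mu,\bS_\mu^{-1})$ under the two Loewner constraints $\bQ:=\bS_\mu^{1/2}\bS_\nu\bS_\mu^{1/2}\geq\id$ and $\bS_\mu^{-1}\geq\id$, via the clean identity
\[
d_W^2(\bS_\mu,\bS_\nu)-d_W^2(\bS_\mu,\bS_\mu^{-1})=\tr\bigl((\bS_\mu^{-1}-\id)(\bQ-\id)\bigr)+\tr\bigl((\bQ^{1/2}-\id)^2\bigr).
\]
This is a nice decomposition that isolates precisely where the hypothesis $\bS_\mu\leq\id$ is used (the first trace being nonnegative as a trace of a product of two positive semidefinite matrices). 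The paper's argument stays closer to the projection-distance theme of the article and avoids manipulating $\bQ$; yours is more matrix-analytic and makes the role of the hypothesis more transparent. Both are short and neither clearly dominates the other.
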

\begin{proof}
    For a (non canonical) transport dual of $\mu$ with frame-operator $\bS_{\nu}$ we necessarily have $\bS_{\nu}-\bS^{-1}_{\mu} \geq 0$, so that  
\[W^2_2(\nu, (\pi_{{\bf x}^{\perp}})_{\#} \nu)={\bf x }^t\bS_{\nu}{\bf x} \geq {\bf x }^t \bS^{-1}_{\mu} {\bf x} = W^2_2(\mu_c, (\pi_{{\bf x}^{\perp}})_{\#} \mu_c)\]
for all ${\bf x} \in S^{n-1}$. 
Furthermore we have 
\[  W_2(\nu, (\pi_{{\bf x}^{\perp}})_{\#} \nu) - W_2(\mu, (\pi_{{\bf x}^{\perp}})_{\#} \mu) \geq  W_2(\mu_c, (\pi_{{\bf x}^{\perp}})_{\#} \mu_c) -  W_2(\mu, (\pi_{{\bf x}^{\perp}})_{\#} \mu)  \]
for all ${\bf x} \in S^{n-1}$. 
If $W_2(\mu, (\pi_{{\bf x}^{\perp}})_{\#} \mu)=({\bf x}^t\bS_{\mu}{\bf x})^{1/2} \leq 1$
for all ${\bf x} \in S^{n-1}$, then $W_2(\mu_c, (\pi_{{\bf x}^{\perp}})_{\#} \mu_c)=({\bf x}^t\bS^{-1}_{\mu}{\bf x})^{1/2} \geq 1$
for all ${\bf x} \in S^{n-1}$, so that the right-hand side of the estimate is nonnegative. By estimate \ref{prop:est:was:gen_proj_rel} then   
$  W_2(\mu, \nu) \geq  W_2(\mu, \mu_c)$. Note that the right-hand of the above inequality equals $W_2(\mu_c, (\pi_{{\bf x}^{\perp}})_{\#} \mu_c)$ 
since $\mu_c$ is the push-forward of $\mu$ by $\bS^{-1} \in \bS^n_{++}$, so that inequality \ref{prop:est:was:gen_proj_rel} is an equality. 
\end{proof}



\subsection{Transport duals that arise by push-forward}\  
There is a standard construction of all duals to a given (finite) frame see \cite{christensen} Section 6.3. Page 159, which directly translates into the language
of probabilistic frames, see \cite{wickman2017duality}. More or less as in the finite case one shows that these are all transport duals obtained
by push-forward. Indeed, for a given probabilistic frame $\mu$ and ${\bf h} \in L^2(\R^n, \mu, \R^n):=\{{\bf f}=(f_1,...,f_n):\ \R^n \rightarrow \R^n:\ f_i \in L^2(\R^n, \mu)
\}$ define  
\begin{equation}\label{dual_push_forward} 
{\bf H}({\bf z}):= \bS^{-1}{\bf z} +{\bf h}({\bf z})-\int_{\R^n}\langle \bS^{-1}{\bf z},{\bf x} \rangle {\bf h}({\bf x})\ d\mu({\bf x}). 
\end{equation}
\begin{proposition}
All transport duals of $\mu \in \cP_2(\R^n)$ obtained by push-forward are given by ${\bf H}_{\#}\mu$ for some ${\bf h} \in L^2(\R^n, \mu, \R^n)$.   
\end{proposition}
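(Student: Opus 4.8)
The plan is to unwind the definition and reduce the claim to a short algebraic identity. Recall that a transport dual $\nu$ of the probabilistic frame $\mu$ (so $\bS=\bS_\mu$ is invertible) is \emph{obtained by push-forward} precisely when $\nu={\bf F}_{\#}\mu$ for some ${\bf F}\in L^2(\R^n,\mu,\R^n)$ whose graph coupling $(\id\times{\bf F})_{\#}\mu\in\Gamma(\mu,\nu)$ witnesses the dual condition of \eqref{cond:transport_dual}, i.e.
\begin{equation*}
\int_{\R^{2n}}{\bf x}\,{\bf y}^t\,d(\id\times{\bf F})_{\#}\mu=\int_{\R^n}{\bf x}\,{\bf F}({\bf x})^t\,d\mu({\bf x})=\id .
\end{equation*}
So it suffices to show that, for ${\bf F}\in L^2(\R^n,\mu,\R^n)$, the identity $\int_{\R^n}{\bf x}\,{\bf F}({\bf x})^t\,d\mu({\bf x})=\id$ holds if and only if ${\bf F}={\bf H}$ for some ${\bf h}\in L^2(\R^n,\mu,\R^n)$, with ${\bf H}$ as in \eqref{dual_push_forward}.

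The bookkeeping device is the matrix ${\bf M}_{\bf h}:=\int_{\R^n}{\bf h}({\bf x}){\bf x}^t\,d\mu({\bf x})$, whose entries are finite by Cauchy--Schwarz since $\mu\in\cP_2(\R^n)$ and ${\bf h}\in L^2$; with it \eqref{dual_push_forward} reads ${\bf H}({\bf z})=(\id-{\bf M}_{\bf h})\bS^{-1}{\bf z}+{\bf h}({\bf z})$, and ${\bf H}\in L^2(\R^n,\mu,\R^n)$ because the linear part sends the $L^2$ function ${\bf z}\mapsto{\bf z}$ to an $L^2$ function. For the ``if'' direction, using $\int{\bf x}{\bf x}^t\,d\mu=\bS$, the symmetry of $\bS$, and $\int{\bf x}\,{\bf h}({\bf x})^t\,d\mu=({\bf M}_{\bf h})^t$, one computes
\begin{equation*}
\int_{\R^n}{\bf x}\,{\bf H}({\bf x})^t\,d\mu=\bS\,\bS^{-1}(\id-{\bf M}_{\bf h})^t+({\bf M}_{\bf h})^t=\id ,
\end{equation*}
so ${\bf H}_{\#}\mu$ is a transport dual obtained by push-forward. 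For the ``only if'' direction, given ${\bf F}$ with $\int{\bf x}\,{\bf F}({\bf x})^t\,d\mu=\id$, set ${\bf h}({\bf z}):={\bf F}({\bf z})-\bS^{-1}{\bf z}$, which lies in $L^2$; then ${\bf M}_{\bf h}=\big(\int{\bf x}\,{\bf F}({\bf x})^t\,d\mu\big)^t-\bS^{-1}\bS=\id-\id=0$, and substituting this into \eqref{dual_push_forward} gives ${\bf H}({\bf z})=\bS^{-1}{\bf z}+{\bf h}({\bf z})={\bf F}({\bf z})$, hence ${\bf F}_{\#}\mu={\bf H}_{\#}\mu$.

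I do not expect a genuine obstacle: after the definition of ``dual obtained by push-forward'' is pinned down, the argument is linear algebra together with the moment estimates that make ${\bf M}_{\bf h}$ and ${\bf H}$ legitimate objects in $L^2(\R^n,\mu,\R^n)$. The one point deserving care is exactly that translation step --- being explicit that for a push-forward dual the witnessing coupling is the graph of the map, so that the transport-dual condition of \eqref{cond:transport_dual} becomes the cross-covariance identity $\int{\bf x}\,{\bf F}({\bf x})^t\,d\mu=\id$ --- and checking that $\mu\in\cP_2(\R^n)$ indeed makes ${\bf z}\mapsto\bS^{-1}{\bf z}$ square-integrable and ${\bf M}_{\bf h}$ finite. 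I would also note in passing that the representative ${\bf h}$ is not unique: it is determined only up to the relation ${\bf h}'({\bf z})-{\bf M}_{{\bf h}'}\bS^{-1}{\bf z}={\bf h}({\bf z})-{\bf M}_{\bf h}\bS^{-1}{\bf z}$, and the choice ${\bf h}={\bf F}-\bS^{-1}(\cdot)$ produced above is simply the canonical one with ${\bf M}_{\bf h}=0$.
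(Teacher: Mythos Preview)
Your proof is correct and follows essentially the same route as the paper: reduce the transport-dual condition for a graph coupling to the cross-covariance identity $\int {\bf x}\,{\bf F}({\bf x})^t\,d\mu=\id$, then verify both directions by an elementary computation. The only cosmetic difference is in the ``only if'' direction: the paper takes the push-forward map itself as the parameter ${\bf h}$ (so the integral term in \eqref{dual_push_forward} equals $\bS^{-1}{\bf z}$ and cancels), whereas you subtract $\bS^{-1}{\bf z}$ first to force ${\bf M}_{\bf h}=0$---either choice works.
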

\begin{proof}
The assumption ${\bf h} \in L^2(\R^n,\mu, \R^n)$ is necessary to have ${\bf h}_{\#}\mu\in \cP_2(\R^n)$ for given $\mu\in \cP_2(\R^n)$.
A standard verification shows that the push-forward ${\bf H}_{\#}\mu$ determines a transport dual and hence $\mu$ and ${\bf H}_{\#}\mu$ is a pair
of transport duals. Now, if push-forward of $\mu$ with  ${\bf h} \in L^2(\R^n,\mu, \R^n)$ defines a transport dual then  
\[\id = \int_{\R^{2n}} {\bf x}{\bf y}^t \ d ((\id \times {\bf h} )_{\#}\mu)({\bf x}, {\bf y}) =\int_{\R^n} {\bf x}{\bf h}({\bf x})^t \ d\mu({\bf x})=
(\int_{\R^n} {\bf h}({\bf x}) {\bf x}^t \ d\mu({\bf x}))^t, \] 
so that 
$$\bS^{-1}{\bf z} = \int_{\R^n} {\bf h}({\bf x}) {\bf x}^t \ d\mu({\bf x}) \cdot \bS^{-1}{\bf z}=\int_{\R^n}\langle \bS^{-1}{\bf z},{\bf x} \rangle {\bf h}({\bf x})\ d\mu({\bf x}).$$
The last identity implies ${\bf H} = {\bf h}$ and that shows the claim. 
  \end{proof}

\subsection{ Convexity properties of couplings and dual couplings} The following proposition is of
independent interest. It will allow us to construct transport duals using convex combinations.    
\begin{proposition}\label{convexity_dual}
The set of $\bM$-dual pairs with, or without, fixed first marginal is convex.  In particular $D_{\mu}(\bM)$ is a convex set.  
\end{proposition}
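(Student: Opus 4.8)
The plan is to show that convex combinations of $\bM$-dual pairs are again $\bM$-dual pairs by taking convex combinations of the witnessing couplings. First I would fix $\bM \in {\rm GL}_n(\R)$ and take two $\bM$-dual pairs $(\mu_0,\nu_0), (\mu_1,\nu_1) \in D(\bM)$, with witnessing couplings $\gamma_0 \in \Gamma(\mu_0,\nu_0)$ and $\gamma_1 \in \Gamma(\mu_1,\nu_1)$ whose frame operators have the prescribed block form \eqref{M_frame_matrix}. For $t \in [0,1]$ set $\gamma_t := (1-t)\gamma_0 + t\gamma_1$; this is a probability measure on $\R^{2n}$, it lies in $\cP_2(\R^{2n})$ since both pieces do, and its marginals are $\mu_t := (1-t)\mu_0 + t\mu_1$ and $\nu_t := (1-t)\nu_0 + t\nu_1$, so $\gamma_t \in \Gamma(\mu_t,\nu_t)$.

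The key step is that the frame operator is linear in the measure: $\bS_{(1-t)\gamma_0 + t\gamma_1} = (1-t)\bS_{\gamma_0} + t\bS_{\gamma_1}$, which is immediate from $\bS_{\gamma} = \int ({\bf x},{\bf y})({\bf x},{\bf y})^t\, d\gamma$ and linearity of the integral. Applying this blockwise, the off-diagonal block of $\bS_{\gamma_t}$ is $(1-t)\bM + t\bM = \bM$, the upper-left block is $(1-t)\bS_{\mu_0} + t\bS_{\mu_1} = \bS_{\mu_t}$ (again by linearity of $\mu \mapsto \bS_\mu$), and likewise the lower-right block is $\bS_{\nu_t}$. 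Hence $\bS_{\gamma_t}$ has exactly the form \eqref{M_frame_matrix} required for $(\mu_t,\nu_t)$, so $(\mu_t,\nu_t) \in D(\bM)$. This proves convexity of $D(\bM)$. For the fixed-marginal statement, if in addition $\mu_0 = \mu_1 = \mu$ then $\mu_t = \mu$ for all $t$, so the convex combination stays in $D_\mu(\bM)$; thus $D_\mu(\bM)$ is convex as well.

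There is essentially no obstacle here: the only things being used are that couplings form a convex set with the obvious affine marginal maps, that $\cP_2$ is closed under convex combinations, and that both $\mu \mapsto \bS_\mu$ and $\gamma \mapsto \bS_\gamma$ are affine (indeed linear) in the measure. The mild point worth stating explicitly is that one must check $\gamma_t$ still has finite second moment and the correct marginals before reading off its frame operator, but both are routine. I would present the argument in the order above: introduce the two pairs and their couplings, form the convex combination of couplings, verify marginals and second moments, then invoke linearity of the frame-operator map to conclude the block form is preserved.
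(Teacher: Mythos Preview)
Your proposal is correct and follows essentially the same approach as the paper: form the convex combination $\gamma_t=(1-t)\gamma_0+t\gamma_1$ of the witnessing couplings, observe it is a coupling of $\mu_t$ and $\nu_t$, and use linearity of the integral to see that the off-diagonal block stays equal to $\bM$ while the diagonal blocks become $\bS_{\mu_t}$ and $\bS_{\nu_t}$. The only difference is cosmetic: you spell out the second-moment and marginal checks a bit more explicitly than the paper does.
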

\begin{proof}
If couplings 
$\gamma_0 \in \Gamma(\mu_0,\nu_0)$ and $\gamma_1 \in \Gamma(\mu_1,\nu_1)$ are given, 
then $\gamma_t := (1-t)\gamma_0 +t\gamma_1 \in \Gamma(\mu_t,\nu_t)$
is a coupling between $\mu_t:=(1-t)\mu_0+t\mu_1$ and $\nu_t:=(1-t)\nu_0+t\nu_1$ for any $t \in [0,1]$. 
If both couplings are $\bM$-couplings then so is their convex combination: 
\[ \begin{split}\int_{ \R^{2n} } {\bf x}{\bf y}^t\ d\gamma_t({\bf x},{\bf y})=&(1-t)\int_{ \R^{2n} } {\bf x}{\bf y}^t\ d\gamma_0({\bf x},{\bf y})+t\int_{ \R^{2n} } {\bf x}{\bf y}^t\ d\gamma_1({\bf x},{\bf y})\\ =& (1-t)\bM+t\bM =\bM 
\end{split}.
\]
In terms of frame matrices   
\[   {\bf S}_{\gamma_t}=(1-t)\begin{bmatrix}
\bS_{\mu_0} & \bM \\
 \bM & \bS_{\nu_0}
\end{bmatrix} + 
t \begin{bmatrix}
\bS_{\mu_1} & \bM \\
 \bM & \bS_{\nu_1}
\end{bmatrix}
=\begin{bmatrix}
\bS_{\mu_t} & \bM \\
 \bM & \bS_{\nu_t}
\end{bmatrix}.
\]
Clearly the argument for fixed first marginal follows by putting $\mu_0=\mu_1$. 
That implies the convexity of $D_{\mu}$. 
\end{proof}

\subsection{Transport duals that do not arise by push-forward}  
Transport duals of probabilistic frames show phenomena different from duals of finite frames, essentially because couplings may split mass. 
For that reason, an inclusive description of transport duals cannot be achieved without considering mass splitting indirectly or directly. 
Here is an example.\medskip 

\noindent{\bf Example 1.} Suppose $\mu$ is a probabilistic frame on the line with frame operator $\bS_{\mu}=\lambda >0 $. Then, applying Equation \ref{dual_push_forward} in the one dimensional setting with $h(x)=\alpha$, i.e. pushing forward all mass to one point $\alpha \in \R$, we get the one parameter family of maps
\begin{equation}\label{dual_push_forw}
    {H}_{\alpha}(x)=\lambda^{-1}x + \alpha - \alpha \lambda^{-1} \int_{\R} xy\ d \mu(y)=\lambda^{-1}(1-\alpha m_{\mu})x + \alpha,  
\end{equation}
where $m_{\mu}$ denotes the center of mass of $\mu$. If push-forward with $\alpha$ is a transport dual and  
$m_{\mu} \neq 0$, then $\alpha=H_{m_{\mu}^{-1}}(x)=m_{\mu}^{-1}$. Hence $(H_{m_{\mu}^{-1}})_{\#} \mu = \delta_{m_{\mu}^{-1}}$. 
Since the coupling $\gamma:=(\text{id} \times H_{m_{\mu}^{-1}})_{\#}\mu \in \Gamma(\mu, \delta_{m_{\mu}^{-1}})$
is a dual coupling, by symmetry $\mu$ is a transport dual of $\delta_{m_{\mu}^{-1}}$ that is generally not a push-forward, since
mass is split. In fact, all probabilities $\mu$ with bounded second moments and center of mass $m_{\mu} \neq 0$ have $\delta_{m_{\mu}^{-1}}$ 
as transport dual.  
To summarize: 
\begin{proposition} \label{one_dim_transport_duals}
The set of transport duals of a delta mass $\delta_{a}$, $a\in \R \backslash \{0\}$, consists of all probabilistic frames $\mu \in \cP_2(\R)$ with
center of mass $m_{\mu}=a^{-1}$. 
A transport dual of $\delta_{a}$ is a push-forward, if and only if it is the canonical dual.  
\end{proposition}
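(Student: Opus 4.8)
The plan is to reduce the transport-dual condition for $\delta_a$ to a single scalar equation by observing that a coupling whose first marginal is a Dirac mass must be a product measure, and then to use that the push-forward of a Dirac is again a Dirac.

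\emph{Characterization of the dual set.} First I would take a transport dual $\nu\in\cP_2(\R)$ of $\delta_a$ together with a dual coupling $\gamma\in\Gamma(\delta_a,\nu)$, so that $\int_{\R^2}xy\,d\gamma(x,y)=1$. Since the first marginal of $\gamma$ is concentrated at $a$, the measure $\gamma$ is supported on $\{a\}\times\R$, hence $\gamma=\delta_a\otimes\nu$. Substituting gives $a\int_\R y\,d\nu(y)=a\,m_\nu=1$, i.e. $m_\nu=a^{-1}$; because $a^{-1}\neq0$ we have $\nu\neq\delta_0$, so $\bS_\nu=\int_\R y^2\,d\nu>0$ and $\nu$ is a probabilistic frame. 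Conversely, for any probabilistic frame $\mu\in\cP_2(\R)$ with $m_\mu=a^{-1}$ the product $\gamma:=\delta_a\otimes\mu\in\Gamma(\delta_a,\mu)$ satisfies $\int_{\R^2}xy\,d\gamma=a\,m_\mu=1$, so $\mu$ is a transport dual of $\delta_a$. This proves the first assertion (and shows, incidentally, that the frame hypothesis is automatic once $m_\mu=a^{-1}\neq0$).

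\emph{Push-forward duals.} For the second assertion I would argue both directions. The canonical dual of $\delta_a$ is $\mu_c=(\bS_{\delta_a}^{-1})_{\#}\delta_a=(a^{-2})_{\#}\delta_a=\delta_{a^{-1}}$, which is a push-forward of $\delta_a$ by construction. Conversely, the push-forward of $\delta_a$ by any Borel map ${\bf H}$ is the Dirac $\delta_{{\bf H}(a)}$; so if a transport dual $\nu$ of $\delta_a$ has the form $\nu={\bf H}_{\#}\delta_a$, then $\nu=\delta_{{\bf H}(a)}$, and by the first part ${\bf H}(a)=m_\nu=a^{-1}$, whence $\nu=\delta_{a^{-1}}=\mu_c$. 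Combining, a transport dual of $\delta_a$ is a push-forward precisely when it is the canonical dual.

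I do not expect a genuine obstacle here: the whole argument rests on the two elementary facts that a coupling with Dirac first marginal is a product and that push-forwards of Diracs are Diracs. As a consistency check one may substitute $\mu=\delta_a$ into the general dual-by-push-forward formula \ref{dual_push_forward}: a one-line computation gives ${\bf H}(a)=a^{-1}$ for every ${\bf h}\in L^2(\R,\delta_a,\R)$, so every transport dual of $\delta_a$ obtained by push-forward equals $\delta_{a^{-1}}$, in agreement with the conclusion above.
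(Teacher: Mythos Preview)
Your proof is correct and, in fact, more self-contained than the paper's own treatment. The paper relies on the discussion in Example~1 together with the general push-forward dual formula~\eqref{dual_push_forward}: it uses that $(H_{m_\mu^{-1}})_\#\mu=\delta_a$ provides the dual coupling when $m_\mu=a^{-1}$, and leaves the necessity direction (a transport dual of $\delta_a$ must have mean $a^{-1}$) to the reader as implicit in the preceding discussion. By contrast, you obtain both directions directly from the single observation that any coupling with Dirac first marginal is the product measure, which immediately converts the transport-dual condition into the scalar equation $a\,m_\nu=1$. For the push-forward part, you bypass formula~\eqref{dual_push_forward} entirely by noting that ${\bf H}_\#\delta_a$ is always a Dirac, so the only candidate is $\delta_{a^{-1}}=\mu_c$; your consistency check against~\eqref{dual_push_forward} is correct but not needed for the argument itself. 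Your route is shorter and uses no machinery beyond the definition, while the paper's route illustrates how the general push-forward description specializes here.
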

\begin{proof}
Because of the previous statements, all that remains to be shown is that $\delta_{a}$  is a transport dual of $\mu$ when $m_{\mu}=a^{-1}$. 
By Equation \ref{dual_push_forward} we have $(H_{m^{-1}_{\mu}})_{\#}\mu=\delta_{a}$ and in that case $\delta_{a} \in D_{\mu}$.  
\end{proof}
\noindent Moreover, by using convex combinations of measures, it is easy to construct transport dual pairs where neither is the push-forward of
the other. Indeed, consider two non-canonical dual pairs, say $(\mu, \delta_{\alpha})$ and $(\nu, \delta_{\beta})$.   
Then the probabilities $\widetilde{\mu} = \frac{1}{2}(\mu + \delta_{\beta})$ and $\widetilde{\nu} = \frac{1}{2}(\nu + \delta_{\alpha})$ 
define a dual pair $(\widetilde{\mu},\widetilde{\nu})$ by convexity. That pair does not arise as push-forward in either direction. \\


\begin{corollary}
A point mass in $\R \backslash \{0\}$ and its canonical dual minimize the 2-Wasserstein distance
between the point mass and its transport duals. 
\end{corollary}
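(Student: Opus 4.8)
The plan is to use that Wasserstein distance from a Dirac mass is nothing but a second-moment integral, together with the fact that among probability measures with a prescribed mean the second moment is minimized by the Dirac at that mean.

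First I would make the canonical dual explicit. Since $\bS_{\delta_{a}}=a^{2}$, the canonical dual of $\delta_{a}$ is $\mu_{c}=(\bS^{-1})_{\#}\delta_{a}=(a^{-2})_{\#}\delta_{a}=\delta_{a^{-1}}$, which is a probabilistic frame in $\cP_{2}(\R)$ because $a^{-1}\neq 0$; in particular $\delta_{a^{-1}}$ is itself a transport dual of $\delta_{a}$, hence a legitimate competitor in the minimization. By Proposition \ref{one_dim_transport_duals}, the transport duals of $\delta_{a}$ are precisely the probabilistic frames $\mu\in\cP_{2}(\R)$ with center of mass $m_{\mu}=a^{-1}$, and this covers all transport duals, including those that do not arise as push-forwards.

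Next, for any such $\mu$ the only element of $\Gamma(\delta_{a},\mu)$ is the product $\delta_{a}\otimes\mu$, so
\[
W_{2}^{2}(\delta_{a},\mu)=\int_{\R}|a-y|^{2}\,d\mu(y)=a^{2}-2a\,m_{\mu}+M_{2}(\mu)=a^{2}-2+M_{2}(\mu),
\]
using $m_{\mu}=a^{-1}$. Writing $M_{2}(\mu)=m_{\mu}^{2}+\int_{\R}(y-m_{\mu})^{2}\,d\mu(y)\geq m_{\mu}^{2}=a^{-2}$ gives
\[
W_{2}^{2}(\delta_{a},\mu)\geq a^{2}-2+a^{-2}=(a-a^{-1})^{2}=W_{2}^{2}(\delta_{a},\delta_{a^{-1}}),
\]
with equality exactly when $\mu$ has zero variance, i.e. $\mu=\delta_{m_{\mu}}=\delta_{a^{-1}}=\mu_{c}$. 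This proves that $\delta_{a}$ and its canonical dual minimize the $2$-Wasserstein distance over transport duals, and that the minimizer is unique.

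There is no serious obstacle; the only points requiring care are verifying that $\delta_{a^{-1}}$ is genuinely a probabilistic frame (so it lies in the set over which one minimizes) and that the minimization ranges over all transport duals — both handled by Proposition \ref{one_dim_transport_duals}. It is worth noting that the general lower estimate $W_{2}(\nu,\mu)\geq W_{2}(\cP_{\bS_{\nu}},\cP_{\bS_{\mu}})$ established earlier does not by itself yield the claim for every $a$: when $|a|>1$ there are transport duals $\nu$ of $\delta_{a}$ with $\bS_{\nu}=a^{2}=\bS_{\delta_{a}}$, for which that estimate degenerates to $W_{2}(\nu,\delta_{a})\geq 0$, so the direct Dirac computation above is what makes the argument go through.
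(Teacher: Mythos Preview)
Your proof is correct and follows essentially the same route as the paper: invoke Proposition \ref{one_dim_transport_duals} to identify the transport duals of $\delta_a$ as the measures in $\cP_2(\R)$ with mean $a^{-1}$, compute $W_2^2(\delta_a,\mu)=\int (y-a)^2\,d\mu$ via the unique coupling, and then bound the second moment below by the square of the mean (the paper writes this as $\int (x-a^{-1})^2\,d\mu\ge 0$, you as the variance decomposition). Your version additionally makes the canonical dual $\delta_{a^{-1}}$ explicit and records uniqueness of the minimizer, which the paper's proof implies but does not state.
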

\begin{proof}
By Proposition \ref{one_dim_transport_duals}, the duals of $\delta_a$, $a \in \R\backslash \{0\}$, are the probabilities in $\cP_2(\R)$ with center of mass $a^{-1} \in \R\backslash \{0\}$. Let $\mu \in \cP_2(\R)$ be a transport dual to $\delta_{a}$. Since transport to a point is a push-forward we have $W^2_2(\delta_{a}, \mu)= \int (x-a)^2 \ d\mu$. 
Breaking this representation into terms we see, that the 2-Wasserstein distance between $\delta_a$ and $\mu$ depends only on the second moment $
\int x^2 \ d\mu$ of $\mu$. On the other hand  we have $\int (x - a^{-1})^2\ d \mu \geq 0$, hence $
\int x^2 \ d\mu \geq a^{-2} = (\int_{\R} x \ d \delta_{a^{-1}})^2$  so that $W^2_2(\delta_{a}, \mu) \geq \int (a^{-1}-a)^2 \ d\mu=W^2_2(\delta_{a}, \delta_{a^{-1}})$. 
\end{proof}
We can have as well duals to a point mass with any given  
\begin{corollary}
Given $a \in (0,1]$, then there exists a transport dual to $\delta_{a}$ with second moment (or frame operator) $\lambda$ for any $\lambda \geq a^{-2}$.      
\end{corollary}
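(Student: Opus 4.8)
The plan is to reduce the statement to Proposition \ref{one_dim_transport_duals}, which characterizes the transport duals of $\delta_a$ as precisely the probabilistic frames $\mu \in \cP_2(\R)$ with center of mass $m_\mu = a^{-1}$. Under this identification the frame operator of a one–dimensional measure is just its second moment, so it suffices to produce, for each $\lambda \ge a^{-2}$, a probabilistic frame $\mu$ on $\R$ with $m_\mu = a^{-1}$ and $\int_\R x^2\, d\mu = \lambda$. The only constraint linking these two requirements is that the variance $\lambda - m_\mu^2 = \lambda - a^{-2}$ be nonnegative, which is exactly the hypothesis on $\lambda$; so a symmetric two–point measure centered at $a^{-1}$ will do the job.

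Concretely, I would put $t := \sqrt{\lambda - a^{-2}} \ge 0$ and define
$$\mu := \tfrac12\,\delta_{a^{-1}-t} + \tfrac12\,\delta_{a^{-1}+t} \in \cP_2(\R).$$
A direct computation gives $m_\mu = a^{-1}$ and $\bS_\mu = \int_\R x^2\, d\mu = a^{-2} + t^2 = \lambda$. Moreover $\supp\ \mu$ spans $\R$: if $t>0$ the two atoms are distinct, so at least one of them is nonzero, while if $t=0$ then $\mu = \delta_{a^{-1}}$ with $a^{-1}\neq 0$; hence $\mu$ is a probabilistic frame in either case. By Proposition \ref{one_dim_transport_duals}, $\mu$ is therefore a transport dual of $\delta_a$, and it has the prescribed frame operator $\lambda$. (For $\lambda = a^{-2}$ one recovers the canonical dual $\mu_c = (\bS^{-1})_{\#}\delta_a = \delta_{a^{-1}}$.)

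I expect no real obstacle here once Proposition \ref{one_dim_transport_duals} is in hand: matching a prescribed mean and second moment by a finite discrete measure is elementary. The only points worth a word of care are verifying that the constructed measure genuinely spans the line (which, as noted, holds in all cases, including the degenerate $t=0$), and observing that the lower bound $\lambda \ge a^{-2}$ cannot be relaxed, since by the preceding Corollary every transport dual of $\delta_a$ has second moment at least $a^{-2}$; thus the range of attainable frame operators is exactly $[a^{-2},\infty)$.
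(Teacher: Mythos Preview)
Your proof is correct and follows essentially the same approach as the paper: both reduce to Proposition \ref{one_dim_transport_duals} and then exhibit an explicit two-atom measure with mean $a^{-1}$ and second moment $\lambda$. The paper uses $\mu_{\lambda,a}=\frac{a^2\lambda-1}{a^2\lambda}\delta_0+\frac{1}{a^2\lambda}\delta_{a\lambda}$ instead of your symmetric choice $\tfrac12\delta_{a^{-1}-t}+\tfrac12\delta_{a^{-1}+t}$, but this is a cosmetic difference.
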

\begin{proof}
By the previous proposition any probability with finite second moment centered at $a^{-1}$ is a transport dual of $\delta_a$. Using $\lambda \geq a^{-2}$ we 
see that  $\mu_{\lambda, a}:=\frac{a^2\lambda-1}{a^2\lambda}\delta_{0}+\frac{1}{a^2\lambda}\delta_{a\lambda}$ is a probability measure. 
The center of $\mu_{\lambda, a}$ is $a^{-1}$ and the second moment is $\frac{1}{a^2\lambda} \cdot a^2\lambda^2=\lambda$ as desired. 
\end{proof}

\noindent Convexity together with compactness of the set of transport duals would imply a classification of transport duals by Krein-Milman. 
Compactness unfortunately is not true:  
\begin{corollary} \label{non-compactness}
If $\mu \in \cP_2(\R)$ is a centered and compactly supported frame, then $D_{\mu}$ is not compact in the 2-Wasserstein topology.      
\end{corollary}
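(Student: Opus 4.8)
The plan is to exhibit an explicit one-parameter family of transport duals of $\mu$ whose second moments are unbounded; since a compact subset of the metric space $(\cP_2(\R),W_2)$ must be bounded, unboundedness already settles non-compactness.

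First I would set $\lambda:=\bS_\mu$, which is positive because $\mu$ is a frame on the line. By the proposition characterizing push-forward duals, for every $h\in L^2(\R,\mu)$ the map $H$ of Equation \ref{dual_push_forward} yields a transport dual $H_\#\mu\in D_\mu$. I would specialize to the constant function $h\equiv\alpha$. Since $\mu$ is centered, i.e. $\int_\R x\,d\mu(x)=0$, the correction integral in Equation \ref{dual_push_forward} vanishes and $H$ becomes the affine map $H_\alpha(x)=\lambda^{-1}x+\alpha$ (this is exactly the one-dimensional formula \ref{dual_push_forw} with $m_\mu=0$). Because $\mu$ is compactly supported it lies in $\cP_2(\R)$, so $\nu_\alpha:=(H_\alpha)_\#\mu\in\cP_2(\R)$, and $\nu_\alpha\in D_\mu$ for every $\alpha\in\R$.

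Next I would compute, using $\int x\,d\mu=0$ and $\int x^2\,d\mu=\bS_\mu=\lambda$,
\[
M_2(\nu_\alpha)=\int_\R(\lambda^{-1}x+\alpha)^2\,d\mu(x)=\lambda^{-2}\int x^2\,d\mu+2\alpha\lambda^{-1}\int x\,d\mu+\alpha^2=\lambda^{-1}+\alpha^2 .
\]
Consequently $W_2(\nu_\alpha,\delta_{{\bf 0}})^2=M_2(\nu_\alpha)=\lambda^{-1}+\alpha^2$, so the subset $\{\nu_\alpha:\alpha\in\R\}\subseteq D_\mu$ is unbounded in $W_2$. Taking $\alpha=k\in\N$ produces a sequence $(\nu_k)_k$ in $D_\mu$ with $W_2(\nu_k,\delta_{{\bf 0}})\to\infty$; no subsequence can converge in $W_2$, since a limit would be at finite $W_2$-distance from $\delta_{{\bf 0}}$. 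Hence $D_\mu$ is not sequentially compact, and being a subset of a metric space it is not compact.

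I do not anticipate any real obstacle: the argument is a short explicit computation resting on the push-forward description of duals. The only points to keep in mind are that one must invoke that description to legitimize $\nu_\alpha\in D_\mu$, and that it is the \emph{centering} hypothesis (rather than compact support as such) that makes the correction term disappear and leaves the clean affine family $H_\alpha$; compact support enters only to ensure $\mu\in\cP_2(\R)$, so the statement in fact holds for any centered $\mu\in\cP_2(\R)$ that is a frame.
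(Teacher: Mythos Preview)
Your proof is correct and follows essentially the same approach as the paper: both construct the affine family $H_\alpha(x)=\lambda^{-1}x+\alpha$ of push-forward transport duals (available because $m_\mu=0$) and observe this family is unbounded in $W_2$. The paper reaches unboundedness via the divergence of the centers $m_{(H_\alpha)_\#\mu}=\alpha$ (using compact support to arrange mutually disjoint supports along the way), while you compute the second moments $M_2(\nu_\alpha)=\lambda^{-1}+\alpha^2$ directly; your observation that compact support is not actually needed---only $\mu\in\cP_2(\R)$ and $m_\mu=0$---is a valid sharpening.
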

\begin{proof}
By assumption $m_{\mu}=0$, so that by Equation \ref{dual_push_forw} any push-forward of $\mu$ with $H_{\alpha}(x)=\lambda^{-1}x + \alpha$ is a transport dual. Recall $0< \lambda=\bS_{\mu}$ because $\mu$ is a frame.  Since the support of $\mu$ is compact, we can find a positive monotonically increasing sequence $\{\alpha_i\}_{i\in \N} \rightarrow \infty$,  
so that the measures $(H_{\alpha_i})_{\#}\mu$ have mutually disjoint support. By construction, the sequence $\{m_{(H_{\alpha_i})_{\#}\mu}\}$ of centers diverges, hence the sequence of transport duals $\{(H_{\alpha_i})_{\#}\mu\}$ diverges in $W_2$.  
\end{proof}

\noindent{\bf Example 2} (Non-canonical duals from convex combinations). Recall, a standard way to construct a non-canonical dual to an overcomplete finite frame, say $F \subset \R^n$ is to take the canonical dual $\widetilde{V}$ of a sub-frame, say $V \subset F$,  extending it by $0$ on the 
remaining vectors of $F$. 
One can extend this construction to the setting of probabilistic frames, by decomposing a probabilistic frame into 
a sub-frame for which one could take the canonical dual and a complementary mass that will be moved to the origin. 
Direct transfer of the finite frame construction of non-canonical duals would change the total mass of a probabilistic dual. 
Requiring the dual to be a probability requires a small change of the construction.   
Let us consider a one dimensional example. As earlier, we take a delta mass $\delta_a$ located at $a \in \R\backslash \{ 0 \}$. 
We split that mass into $\delta_a=\lambda \delta_a+ (1-\lambda)\delta_a$ for some $\lambda \in (0,1)$. 
Then if the dual of $\lambda \delta_a$  is a push-forward, it must be $\lambda \delta_{(\lambda a)^{-1}}$ and extended by the push-forward 
of $(1-\lambda)\delta_a$ to $(1-\lambda)\delta_0$ we obtain $m_{\lambda}=\lambda \delta_{(\lambda a)^{-1}} + (1-\lambda)\delta_0 $.  
In fact, the coupling $\gamma=\lambda(x,\lambda^{-1}x^{-1})_{\#}\delta_a + (1-\lambda)(x,0)_{\#}\delta_a$ with marginals $m_{\lambda}$ and $\delta_a$ 
shows that the marginals are a dual pair for any $\lambda \in (0,1)$. In particular, the family of duals $\{m_{\lambda}:\ \lambda \in (0,1)\}$  
has  $\delta_0$ as a weak-star limit point, so that the set of transport duals is generally not weakly closed. Note, that 
the second moments of $m_{\lambda}$ diverge as $\lambda \rightarrow 0$, hence the convergence to $\delta_0$ does not hold 
with respect to the Wasserstein metric.

\subsection{Transport duals by convex combinations}

\noindent Below we describe dual couplings using generalized convex combinations, those are probability measures on the space of couplings. 
Recall that the frame map $\bS: \gamma \mapsto \bS_{\gamma}$ is continuous in $\gamma$ and so are the maps decomposing
the frame operator further into four $n\times n$ matrices $\bU: \gamma \mapsto \bU_{\gamma}$, $\bL: \gamma \mapsto \bL_{\gamma}$ (upper and lower diagonal) and $\bM: \gamma \mapsto \bM_{\gamma}$ (the off-diagonal), the fourth map being $\bM^t$ is determined by $\bM$ and clearly continuous.   
Below $M_n(\R)$ denotes the set of real valued $n\times n$ matrices. 
More precisely, let $\cP_c=\cP_c(\R^{2n})$ denote the set of couplings on $\R^{2n}$ with marginals in $\cP_2(\R^n)$ and  
$\xi$ be a probability on $\cP_c$, so that
\begin{enumerate}
    \item $\int_{M_n(\R)}\bA\ d (\bU)_{\#}\xi(\bA)=\int_{\cP_c}\bU_{\gamma}\ d \xi(\gamma) = \bS_{\mu}$
    \item $\int_{M_n(\R)}\bA\ d (\bL)_{\#}\xi(\bA)=\int_{\cP_c}\bL_{\gamma}\ d \xi(\gamma) = \bS_{\nu}$ 
    \item $\int_{\cP_c}(\int_{\R^n} {\bf x} {\bf y}^t  \ d \gamma({\bf x}, {\bf y}) )\  d \xi(\gamma)=\bM$.
\end{enumerate}
Here $\bM \in M_n(\R)$ is any given off-diagonal matrix, so that the frame operator $\bS_{\xi}=\int_{\cP_c}\bS_{\gamma}\ d \xi(\gamma)$ 
of $\xi$ is positive definite. Then $\xi$ defines the $\bM$-dual pair 
\[ (\mu_{\xi}, \nu_{\xi})=((\pr_{{\bf x}})_{\#} \int_{\cP_c} d  \xi(\gamma),(\pr_{{\bf y}})_{\#} \int_{\cP_c} d  \xi(\gamma) ).
\]
For transport duals put $\bM=\id$.   
More background on the space of measures can be found in \cite{einsiedler}. 
Clearly any transport dual defines a probability on the space of couplings, just take the delta measure on the particular coupling. 
On the other hand property (1) and (2) imply that $\xi$ is a coupling between $\mu$ and $\nu$, that is a transport dual when (3) holds  
for $\bM=\id$. One question is, if all ${\bf M}$-duals can be represented by measures on the space of couplings and if this is a practical 
representation for applications.


\section{Concluding Questions and Remarks}
The transport duals and hence the ${\bf M}$-duals are not yet fully understood. It seems that there are always 
non-frames in the closure of the set of dual frames, so the question arises if one can convex compose every transport dual
from lower-dimensional distributions. We did not restrict our exposition to absolutely continuous measures at
any point, since this would be an obstruction to making conclusions about discrete measures, i.e. standard frames. In this direction it 
would be beneficial, to show fiber-connectedness for finite frames of fixed cardinality. 
Furthermore, it would be important to study the case of infinite-dimensional Hilbert spaces.





\bibliographystyle{plain} 
\bibliography{refs}

\end{document}